\let\th@plain=\undefined
\let\proofname=\undefined
\newcommand{\pref}[1]{(\ref{#1})}
\newcommand{\fullcref}[2]{\cref{#1}\pref{#1-#2}}
\newcommand{\csee}[1]{(see \cref{#1})}
\newcommand{\iso}{\cong}
\newcommand{\normal}{\triangleleft}
\newcommand{\notnormal}{\mathrel{\not\hskip-2pt\normal}}
\newcommand{\integer}{\mathbb{Z}}
\DeclareMathOperator{\Cay}{Cay}
\DeclareMathOperator{\GL}{GL}
\renewenvironment{proof}[1][\proofname]{
\vskip-\lastskip \par \goodbreak \vskip 6pt plus6pt  
  \th@nonumberplain
  \normalfont
  \theoremsymbol{\ensuremath{_\blacksquare}}
  \@thm{proof}{proof}{#1}}%
  {\@endtheorem}
\medbreak\item[\hskip\labelsep \normalfont(##2) \theorem@headerfont ##1\@addpunct{.}]}%
\medbreak\item[\hskip\labelsep \normalfont(##2) \theorem@headerfont ##1 \normalfont ##3\@addpunct{.}]}
\medbreak\item[\hskip\labelsep \normalfont(##2) \theorem@headerfont ##1\@addpunct{.}]\normalfont}%
\medbreak\item[\hskip\labelsep \normalfont(##2) \theorem@headerfont ##1 \normalfont ##3\@addpunct{.}] \normalfont}
\medbreak\item[\hskip\labelsep \normalfont(##2) \normalfont\itshape ##1\@addpunct{.}]\normalfont}%
\medbreak\item[\hskip\labelsep \normalfont(##2) \normalfont\itshape ##1 \normalfont ##3\@addpunct{.}] \normalfont}
\medbreak\item[\hskip\labelsep  \normalfont\itshape ##1\@addpunct{.}]\normalfont}%
\medbreak\item[\hskip\labelsep \normalfont\itshape ##1 \normalfont ##3\@addpunct{.}] \normalfont}
 \newenvironment{claim}[1][\unskip]{\bf
 \medskip \noindent Claim. \it}{\unskip\upshape}
 \newcounter{case}
 \newenvironment{case}[1][\unskip]{\refstepcounter{case}\bf
 \medskip \noindent Case \thecase\ #1. \it}{\unskip\upshape}
 \renewcommand{\thecase}{\arabic{case}}
 \newcounter{subcase}
 \newenvironment{subcase}[1][\unskip]{\refstepcounter{subcase}\bf
 \medskip \noindent \hskip\parindent Subcase \thesubcase\ #1. \it}{\unskip\upshape}
\numberwithin{subcase}{case}
 \newcounter{subsubcase}
 \newenvironment{subsubcase}[1][\unskip]{\refstepcounter{subsubcase}\bf
 \medskip \noindent \hskip2\parindent Subsubcase \thesubsubcase\ #1. \it}{\unskip\upshape}
\numberwithin{subsubcase}{subcase}
\crefname{subsubcase}{Subsubcase}{Subsubcases}
 \newcounter{subsubsubcase}
 \newenvironment{subsubsubcase}[1][\unskip]{\refstepcounter{subsubsubcase}\bf
 \medskip \noindent \hskip3\parindent Subsubsubcase \thesubsubsubcase\ #1. \it}{\unskip\upshape}
\numberwithin{subsubsubcase}{subsubcase}
\numberwithin{equation}{section}
\theoremstyle{plain}
\newtheorem{thm}[equation]{Theorem}
\newtheorem{prop}[equation]{Proposition}
\newtheorem{cor}[equation]{Corollary}
\newtheorem{lem}[equation]{Lemma}
\newtheorem{FGL}[equation]{Lemma}
\crefname{lem}{Lemma}{Lemmas}
\crefname{cor}{Corollary}{Corollaries}
\theoremstyle{remark}
\newtheorem{rem}[equation]{Remark}
\newtheorem{obs}[equation]{Observation}
\newtheorem*{ack}{Acknowledgments}[section]
\newtheorem*{notation}{Notation}[section]
\newcounter{saveBibCtr}
\begin{document}

\title[Cayley graphs of order $16p$ are hamiltonian]
{Cayley graphs of order $\boldsymbol{16p}$ are hamiltonian}

\author[S.\,J.\,Curran]{Stephen J. Curran}
\address{Department of Mathematics, 
University of Pittsburgh at Johnstown, 
Johnstown, PA 15904, USA}
\email{SJCurran@pitt.edu}

\author[D.\,W.\,Morris]{Dave Witte Morris}
 \address{Department of Mathematics and Computer Science,
 University of Lethbridge,
 Lethbridge, Alberta, T1K~3M4, Canada}
 \email{Dave.Morris@uleth.ca, 
 \href{http://people.uleth.ca/~dave.morris/}{http://people.uleth.ca/$\sim$dave.morris/}
 }

\author[J.\,Morris]{Joy Morris}
 \address{Department of Mathematics and Computer Science,
 University of Lethbridge,
 Lethbridge, Alberta, T1K~3M4, Canada}
 \email{Joy.Morris@uleth.ca, 
 \href{http://people.uleth.ca/~morris/}{http://people.uleth.ca/$\sim$morris/}
 }

\date{\today}

\begin{abstract}
Suppose $G$ is a finite group, such that $|G| = 16p$, where $p$~is prime. We show that if $S$ is any generating set of~$G$, then there is a hamiltonian cycle in the corresponding Cayley graph $\Cay(G;S)$.
\end{abstract}

\maketitle

\section{Introduction} \label{intro}

This paper establishes one of the cases of Theorem~1.2(1) 
of \cite{M2Slovenian-LowOrder}. Namely, several of the main results of that paper combine to show:

	\begin{align} \label{kp}
	\begin{matrix}
	\text{\it Every connected Cayley graph on~$G$ has a hamiltonian cycle}
	\\ \text{\it if\/ $|G| = kp$, 
	where $p$~is prime, $1 \le k < 32$, and $k \notin \{16, 24, 27, 30\}$.}
	\end{matrix}
	\end{align}
We handle the first excluded case:

\begin{thm} \label{16p}
If $|G| = 16p$, where $p$~is prime, then every connected Cayley graph on~$G$ has a hamiltonian cycle.
\end{thm}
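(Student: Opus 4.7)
The plan is to prove \cref{16p} by case analysis on the prime~$p$. By Sylow's theorems the number $n_p$ of Sylow $p$-subgroups of~$G$ divides~$16$ and satisfies $n_p\equiv 1\pmod p$, and for every prime $p\notin\{2,3,5,7\}$ this forces $n_p=1$; so the Sylow $p$-subgroup~$P$ is normal in~$G$ and, being of prime order, is cyclic. The small primes $p\in\{2,3,5,7\}$ produce $|G|\in\{32,48,80,112\}$, which I would dispose of first, either by invoking the earlier papers in the series that combine to yield~\pref{kp} or, where a gap remains, by a direct (possibly computer-assisted) argument for the relatively few isomorphism types of~$G$ that arise. I expect this bookkeeping to be voluminous but not conceptually hard.

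For the main range $p\ge 11$ I would apply the Factor Group Lemma with $N=P$. Because $P$~is cyclic of prime order, \emph{every} nontrivial element of~$P$ generates it, so the task reduces to the following: given a generating set~$S$ of~$G$, exhibit a hamiltonian cycle in the quotient Cayley graph $\Cay(G/P;\overline{S})$ whose voltage (the product in~$P$ of a lift of the cycle) is nontrivial. Since $|G/P|=16$, a hamiltonian cycle in $\Cay(\bar G;\overline{S})$ is already known to exist for every generating multiset $\overline S$, so the existence question is settled; the entire difficulty lies in \emph{controlling the voltage}. I would organize the remaining work by the isomorphism type of $\bar G=G/P$ (there are $14$~such types) and, within each, by the structure of~$\overline{S}$.

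A preliminary reduction handles the easy case $S\cap P\neq\emptyset$: picking any $s\in S\cap P$, one builds a hamiltonian cycle in~$\bar G$ using $\overline{S}\setminus\{\bar 1\}$ and threads it through the $p$ cosets of~$P$ by inserting $s$-edges in the standard way. Thus the essential case is $S\cap P=\emptyset$, where $\overline S$ is a genuine generating multiset for~$\bar G$ of the same size as~$S$ and each hamiltonian cycle of $\Cay(\bar G;\overline S)$ has a well-defined voltage in~$P$.

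The main obstacle, as is typical in such arguments, will be the sporadic pairs $(\bar G,\overline{S})$ for which every hamiltonian cycle of $\Cay(\bar G;\overline{S})$ has trivial voltage in~$P$. Standard auxiliary tools are then brought in: rotating a pair of consecutive edges along a given hamiltonian cycle (which changes the voltage by a commutator of the two generators involved); passing to a different normal subgroup of~$G$ when one is available (for example a characteristic subgroup of the Sylow $2$-subgroup that happens to be normal in~$G$, yielding a chain of cyclic quotients); or replacing the cycle version of the Factor Group Lemma by its hamiltonian-path variant, which demands only that a certain product equal a generator of a cyclic subgroup complementing~$P$. Enumerating and resolving the exceptional pairs across the $14$ isomorphism types of~$\bar G$ will be the principal labour of the proof.
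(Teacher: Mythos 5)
Your overall frame (dispose of $p\in\{2,3,5,7\}$ by Sylow counting plus ad hoc arguments, then use the Factor Group Lemma for the normal-Sylow case) matches the paper's opening moves, but from that point your route diverges: you propose to quotient by $N=P$ itself, obtaining the $14$ groups of order $16$ as quotients and then fighting to make the voltage in $\integer_p$ nontrivial. The paper does almost the opposite: after using the Keating--Witte theorem to reduce to the case where $G'=Q'\times P$ is cyclic of order $2p$ or $4p$, it applies the Factor Group Lemma with $N$ a cyclic normal subgroup of $2$-power order (typically $Q'$, $\Phi(Q')$, $\langle s^2\rangle$, or a subgroup like $\langle y^4w\rangle$), so that $G/N$ has order $4p$ or $8p$ and the already-established results for those orders supply a hamiltonian cycle in the quotient through a prescribed edge for free (\cref{FGL(gen-twice)}, \cref{FGL(order2)}, \cref{QuotientIs4pOr8p}). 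The condition to verify then is that a product generates a cyclic $2$-group of order at most $4$ --- a parity/mod-$4$ computation independent of $p$ --- rather than that it generates $\integer_p$. The case division is by $G/G'\in\{\integer_2^2,\ \integer_4\times\integer_2,\ \integer_2^3\}$ and by the size of the minimal generating set, not by the isomorphism type of $G/P$.

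The genuine gap is that your plan defers exactly the part that constitutes the proof. You correctly identify that ``the entire difficulty lies in controlling the voltage'' and that the ``sporadic pairs $(\bar G,\overline S)$ for which every hamiltonian cycle has trivial voltage'' are the obstacle, but you neither enumerate these pairs nor resolve any of them; listing candidate tools (edge rotations, alternative normal subgroups, a path variant) is not a proof that they suffice. This is not a formality: such degenerate configurations really occur (for instance, generating sets of three involutions with $Q\iso D_{16}$, or the three-generator configurations with $G/G'\iso\integer_2^3$ treated in \cref{Z2xZ2xZ2Sect}, where one must argue separately according to which commutators $[s,t]$ generate $G'$), and in the paper each one requires an explicit cycle together with a computation of its endpoint, or a switch to a different normal subgroup chosen using structural information about $G$ (e.g.\ \cref{G'2gen}, \cref{Q'Cor}) that your outline never develops. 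A related omission: without invoking Keating--Witte you must also handle the cases where $Q$ is abelian or where $G'$ is a $2$-group, which your enumeration by the $14$ types of $G/P$ would otherwise sweep in. As written, the proposal is a plausible strategy statement, not a proof of \cref{16p}.
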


\begin{rem}
The cases $k = 27$ and $k = 30$ are covered in \cite{GhaderpourMorris-27p,GhaderpourMorris-30p}, but it seems that the case $k = 24$ will be more difficult.
\end{rem}

Here is an outline of the paper:
	\begin{enumerate}
	\item[\ref{intro}.] Introduction
	\item[\ref{prelim-hamcyc}.] Preliminaries on hamiltonian cycles in Cayley graphs
		\begin{enumerate}
		\item[\ref{FGLSect}.] Factor Group Lemma
		\item[\ref{CyclicNormalSect}.] Generator in a cyclic, normal subgroup
		\item[\ref{MiscSect}.]  Miscellaneous results
		\end{enumerate}
	\item[\ref{pNotNormalSect}.] Groups without a normal Sylow $p$-subgroup
		\begin{enumerate}
		\item[\ref{pNotNormalSect-48}.] Groups of order $48$
		\item[\ref{pNotNormalSect-80}.] Groups of order $80$
		\item[\ref{pNotNormalSect-112}.] Groups of order $112$
		\end{enumerate}
	\item[\ref{prelim-16p}.] Preliminaries on groups of order~$16p$
	\item[\ref{Z2xZ2Sect}.] The case where $G/G' \iso \integer_2 \times \integer_2$
	\item[\ref{Z4xZ2Sect}.] The case where $G/G' \iso \integer_4 \times \integer_2$
	\item[\ref{Z2xZ2xZ2Sect}.] The case where $G/G' \iso \integer_2 \times \integer_2 \times \integer_2$
	\end{enumerate}

\begin{ack}
This research was partially supported by a grant from the Natural Sciences and Engineering Research Council of Canada.
Most of the work was carried out during a visit of S.J.C.\ to the University of Lethbridge. 
\end{ack}

\section{Preliminaries on hamiltonian cycles in Cayley graphs} \label{prelim-hamcyc}

For ease of reference, we reproduce several useful results that provide hamiltonian cycles in Cayley graphs.

\begin{notation}
For any group~$G$, and any $a,b \in G$, we use:
	\begin{enumerate}
	\item $[a,b]$ to denote the \emph{commutator} $a^{-1} b^{-1} a b$,
	\item $b^a$ to denote the \emph{conjugate} $a^{-1} b a$,
	\item $G'$ to denote the \emph{commutator subgroup} $[G,G]$ of~$G$,
	and
	\item $\Phi(G)$ to denote the \emph{Frattini subgroup} of~$G$.
	\end{enumerate}
See \cite[\S5.1]{Gorenstein-FinGrps} for some basic properties of the Frattini subgroup (and its definition).
\end{notation}

\subsection{Factor Group Lemma}
\label{FGLSect}

The following elementary results are well known (and easy to prove).

\begin{FGL}[(``Factor Group Lemma'' {\cite[\S2.2]{WitteGallian-survey}})] \label{FGL}
Suppose
 \begin{itemize}
 \item $N$ is a cyclic, normal subgroup of~$G$,
 \item $(s_1,s_2,\ldots,s_m)$ is a hamiltonian cycle in $\Cay(G/N;S)$,
 and
 \item the product $s_1s_2\cdots s_m$ generates~$N$.
 \end{itemize}
 Then $(s_1,s_2,\ldots,s_m)^{|N|}$ is a hamiltonian cycle in $\Cay(G;S)$.
 \end{FGL}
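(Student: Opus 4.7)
My plan is to verify directly that the walk $W := (s_1, s_2, \ldots, s_m)^{|N|}$, which has length $m \cdot |N| = [G:N] \cdot |N| = |G|$, visits every vertex of~$G$ exactly once and returns to the identity. Since the length already equals the number of vertices, it suffices to check that the successive partial products are pairwise distinct and that the final one equals~$e$.

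Let $n := s_1 s_2 \cdots s_m$, which is a generator of the cyclic group~$N$ by hypothesis. Each block of $m$ consecutive edges of~$W$ right-multiplies the current vertex by~$n$, so after $km + i$ edges (with $0 \le k < |N|$ and $0 \le i < m$) the walk is at $n^k \cdot s_1 s_2 \cdots s_i$. To show these vertices are pairwise distinct, I would suppose $n^k s_1 \cdots s_i = n^\ell s_1 \cdots s_j$ and project to~$G/N$: since $N \normal G$ and $n^k, n^\ell \in N$, the two sides become partial products along the given hamiltonian cycle in $\Cay(G/N;S)$, and because that cycle visits each coset of~$N$ exactly once, the projection forces $i = j$. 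Cancellation then yields $n^k = n^\ell$, and since $n$ has order~$|N|$, we conclude $k = \ell$. Finally, $W$ closes because after $|N|$ complete blocks the walk is at $n^{|N|} = e$.

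The argument is essentially bookkeeping, so I do not anticipate a real obstacle. The one subtlety worth attending to is the Cayley graph multiplication convention, which determines whether the $n^k$ factor appears on the left or on the right of the partial product; once this convention is fixed, the remainder of the verification is immediate.
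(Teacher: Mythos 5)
Your verification is correct and is the standard direct argument for the Factor Group Lemma: count that the walk has length $|G|$, show the partial products $n^k s_1\cdots s_i$ are distinct by projecting to $G/N$ to pin down $i$ and then cancelling to pin down $k$, and close up via $n^{|N|}=e$. The paper itself gives no proof, citing the result as well known from the Witte--Gallian survey, and your argument is exactly the one found there, so there is nothing to compare beyond noting that your attention to the multiplication convention (the $n^k$ factor sits on the left when walks act by right multiplication) is the only point where care is needed.
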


\begin{cor} \label{FGL(gen-twice)}
  Suppose
 \begin{itemize}
 \item $N$ is a cyclic, normal subgroup of~$G$, such that $|N|$ is a prime power,
  \item $\langle s^{-1} t \rangle = N$ for some $s,t \in S \cup S^{-1}$,
  and
 \item there is a hamiltonian cycle in $\Cay(G/N;S)$ that uses at least one edge labelled~$s$.
 \end{itemize}
 Then there is a hamiltonian cycle in $\Cay(G;S)$.
 \end{cor}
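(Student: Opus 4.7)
The plan is to reduce to the Factor Group Lemma by making at most one local edit to the given hamiltonian cycle of $\Cay(G/N;S)$. Let $(s_1,\ldots,s_m)$ be that hamiltonian cycle, chosen so that $s_i = s$ for some index~$i$. Because $s^{-1}t \in N$, the elements $s$ and~$t$ project to the same element of $G/N$, so replacing $s_i$ by~$t$ yields another hamiltonian cycle $(s_1,\ldots,s_{i-1},t,s_{i+1},\ldots,s_m)$ in $\Cay(G/N;S)$. Write $p = s_1 s_2 \cdots s_m$ for the product along the original cycle and $q$ for the product along the modified cycle; both $p$ and $q$ lie in~$N$.

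Next I would compare $p$ and $q$. A direct calculation gives
\[
q = s_1 \cdots s_{i-1}\, s\, (s^{-1}t)\, s_{i+1} \cdots s_m = p \cdot (s^{-1}t)^{\,s_{i+1}\cdots s_m},
\]
so $q = p \cdot u$, where $u$ is a conjugate of $s^{-1}t$ by an element of~$G$. Since $N$ is normal, $u \in N$; and since $N$ is cyclic, conjugation by any element of $G$ acts on $N$ as an automorphism, hence carries generators to generators. Thus $u$ generates~$N$ (because $s^{-1}t$ does, by hypothesis).

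Now I would use the prime-power hypothesis to conclude that at least one of $p$, $q$ generates~$N$. Because $|N|$ is a prime power, the cyclic group~$N$ has a unique maximal subgroup~$M$, and an element of~$N$ generates~$N$ if and only if it lies outside~$M$. If both $p$ and $q$ were in~$M$, then $u = p^{-1}q$ would also lie in~$M$, contradicting the fact that $u$ generates~$N$. Hence one of the two hamiltonian cycles in $\Cay(G/N;S)$ has a product that generates~$N$, and \cref{FGL} then produces a hamiltonian cycle in $\Cay(G;S)$.

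The only subtle point is the normality/cyclicity step that certifies $u$ as a generator of~$N$; once that is in hand, the prime-power structure of~$N$ (equivalently, the fact that its non-generators form a proper subgroup) makes the dichotomy between $p$ and $q$ automatic. No case analysis beyond this is required.
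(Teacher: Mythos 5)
Your proof is correct, and it is exactly the standard argument that the paper leaves implicit (the paper states this corollary among ``elementary results\dots well known (and easy to prove)'' and gives no proof): swap one $s$-edge for a $t$-edge, note the two voltages differ by a conjugate of $s^{-1}t$ and hence by a generator of the normal cyclic group~$N$, and use the unique maximal subgroup of the prime-power cyclic group~$N$ to conclude that at least one of the two products generates~$N$, so \cref{FGL} applies. The only point worth a half-sentence more care is that the edge labelled~$s$ might be traversed in the $s^{-1}$ direction, but the identical argument then runs with $s^{-1}$ and $t^{-1}$, since $st^{-1}$ is a conjugate of $(s^{-1}t)^{-1}$ and so also generates~$N$.
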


\begin{cor} \label{FGL(order2)}
  Suppose
 \begin{itemize}
 \item $N$ is a cyclic, normal subgroup of~$G$, such that $|N|$ is a prime power,
  \item $s \in S$, with $s^2 \in N \smallsetminus \Phi(N)$,
    and
 \item there is a hamiltonian cycle in $\Cay(G/N;S)$ that uses at least one edge labelled~$s$.
 \end{itemize}
 Then there is a hamiltonian cycle in $\Cay(G;S)$.
 \end{cor}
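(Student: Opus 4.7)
The plan is to reduce this to \cref{FGL(gen-twice)} rather than invoking the Factor Group Lemma directly. I will set $t := s^{-1}$, which lies in $S^{-1} \subseteq S \cup S^{-1}$ because $s \in S$, and observe that $s^{-1}t = s^{-2}$. With this choice, the hamiltonian cycle in $\Cay(G/N;S)$ using an edge labelled~$s$ is exactly the hypothesis already supplied, and the cyclic prime-power normal subgroup~$N$ is the same in both statements. So the only thing left to check, in order to apply \cref{FGL(gen-twice)}, is that $\langle s^{-2}\rangle = N$.

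For this I will use the standard fact that a cyclic group of prime-power order has a unique maximal subgroup, which coincides with its Frattini subgroup. Consequently, an element of~$N$ generates~$N$ if and only if it lies outside~$\Phi(N)$. The hypothesis gives $s^2 \in N \smallsetminus \Phi(N)$, and because $\Phi(N)$ is a subgroup (hence closed under inversion), we also have $s^{-2} = (s^2)^{-1} \in N \smallsetminus \Phi(N)$. Therefore $\langle s^{-2}\rangle = N$, and \cref{FGL(gen-twice)} delivers the desired hamiltonian cycle in $\Cay(G;S)$.

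There is essentially no obstacle to this argument; the statement is best viewed as a convenient packaging of the special case of \cref{FGL(gen-twice)} in which the two generators $s$ and~$t$ are chosen to be inverses of the same element. The only ingredient beyond the previous corollary is the characterization of $\Phi(N)$ for a cyclic $p$-group, which is standard and is already referenced in the notation section via \cite[\S5.1]{Gorenstein-FinGrps}. I expect this corollary to be applied in later sections precisely when an involution-like generator~$s$ has the property that $s^2$ generates the cyclic normal subgroup~$N$ through which one wishes to lift a hamiltonian cycle.
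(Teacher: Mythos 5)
Your proposal is correct: the paper states this corollary without proof (introducing it among ``elementary results \dots\ well known (and easy to prove)''), and your reduction to \cref{FGL(gen-twice)} by taking $t = s^{-1}$ --- so that $s^{-1}t = s^{-2}$ generates the cyclic prime-power group $N$ precisely because $s^2 \in N \smallsetminus \Phi(N)$ and $\Phi(N)$ is the unique maximal subgroup --- is exactly the intended one-line argument.
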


\begin{lem}[{}{(\cite[Cor.~2.9]{M2Slovenian-LowOrder}}] \label{MultiDouble}
  Suppose
 \begin{itemize}
 \item $S$ is a generating set of~$G$,
 \item $H$ is a subgroup of~$G$, such that $|H|$ is prime,
 \item the quotient multigraph $H \backslash \Cay(G;S)$ has a hamiltonian cycle~$C$,
 and
 \item $C$ uses some double-edge of~$H \backslash \Cay(G;S)$.
 \end{itemize}
 Then there is a hamiltonian cycle in $\Cay(G;S)$.
 \end{lem}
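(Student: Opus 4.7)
The plan is to mimic the proof of \cref{FGL}, using the double edge to repair the lift if it would otherwise fail to close up. Write the hamiltonian cycle as $C = (s_1, s_2, \ldots, s_m)$, where $m = |G|/|H|$, and suppose the double edge is traversed at position~$i$, so there is a second label $s_i' \in S \cup S^{-1}$, distinct from $s_i$, with $H g\, s_i = H g\, s_i'$, where $g := s_1 s_2 \cdots s_{i-1}$. Lift $C$ to a walk in $\Cay(G;S)$ starting at the identity: this yields a path of length~$m$ ending at $\eta := s_1 s_2 \cdots s_m$, and $\eta \in H$ because $C$ closes up in the quotient.

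First I would handle the case $\eta \ne e$. Since $|H|$ is prime, $\eta$~generates~$H$. Iterating the word $(s_1,\ldots,s_m)$ a total of $|H|$ times from~$e$ traces out the vertices $\eta^k (s_1 \cdots s_j)$ for $0 \le k < |H|$ and $0 \le j < m$. These lie in the coset $H(s_1 \cdots s_j)$, which varies over distinct cosets as~$j$ varies (since $C$ is a hamiltonian cycle in the quotient), and within a single coset the $|H|$ values $\eta^k(s_1 \cdots s_j)$ are distinct because $\langle \eta \rangle = H$. Hence all $|G|$ vertices are visited, giving a hamiltonian cycle in $\Cay(G;S)$. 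Note that normality of~$H$ is not required: the containment $\eta \in H$ already forces $H\eta^k = H$ for all~$k$, which is all the argument uses.

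Otherwise $\eta = e$. Replace $s_i$ by $s_i'$ in the word, producing $C' = (s_1, \ldots, s_{i-1}, s_i', s_{i+1}, \ldots, s_m)$, which still traces the original hamiltonian cycle in $H \backslash \Cay(G;S)$. The new lift from~$e$ ends at some $\eta'$ satisfying
\[ \eta (\eta')^{-1} = g\, s_i (s_i')^{-1}\, g^{-1}. \]
The double-edge condition says exactly that this element lies in~$H$, and it is nontrivial because $s_i \ne s_i'$; even in the borderline case $s_i' = s_i^{-1}$ we get $g s_i^2 g^{-1}$ with $s_i^2 \ne e$, since $s_i$ cannot be an involution without collapsing the two parallel edges into one. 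Thus $\eta' \ne e$, and the previous paragraph applies to~$C'$.

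The main obstacle is really only a piece of conceptual bookkeeping: pinning down what ``$C$ uses a double edge'' means precisely enough to guarantee that $g\, s_i (s_i')^{-1}\, g^{-1}$ is a \emph{nontrivial} element of~$H$, and sanity-checking the degenerate possibility $s_i' = s_i^{-1}$. Once that is in hand, the algebraic content is a direct variant of \cref{FGL}, with the primality of~$|H|$ substituting for the normality hypothesis that \cref{FGL} requires.
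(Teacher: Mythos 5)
Your argument is correct, and it is essentially the standard proof of this fact: lift the cycle, observe that the endpoint $\eta=s_1\cdots s_m$ lies in~$H$ and generates it whenever it is nontrivial, and use the parallel edge to replace $\eta$ by $\eta\cdot\bigl(g\,s_i'(s_i)^{-1}g^{-1}\bigr)$, which is a nontrivial element of~$H$ when $\eta=e$; your care with the degenerate case $s_i'=s_i^{-1}$ and with the fact that normality of~$H$ is not needed is exactly right. The paper itself gives no proof, citing \cite[Cor.~2.9]{M2Slovenian-LowOrder}, and your write-up is a faithful reconstruction of the argument given there.
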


\subsection{Generator in a cyclic, normal subgroup}
\label{CyclicNormalSect}

\begin{thm}[{}{(Alspach \cite[Cor.~5.2]{Alspach-lifting})}]
 \label{AlspachSemiProd}
 Suppose
 \begin{itemize}
 \item $s$ and~$t$ are elements of~$G$,
 and
 \item $G = \langle s \rangle \ltimes \langle t \rangle$.
 \end{itemize}
 Then $\Cay(G;s,t)$ has a hamiltonian cycle.
 \end{thm}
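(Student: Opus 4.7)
Write $n=|s|$ and $m=|t|$, and let $k$ (coprime to~$m$) be the integer defined by $s^{-1}ts=t^k$, so every element of~$G$ has the unique normal form $s^it^j$ with $0\le i<n$ and $0\le j<m$; note that $k^n\equiv 1\pmod{m}$ since $s^n=1$. The plan is to lift the natural hamiltonian cycle $(s,s,\ldots,s)$ of the quotient $\Cay(G/N;s)$ to a hamiltonian cycle in $\Cay(G;s,t)$, where $N=\langle t\rangle$ is the normal cyclic complement. The cosets $N,sN,\ldots,s^{n-1}N$ partition~$G$ into $n$~classes of size~$m$, and each class carries a single $m$-cycle of $t$-edges.

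First I would try a \emph{snake construction}: in each coset $s^iN$, traverse all $m$~vertices by $m-1$ consecutive $t^{\epsilon_i}$-edges for some choice of direction $\epsilon_i\in\{+1,-1\}$, then cross to the next coset by a single $s$-edge. Letting $a_i$ denote the entry point of coset $s^iN$ with $a_0=0$, a direct calculation using $(s^it^j)\cdot s = s^{i+1}t^{jk}$ yields the recursion $a_{i+1}\equiv k\bigl(a_i+(m-1)\epsilon_i\bigr)\pmod{m}$. Unwinding the recursion and requiring $a_n\equiv 0\pmod{m}$ shows that the walk closes into a hamiltonian cycle precisely when
\[
\sum_{i=0}^{n-1}\epsilon_i\,k^{n-i-1}\equiv 0\pmod{m},
\]
where I have used $\gcd(m-1,m)=1$ to cancel the outside factor of $m-1$.

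The main obstacle is to verify that this congruence can always be satisfied with $\epsilon_i\in\{\pm1\}$, and to produce a replacement construction when it cannot. The approach I would take is a case analysis on $m$ and on the order of~$k$ in $(\integer/m)^\times$. When $m$~is odd, $2$~is invertible modulo~$m$, and toggling any single $\epsilon_i$ shifts the sum by $\pm 2k^{n-i-1}$; from this one argues by a counting/induction argument that the $2^n$ achievable sums sweep through a translate of the subgroup of $\integer/m$ generated by the $k^j$, and in particular hit~$0$. When $m$~is even, parity obstructions can appear---for example, $m=2$ with $n$~odd forces every signed sum of $n$~ones to be odd---and the plain snake must be replaced. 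In these residual cases I would modify the pattern by processing cosets of~$N$ in \emph{blocks} of two or more, designing a local hamiltonian path across each block that uses an extra $s$-edge to flip the parity of the closure condition; the additional flexibility this provides is enough to force closure in every remaining case. Orchestrating these modified snakes across all triples $(n,m,k)$ is the technical heart of the argument.
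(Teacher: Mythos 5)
You should first note that the paper offers no proof of this statement: it is quoted directly from Alspach's paper (Corollary~5.2 of the cited reference), so your argument has to stand entirely on its own, and as written it does not. Your setup and the closure congruence $\sum_{i=0}^{n-1}\epsilon_i k^{n-i-1}\equiv 0\pmod m$ for the plain snake are correct, but the two claims you make about resolving it are, respectively, false and unproved. The claim that for odd $m$ the $2^n$ achievable sums ``sweep through a translate of the subgroup generated by the $k^j$'' (which, since $k^0=1$, would be all of $\integer/m$) already fails on cardinality grounds whenever $2^n<m$, and there are genuine counterexamples: take $n=2$, $m=15$, $k=4$ (so $k^2\equiv 1\pmod{15}$); the four achievable sums $4\epsilon_0+\epsilon_1$ are $5,3,12,10$ modulo $15$, and none is $0$. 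So the plain snake fails for odd $m$ as well, not only in the even/parity cases you flag.

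That leaves everything riding on the ``block'' modification, which you describe only as providing ``enough additional flexibility to force closure in every remaining case.'' That is an assertion of the theorem, not a proof of it. In the $n=2$ example above, $\Cay(G;s,t)$ is two $15$-cycles joined by a perfect matching, and graphs of that shape need not be hamiltonian at all (the Petersen graph is two $5$-cycles plus a matching); so exhibiting the weaving pattern that works for every triple $(n,m,k)$ --- including controlling how the twist $j\mapsto kj$ at each $s$-crossing interacts with the entry and exit points of each block --- is precisely the content of Alspach's lifting machinery. Until that construction is written down and verified in general, your proposal has a gap located exactly where the difficulty of the theorem lies.
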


The following observation is well known:

\begin{lem}[{}{\cite[Lem.~2.27]{M2Slovenian-LowOrder}}]
\label{NormalEasy}
 Let $S$ generate $G$ and let $s \in S$, such that $\langle s \rangle
\normal G$. If
 \begin{itemize}
 \item $\Cay \bigl( G/\langle s \rangle ; S \bigr)$ has a
hamiltonian cycle,
 and
 \item either
 \begin{enumerate}
 \item \label{NormalEasy-Z} 
 $s \in Z(G)$,
 or
 \item \label{NormalEasy-p}
 $|s|$ is prime,
 \end{enumerate}
 \end{itemize}
 then $\Cay(G;S)$ has a hamiltonian cycle.
 \end{lem}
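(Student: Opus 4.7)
The natural strategy is to apply the Factor Group Lemma (\cref{FGL}) to $N = \langle s \rangle$, which is cyclic by construction and normal by hypothesis. Let $(s_1, \ldots, s_m)$ be the given hamiltonian cycle in $\Cay(G/\langle s \rangle; S)$: since $s$ projects to the identity in the quotient, each $s_i$ lies in $(S \cup S^{-1}) \setminus \langle s \rangle$, and the cycle's closing relation forces $\sigma := s_1 s_2 \cdots s_m$ to lie in $\langle s \rangle$. Writing $\sigma = s^k$, the Factor Group Lemma immediately yields the hamiltonian cycle $(s_1, \ldots, s_m)^{|s|}$ in $\Cay(G;S)$ as soon as $s^k$ generates $\langle s \rangle$. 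The substantive task is thus the degenerate case $\langle s^k \rangle \subsetneq \langle s \rangle$.

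In case (2), $|s|$ is prime, so $s^k$ fails to generate only when $s^k = e$; the lifted walk then closes in $G$ but covers only $m$ of the $|G| = m \cdot |s|$ vertices, one per coset of $\langle s \rangle$. My plan is to construct a hamiltonian cycle directly by weaving through layers of $\langle s \rangle$. A candidate is the walk $(s_1, \ldots, s_{m-1}, s)$ iterated $|s|$ times, in which the inserted $s$-edges provide transitions between layers; primality of $|s|$ guarantees that any nontrivial shift within $\langle s \rangle$ is a generator, so such an iteration has a chance of reaching every layer. The technical work is verifying that the resulting walk traverses each vertex exactly once and closes up, which may require replacing the last copy of the basic block by a suitable variant depending on how the initial segment of the lifted cycle interacts with the coset structure.

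In case (1), $s \in Z(G)$, and centrality gives the freedom to slide $s$-edges through the walk without altering their contribution to the product. My plan is to insert $s$-edges into the lifted walk to obtain a modified walk whose product generates $\langle s \rangle$, and then conclude via the Factor Group Lemma; alternatively, one can factor through the intermediate quotient $G/\langle s^d \rangle$ with $d = \gcd(k, |s|)$ and reapply the Factor Group Lemma there, since $s^k$ does generate $\langle s^d \rangle$. The main obstacle in both cases is the degenerate regime where the naïve Factor Group Lemma does not apply: case~(1) supplies algebraic flexibility via centrality, case~(2) supplies arithmetic rigidity via primality, and each hypothesis enables a different direct construction. I expect the bookkeeping needed to verify that the modified walk is a simple hamiltonian cycle in $\Cay(G;S)$, and in particular does not revisit a vertex at the interface between successive iterations, to be the most delicate part of the argument.
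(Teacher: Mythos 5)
The paper does not actually prove this lemma; it is quoted from \cite[Lem.~2.27]{M2Slovenian-LowOrder} as a known result, so the comparison below is with the standard argument, whose key constructions the paper does reproduce in its proofs of the closely analogous \cref{CyclicNormal2p,CxLHamCyc}. Your reduction is right as far as it goes: \cref{FGL} disposes of the case where $\sigma = s_1s_2\cdots s_m$ generates $\langle s\rangle$, and the substantive case is the degenerate one. But the concrete constructions you offer there do not work. In case~(2), the block $(s_1,\ldots,s_{m-1},s)$ ends at $s_1\cdots s_{m-1}s$, which lies in the coset $\langle s\rangle s_1\cdots s_{m-1}\neq\langle s\rangle$; consequently the second iterate of the block does not retrace the quotient hamiltonian cycle at all (from $s_1\cdots s_{m-1}s$ the edge $s_1$ leads into the coset $\langle s\rangle s_1\cdots s_{m-1}s_1$, not into $\langle s\rangle s_1$), and no adjustment of the final block can repair a walk that already leaves the intended coset pattern on its second pass. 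The natural repairs $(s_1,\ldots,s_m,s)^{|s|}$ and $(s_1,\ldots,s_{m-1},s,s_m)^{|s|}$ are closed spanning walks but each revisits one vertex per block, so they are not cycles either. In case~(1), the intermediate-quotient route is a dead end: with $d=\gcd(k,|s|)$ we have $\sigma=s^k\in\langle s^d\rangle$, so the image of $\sigma$ in $\langle s\rangle/\langle s^d\rangle$ is trivial, and the Factor Group Lemma cannot manufacture the hamiltonian cycle of $\Cay\bigl(G/\langle s^d\rangle;S\bigr)$ that your second application would need as input. Finally, inserting an $s$-edge into the lifted walk does change the endpoint when $s$ is central, but the resulting closed walk projects to the quotient cycle plus a loop, so \cref{FGL} no longer applies and the lift again repeats vertices.

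What actually closes the gap is more delicate weaving than single edge insertions. For case~(1), discard the last edge, regard $(s_1,\ldots,s_{m-1})$ as a hamiltonian path in $\Cay\bigl(G/\langle s\rangle;S\bigr)$, and run the Cartesian-product (``snake'') argument: this is precisely \cref{CxLHamCyc} with $X=\{s\}$, which needs only a hamiltonian path in the quotient and the hypothesis that every element of~$G$ centralizes (or inverts) $\langle s\rangle$. For case~(2), the standard device is to insert a block $s^{k-1}$ after \emph{every} $s_i$, so that each coset of $\langle s\rangle$ is swept out in consecutive runs, and then to verify closure and distinctness of vertices by an endpoint computation; this is exactly the technique carried out in the paper's proof of \cref{CyclicNormal2p}, and the verification there shows the kind of arithmetic (divisibility of~$m$, conjugation exponents) that your sketch would have to confront. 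As written, the proposal identifies the right obstacle but does not supply a construction that surmounts it.
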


Here is another similar result:

\begin{lem} \label{CyclicNormal2p}
  Suppose
 \begin{itemize}
 \item $s \in S$, with $\langle s \rangle \normal G$,
 \item $|s|$ is a divisor of $pq$, where $p$ and~$q$ are distinct primes,
 \item $s^p \in Z(G)$,
 \item $|G/\langle s \rangle|$ is divisible by~$q$,
 and
 \item $\Cay \bigl( G/\langle s \rangle ; S \bigr)$ has a hamiltonian cycle.
 \end{itemize}
 Then there is a hamiltonian cycle in $\Cay(G;S)$.
 \end{lem}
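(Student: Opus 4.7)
The cases $|s|\in\{1,p,q\}$ are quickly dispatched: if $|s|=1$ then $\Cay(G/\langle s\rangle;S)=\Cay(G;S)$ and there is nothing to prove, while if $|s|$ is a prime then \fullcref{NormalEasy}{p} applies immediately. I therefore focus on the substantive case $|s|=pq$, and set $N=\langle s\rangle$ and $H=\langle s^p\rangle$. By hypothesis $H\subseteq Z(G)$; moreover $|H|=q$ and $|N/H|=p$, so the tower $G/N\to G/H\to G$ proceeds by cyclic quotients of prime order.

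My plan is the corresponding two-stage lift. For the first stage, the image of $s$ in $G/H$ lies in $S$, has order exactly $p$, and generates the normal subgroup $N/H$, so applying \fullcref{NormalEasy}{p} to $G/H$ with $N/H$ turns the given hamiltonian cycle in $\Cay(G/N;S)=\Cay\bigl((G/H)/(N/H);S\bigr)$ into a hamiltonian cycle $\tilde C$ in $\Cay(G/H;S)$. For the second stage, view $\tilde C$ as a closed walk in $G/H$: its lift to $G$ starting at the identity terminates at some element $\tau\in H$. If $\tau\ne e$, then $\tau$ generates the prime-order subgroup $H$, and \cref{FGL} applied to the central subgroup $H$ furnishes the hamiltonian cycle $\tilde C^{\,q}$ in $\Cay(G;S)$.

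The main obstacle will be the remaining case $\tau=e$, where I must replace $\tilde C$ by a different hamiltonian cycle in $\Cay(G/H;S)$ having a nontrivial $G$-product in $H$. The plan is to exploit the freedom in the construction of \fullcref{NormalEasy}{p}, whose proof produces $\tilde C$ by combining copies of $C$ with $s$-edges inserted at chosen positions. Writing $g^{-1}sg=s^{c(g)}$ with $c(g)\in(\mathbb{Z}/pq)^{*}$, the centrality of $s^p$ forces $c(g)\equiv 1\pmod q$ for every $g\in G$; consequently each inserted $s$-edge contributes a factor $s^{c(g)}$ to the $G$-product whose residue modulo $p$ can be varied by choosing the insertion position. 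The hypothesis $q\mid|G/N|$ guarantees enough distinct cosets in $G/N$---and hence enough distinct insertion positions available in $\tilde C$---for these residues to span a sufficiently rich set to force the total $G$-product into $H\setminus\{e\}$. Once $\tau$ is so adjusted, \cref{FGL} completes the proof.
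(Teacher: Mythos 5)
Your reduction to the case $|s| = pq$ and the two-stage plan ($G/N \to G/H \to G$ with $H = \langle s^p\rangle$) are fine as far as they go, and the subcase $\tau \neq e$ is correctly dispatched by \cref{FGL}. But the entire content of the lemma is concentrated in the subcase $\tau = e$, and there you have only a plan, not a proof. Worse, the plan rests on a miscalculation of what can actually be varied. Since $s^p \in Z(G)$, the conjugation exponents satisfy $c(g) \equiv 1 \pmod{q}$ for every $g$, so the $H$-component of the contribution of an inserted $s$-edge is always exactly the $q$-part $x$ of~$s$ (or $x^{-1}$ for a reversed edge), \emph{independent of the insertion position}. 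Concretely: pass to $\overline{G} = G/\langle w\rangle$, where $w$ is the $p$-part of~$s$; the image of~$s$ there is central, so the endpoint $\tau$ of the lift of any hamiltonian cycle of $\Cay(G/H;S)$ depends only on the signed number of $s$-edges and on the ordered product of the non-$s$ labels. Shuffling the positions at which $s$-edges are inserted therefore cannot move $\tau$ off the identity. (Varying the ``residue modulo~$p$'' of a contribution, as you propose, only affects the $N/H$-component, which is irrelevant once you are lifting over~$H$.) So the degree of freedom you intend to exploit does not exist, and the hypothesis $q \mid |G/N|$ does not play the role you assign to it.

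For comparison, the paper does not lift in two stages at all: it takes the lift $(s_1,\dots,s_m)$ of the given hamiltonian cycle of $G/\langle s\rangle$, with endpoint $g$ and $k = |s|/|g|$, and shows directly that $(s_1, s^{k-1}, s_2, s^{k-1}, \dots, s_m, s^{k-1})^{|g|}$ is a hamiltonian cycle of $\Cay(G;S)$; the hypothesis $q \mid m$ enters precisely to kill the central factor $x^{(k-1)m}$, while the non-central $p$-part contributions cancel because the conjugates $w^{g'}$ range over all of $G/\langle s\rangle$. If you want to salvage your outline, the missing step is an actual construction, in the case $\tau = e$, of a \emph{different} hamiltonian cycle of $\Cay(G/H;S)$ whose signed count of $s$-edges, or whose ordered product of non-$s$ labels, forces a nontrivial endpoint in~$H$; as written, that step is absent.
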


\begin{proof}
We may assume $|s| = pq$ and $s \notin Z(G)$, for otherwise \cref{NormalEasy} applies. 
Let 
	\begin{itemize}
	\item $(s_1,\ldots,s_m)$ be a hamiltonian cycle in $\Cay \bigl( G/\langle s \rangle ; S \bigr)$, 	\item $g = s_1s_2\cdots s_m$ be its endpoint in~$G$,
	and
	\item $k = | s | / | g |$.
	\end{itemize}
Consider the walk
	$$ (s_1, s^{k-1},s_2, s^{k-1},\ldots,s_m, s^{k-1}) .$$
Writing $s = xw$, where $x$~is the $q$-part of~$s$ and $w$~is the $p$-part of~$s$, and noting that $x \in Z(G)$ (because $x^p = s^p \in Z(G)$), we see that the endpoint is
	\begin{align} \label{CyclicNormal2p-endpt}
	s_1 (xw)^{k-1} s_2  (xw)^{k-1} \cdots s_m  (xw)^{k-1}
	= g \, x^{(k-1)m} \prod_{g' \in G/\langle s \rangle} w^{g'}
	= g,
	\end{align}
since $m = |G/\langle s \rangle|$ is divisible by~$q$, and $\langle w \rangle \cap Z(G) = \{e\}$.

Therefore, the walk
	$$ (s_1, s^{k-1},s_2, s^{k-1},\ldots,s_m, s^{k-1})^{|g|} $$
is closed. Also (using \pref{CyclicNormal2p-endpt}), it is not difficult to see that the walk traverses all of the elements of~$G$. Therefore, it is a hamiltonian cycle in $\Cay(G;S)$.
\end{proof}

\subsection{Miscellaneous results}
\label{MiscSect}

\begin{thm}[(Maru\v si\v c-Durnberger-Keating-Witte \cite{KeatingWitte})] \label{KeatingWitte}
 If $G'$ is a cyclic $p$-group, then
every connected Cayley graph on~$G$ has a hamiltonian cycle.
 \end{thm}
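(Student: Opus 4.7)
The plan is to argue by induction on $|G'|$. In the base case $G' = \{e\}$, the group $G$ is abelian, and it is a classical result (going back to Maru\v si\v c and to Chen--Quimpo) that every connected Cayley graph on a finite abelian group of order at least three has a hamiltonian cycle. This disposes of the smallest case.

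For the inductive step, assume $G'$ is a nontrivial cyclic $p$-group. Since $G'$ is cyclic, it contains a unique subgroup $N$ of order~$p$; being characteristic in the normal subgroup~$G'$, this $N$ is normal in~$G$. Moreover, $(G/N)' = G'/N$ is again a cyclic $p$-group, but of strictly smaller order. By induction, every connected Cayley graph on $G/N$ has a hamiltonian cycle. Hence, for any generating set $S$ of~$G$, its image generates $G/N$, and $\Cay(G/N; S)$ admits a hamiltonian cycle $(s_1, s_2, \ldots, s_m)$.

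To lift this cycle to $\Cay(G;S)$, I would invoke \cref{FGL}: if the product $s_1 s_2 \cdots s_m$ generates~$N$, the Factor Group Lemma produces a hamiltonian cycle in $\Cay(G;S)$. Since $|N| = p$ is prime, any nontrivial element of~$N$ generates it, so it suffices to arrange that the product be nontrivial in~$N$.

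The main obstacle is precisely this last requirement: to rule out the possibility that every hamiltonian cycle in $\Cay(G/N; S)$ has trivial endpoint in~$N$. The idea for overcoming it is to exploit the flexibility in choosing the hamiltonian cycle: a small local modification of a cycle changes its endpoint in~$N$ by a commutator, so one shows that the set of commutators realized by such modifications spans~$N$, forcing some cycle to have nontrivial endpoint. When $G/G'$ has at least two independent generators, rotations and recombinations of partial cycles provide enough flexibility. The harder case is when $G/G'$ is cyclic, so that $G$ is metacyclic; here one must produce an explicit construction, such as a zig-zag hamiltonian cycle built by alternating between cosets of a cyclic subgroup of index~$p$ in a generator-dependent pattern, and compute the resulting endpoint directly. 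This metacyclic subcase is the technical core of the original Keating--Witte argument and is where essentially all the real work lies.
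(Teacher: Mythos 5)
First, a point of comparison: the paper does not prove this statement at all --- it is quoted as a known theorem of Maru\v si\v c, Durnberger, Keating and Witte, with a citation to the literature. So there is no internal proof to measure your argument against; it has to stand on its own.

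As a self-contained argument, your proposal has a genuine gap, and you have in effect flagged it yourself. Everything up to the invocation of the Factor Group Lemma is sound: the base case is the Chen--Quimpo theorem for abelian groups, the subgroup $N$ of order~$p$ in~$G'$ is characteristic in~$G'$ and hence normal in~$G$, $(G/N)' = G'/N$ is a smaller cyclic $p$-group, and since $|N|=p$ it suffices to produce a hamiltonian cycle in $\Cay(G/N;S)$ whose product of edge labels is nontrivial in~$N$. But that last step is essentially the entire content of the theorem, and your proposal replaces it with a description of what a proof would have to do (``one shows that the set of commutators realized by such modifications spans~$N$'', ``one must produce an explicit construction, such as a zig-zag hamiltonian cycle''). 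No such construction is given, and no argument is given that the available modifications actually realize a generating voltage. There is also a structural problem with the scaffolding: the inductive hypothesis, as stated, hands you only the bare existence of \emph{some} hamiltonian cycle in $\Cay(G/N;S)$, with no control over it, whereas a ``perturb the voltage'' argument needs a structured family of cycles to perturb. This is one reason the actual Keating--Witte proof does not induct through $G/N$ in this way; it applies the Factor Group Lemma with $N = G'$ itself, so that the quotient $G/G'$ is abelian, and then does the real work of exhibiting explicit hamiltonian cycles in abelian Cayley graphs whose voltages generate a cyclic $p$-group, including the delicate case where $G/G'$ is cyclic. Until that core construction is supplied, what you have is a correct reduction together with an accurate description of the difficulty, not a proof.
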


The proof in \cite{Witte-pn} yields the following result:

\begin{thm}[{}{\cite[Cor.~3.3]{Morris-2genNilp}}] \label{pkSubgrp}
  Suppose
 \begin{itemize}
 \item $S$ is a generating set of~$G$,
 \item $N$ is a normal $p$-subgroup of~$G$,
 and
 \item $s t^{-1} \in N$, for all $s,t \in S$.
 \end{itemize}
 Then $\Cay(G;S)$ has a hamiltonian cycle.
 \end{thm}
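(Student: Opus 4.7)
The hypothesis $st^{-1} \in N$ for all $s, t \in S$ puts $S$ inside a single coset $s_0 N$ of~$N$, for any fixed $s_0 \in S$. Since $S$ generates~$G$, the image $\bar s_0$ generates $G/N$, so $G/N$ is cyclic of order $n := |G/N|$. In particular, every word $(s_1, \ldots, s_n) \in S^n$ projects to a hamiltonian cycle in $\Cay(G/N;S)$ (its projection is $(\bar s_0)^n$, which visits each coset of~$N$ exactly once), with endpoint $s_1 \cdots s_n$ lying in~$N$.

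The plan is to induct on $|N|$. The case $|N| = 1$ is trivial, since $G = \langle s_0 \rangle$ is cyclic. For the inductive step, choose a minimal $G$-invariant subgroup $M$ of~$N$; because $N$ is a $p$-group, $M$ is elementary abelian. The hypothesis of the theorem descends to $G/M$ with $N/M$ in place of~$N$, so by induction $\Cay(G/M;S)$ has a hamiltonian cycle. What remains is to lift such a cycle to $\Cay(G;S)$.

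If $|M| = p$, then $M$ is cyclic of prime order and the Factor Group Lemma closes the argument, provided some lift of the hamiltonian cycle in $\Cay(G/M;S)$ has accumulated product a nontrivial (hence generating) element of~$M$. This is arranged by a standard edit: starting from a base word, swap one occurrence of $s_0$ for some $t \in S$ with $s_0^{-1}t \in M$; the resulting shift in the endpoint is a conjugate of $s_0^{-1} t \in M$, and one checks that among all such swaps, one produces a nontrivial endpoint in~$M$ (if none does, then every element of $S$ is trivial modulo the kernel of $N \to N/M$, contradicting that $S$ generates $G$ modulo~$M$).

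The main obstacle is the case $|M| > p$, in which $M$ is elementary abelian of rank $\geq 2$ and admits no cyclic $G$-invariant subgroup of order~$p$, so the Factor Group Lemma cannot be invoked to peel off a cyclic piece of~$N$. Here one must lift the cycle through all of~$M$ in a single step. This is done, following Witte~\cite{Witte-pn}, by editing several positions of a base word simultaneously, so that the endpoints of the resulting modified walks realize every element of~$M$, and then splicing the associated paths together into one hamiltonian cycle in $\Cay(G;S)$. The corollary \cite[Cor.~3.3]{Morris-2genNilp} extracts exactly this statement from Witte's proof for $p$-groups; the argument goes through unchanged because only the structure of $N$, not that of~$G$, enters the edit-and-splice construction.
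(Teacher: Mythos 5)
First, note that the paper offers no proof of \cref{pkSubgrp} to compare against: it is imported verbatim from \cite[Cor.~3.3]{Morris-2genNilp}, which in turn extracts it from the argument of \cite{Witte-pn}. Your proposal does correctly establish the easy structural reductions: the hypothesis forces $S$ into a single coset of~$N$, hence $G/N$ is cyclic, and one can try to induct on $|N|$ through a minimal normal subgroup $M \le N$. But the substance of the theorem is not there. For $|M|>p$ you describe Witte's edit-and-splice construction in a sentence and then cite \cite[Cor.~3.3]{Morris-2genNilp} --- which is the very statement you are proving --- so that case is circular, not proved.

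The $|M|=p$ case, which you present as settled, also has a genuine flaw. Your edit requires two distinct elements of $S\cup S^{-1}$ that are congruent modulo~$M$, and your parenthetical claim that otherwise ``every element of $S$ is trivial modulo $M$'' is not a valid deduction. Take $G=N=\integer_p\times\integer_p$, $S=\{(1,0),(0,1)\}$, and $M=\langle(1,0)\rangle$ (a legitimate minimal normal subgroup). Then $\Cay(G/M;S)$ is a $p$-cycle, its only hamiltonian cycles are $\bigl((0,1)^{\pm1}\bigr)^p$ with trivial product in~$G$, and no edit is available, even though $S\not\subseteq M$; so \cref{FGL} can never be invoked. (A cleverer choice, $M=\langle(1,-1)\rangle$, does happen to work in this example, but you give no rule for choosing $M$ and no argument that a good choice always exists.) This is precisely why the theorem --- which contains Witte's theorem as the special case $N=G$ --- cannot be obtained by repeatedly peeling off cyclic normal subgroups with the Factor Group Lemma: the induction must carry a stronger statement about achievable endpoints, or the lift through~$M$ must be built directly by a lattice/splicing construction in the spirit of \cref{CxLHamCyc}. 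As written, your proposal reduces the theorem to itself.
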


The following observation is also known, but we do not know whether it is in the literature, so we provide a proof. Because it is of independent interest, we prove a more general version than we need.

\begin{lem} \label{CxLHamCyc}
Let $S$ generate~$G$, and let $X$ be a subset of~$S$. Assume:
	\begin{itemize}
	\item $\langle X \rangle$ is abelian {\upshape(}and nontrivial\/{\upshape)},
	\item for each $g \in G$, either $g$ centralizes every element of~$\langle X \rangle$, or $g$~inverts every element of~$\langle X \rangle$,
	\item there is a hamiltonian cycle in $\Cay \bigl( \langle X \rangle; X \bigr)$,
	and
	\item there is a hamiltonian path in $\Cay(G/\langle X \rangle; S)$.
	\end{itemize}
Then there is a hamiltonian cycle in $\Cay(G;S)$.
\end{lem}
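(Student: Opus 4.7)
Let $H = \langle X\rangle$. The hypothesis produces a homomorphism $\epsilon\colon G \to \{\pm 1\}$, where $\epsilon(g) = 1$ iff $g$ centralizes $H$; equivalently, $\sigma h \sigma^{-1} = h^{\epsilon(\sigma)}$ for every $h \in H$ and $\sigma \in G$. Write $C_H = (x_1, \ldots, x_k)$ for the given hamiltonian cycle in $\Cay(H;X)$ (so $x_1x_2\cdots x_k = e$), and $P = (s_1, \ldots, s_{m-1})$ for the given hamiltonian path in $\Cay(G/H;S)$, with endpoint $\sigma_{m-1} = s_1\cdots s_{m-1}$.

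The plan is to construct a hamiltonian cycle in $\Cay(G;S)$ of length $mk = |G|$ by interleaving the $k$ edges of $C_H$ between $k$ alternating traversals of $P$ and $P^{-1}$. My first candidate is
\[
W \;=\; (x_1,\, P,\, x_2,\, P^{-1},\, x_3,\, P,\, x_4,\, P^{-1},\, \ldots,\, x_k,\, P^{\pm1}).
\]
To show $W$ is a hamiltonian cycle, I would verify two things. For closure, using $P h P^{-1} = h^{\epsilon(\sigma_{m-1})}$ for $h \in H$, the total product collapses to a word in the abelian group $H$, which in the cleanest case is exactly $x_1 x_2 \cdots x_k = e$. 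For injectivity, the key observation is that in each coset $\sigma_i H$ the entry point on the $j$-th traversal of $P^{\pm 1}$ is obtained from that of the $(j-1)$-st by right-multiplication by $x_j$ (possibly inverted by $\epsilon$), so across all $k$ traversals the entry points trace out a copy of the hamiltonian cycle $C_H$ inside $\sigma_i H$ and visit each of its $k$ elements exactly once.

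The main obstacle is parity: when $k$ is odd, the alternating $P^{\pm 1}$-pattern leaves a residual $\sigma_{m-1}$ in $G/H$; and when $\epsilon(\sigma_{m-1}) = -1$, the closure identity in $H$ becomes $x_1 x_2^{-1} x_3 x_4^{-1} \cdots$, which need not vanish in general. I would handle these by a short case analysis according to $|G:C_G(H)| \in \{1,2\}$ and the parity of $k$. In the centralizing case $H \leq Z(G)$ the $\epsilon$-twists disappear, and for odd $k$ an inductive reduction on $|X|$ via \cref{NormalEasy} is available; in the inverting case one can either absorb the residual parity by replacing one interstitial $x_j$ by a full traversal of $C_H$ inside a single coset (adding exactly $k-1$ edges and a closed subwalk without disturbing the coset count in $G/H$), or split $P$ at a sign-change of $\epsilon$ and treat the two halves separately using the dihedral structure of $H\cdot\langle t\rangle$ for an inverter $t$.
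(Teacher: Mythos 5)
Your underlying picture --- $k$ traversals of $P^{\pm1}$ stitched together by the edges of the cycle $C_H$ --- is the same product structure the paper uses, but as written your argument has a genuine gap. The candidate walk $W$ is a hamiltonian cycle only in the most favourable case: if $k=|H|$ is odd, the alternating pattern ends in the coset $\sigma_{m-1}H$ rather than in $H$, and if $\epsilon(\sigma_{m-1})=-1$ the closure condition becomes $x_1x_2^{-1}x_3x_4^{-1}\cdots=e$, which fails in general. You acknowledge both problems, but the repairs you sketch do not work. Replacing an interstitial edge $x_j$ by a full traversal of $C_H$ inside one coset cannot avoid revisiting vertices: each coset $gH$ has exactly $k$ elements and already receives exactly $k$ vertices from the $k$ traversals of $P^{\pm1}$, so there is no room for additional vertices in that coset (and a ``closed subwalk'' by definition returns to a vertex already on the walk). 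The inductive reduction via \cref{NormalEasy} also does not apply as stated, since that lemma requires a hamiltonian \emph{cycle} in the quotient, whereas the hypothesis here only supplies a path. So the case analysis that is supposed to finish the proof is not actually supplied, and at least one of the proposed branches is unsalvageable in its current form.

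The idea that closes the gap is to stop tracking signs altogether. Because every $g\in G$ either centralizes or inverts $\langle X\rangle$, the conjugate $g^{-1}xg$ of any $x\in X$ lies in $X\cup X^{-1}$; consequently the map $(i,j)\mapsto x_i\,g_j$ (where $[x_0,\ldots,x_m]$ is the hamiltonian cycle in $\Cay(\langle X\rangle;X)$ and $[g_0,\ldots,g_n]$ is a lift of the hamiltonian path in $\Cay(G/\langle X\rangle;S)$) carries every edge of the Cartesian product $C\times L$ of a cycle with a path to an edge of $\Cay(G;S)$, and is injective since the $g_j$ lie in distinct cosets of the normal subgroup $\langle X\rangle$. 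Hence $\Cay(G;S)$ contains a spanning subgraph isomorphic to $C\times L$, and one simply quotes the known fact that such a product has a hamiltonian cycle \cite{ChenQuimpo-hamconn}; that reference already contains the parity case analysis you were struggling with (for odd $k$ one reserves a column of the cylinder grid instead of snaking naively). This is exactly the paper's proof, and it avoids every computation with $\epsilon$.
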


\begin{proof}
Let
	\begin{itemize}
	\item $[x_0,x_1,\ldots,x_m]$ be a hamiltonian cycle in $\Cay\bigl( \langle X \rangle; X \bigr)$,
	\item $[g_0,g_1,\ldots,g_n]$ be a path in $\Cay(G;S)$ that is the lift of a hamiltonian path in $\Cay \bigl( G/\langle X \rangle; S \bigr)$,
	\item $C = \Cay \bigl( \integer_m; \{1\} \bigr)$ be a cycle of length~$m$,
	\item $L$ be the path of length~$n$ with consecutive vertices $0,1,\ldots,n$,
	\item $f \colon V(C) \times V(L) \to G$ be defined by
		$$ f(i,j) = x_i \, g_j .$$
	\end{itemize}
Note that:
	\begin{itemize}
	\item for $0 \le i < m$ and $0\le j < n$,  we have
		$$f(i,j)^{-1} \cdot f(i,j+1) 
		= \bigl( x_i \, g_j  \bigr)^{-1} \bigl( x_i \, g_{j+1}\bigr)
		= g_j^{-1} g_{j+1}
		\in S \cup S^{-1}
		,$$
	because $g_j$ and $g_{j+1}$ are adjacent vertices in $\Cay\bigl( G; S \bigr)$,
	and
	\item for $0 \le i < m$ and $0\le j \le n$,
	and letting $x = x_i^{-1} x_{i+1} \in X \cup X^{-1}$, we have
		$$f(i,j)^{-1} \cdot f(i+1,j) 
		= \bigl( x_i \, g_j \bigr)^{-1} \bigl( x_{i+1} \, g_j \bigr)
		= g_j^{-1} x g_j
		= x^{\pm 1}
		\in X \cup X^{-1}
		.$$
	\end{itemize}
Thus, $f$ is an isomorphism from the Cartesian product $C \times L$ onto a subgraph of $\Cay(G;S)$. Since the two graphs have the same number of vertices, it is a spanning subgraph. Then, since it is easy to see that $C \times L$ has a hamiltonian cycle \cite[Corollary on p.~29]{ChenQuimpo-hamconn}, we conclude that $\Cay(G;S)$ has a hamiltonian cycle.
\end{proof}

\begin{rem} \label{QuotientIs4pOr8p}
When we apply \cref{FGL(gen-twice)} or \cref{FGL(order2)} 
to obtain a hamiltonian cycle in $\Cay(G;S)$, and $G$ has order $16p$,
the order of $G/N$ is either $4p$ or $8p$. Thus, \pref{kp} provides a hamiltonian cycle in $\Cay(G/N;S)$ with at least one edge labelled~$s$.
Similarly,  \pref{kp} provides a hamiltonian cycle in $\Cay(G/\langle s\rangle;S)$ for \cref{NormalEasy,CyclicNormal2p}, and it provides a hamiltonian path in $\Cay(G/\langle X\rangle;S)$ for \cref{CxLHamCyc}.
\end{rem}

\section{Groups without a normal Sylow $p$-subgroup} \label{pNotNormalSect}

In this section, we prove \cref{16p} under the additional assumption that the Sylow $p$-subgroups of~$G$ are not normal:

\begin{prop} \label{pNotNormal}
If\/ $|G| = 16p$, where $p$ is prime, and the Sylow $p$-subgroups of~$G$ are not normal, then every connected Cayley graph on~$G$ has a hamiltonian cycle.
\end{prop}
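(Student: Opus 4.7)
The plan is to use Sylow's theorem to reduce the proof to a small finite list of possible orders for~$G$, and then defer to the three subsections that handle each order individually. Let $n_p$ denote the number of Sylow $p$-subgroups of~$G$. By Sylow's theorem, $n_p$ divides~$16$ and $n_p \equiv 1 \pmod{p}$. Since these subgroups are assumed not to be normal in~$G$, we have $n_p > 1$, so $n_p \in \{2, 4, 8, 16\}$. The congruence $n_p \equiv 1 \pmod p$ excludes $n_p = 2$ (there is no prime~$p$ with $p \mid 1$); forces $p = 3$ when $n_p = 4$, giving $|G| = 48$; forces $p = 7$ when $n_p = 8$, giving $|G| = 112$; and allows $p \in \{3, 5\}$ when $n_p = 16$, giving $|G| \in \{48, 80\}$. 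Consequently $|G| \in \{48, 80, 112\}$.

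Each of these three orders is then handled in its own subsection: \cref{pNotNormalSect-48} treats $|G| = 48$, \cref{pNotNormalSect-80} treats $|G| = 80$, and \cref{pNotNormalSect-112} treats $|G| = 112$. Within each order the strategy is to enumerate (up to isomorphism) the finitely many groups~$G$ whose Sylow $p$-subgroup is not normal, and for each generating set~$S$ exhibit a hamiltonian cycle in $\Cay(G;S)$. The main workhorses are the results collected in \cref{prelim-hamcyc}: when $G'$ is a cyclic $p$-group, \cref{KeatingWitte} settles matters immediately; when the generating set has the semidirect-product form of \cref{AlspachSemiProd}, that theorem applies directly; and in most other situations one seeks a cyclic normal subgroup~$N$ of prime-power order to which one of \cref{FGL(gen-twice),FGL(order2),NormalEasy,CyclicNormal2p,MultiDouble} can be applied, invoking~\pref{kp} via \cref{QuotientIs4pOr8p} to supply the hamiltonian cycle or path needed in the smaller Cayley graph on $G/N$.

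The main obstacle will be the generating sets of size~$2$, where the lifting lemmas are hardest to invoke: either no convenient cyclic normal subgroup of prime-power order exists, or the induced hamiltonian cycle in $\Cay(G/N;S)$ fails to use an edge with the required label. For these residual configurations I expect one has to construct hamiltonian cycles by hand --- for example by combining \cref{CxLHamCyc} with a hamiltonian path in a suitable quotient, or by exhibiting explicit edge-sequences for the remaining group/generating-set pairs. The amount of casework is kept under control by the fact that there are only a handful of isomorphism types of groups of each of the three orders $48$, $80$, $112$, and among these only those with $G'$ not a cyclic $p$-group and not readily handled by a direct quotient argument require such bespoke analysis.
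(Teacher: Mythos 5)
Your proposal is correct and takes essentially the same route as the paper: the Sylow-counting argument reducing to $|G| \in \{48, 80, 112\}$ is exactly the paper's \cref{NotNormal->48or80or112}, and the remainder of the proof is, as you say, delegated to the three subsections treating each order (with the order-$48$ case cited to an external case analysis, and the orders $80$ and $112$ handled by first pinning down the group structure and then combining the lifting lemmas of \cref{prelim-hamcyc} with explicit hamiltonian cycles, largely via \cref{MultiDouble} on quotient multigraphs).
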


We first note that there are only three possibilities for the order of~$G$:

\begin{lem} \label{NotNormal->48or80or112}
If\/ $|G| = 16p$, where $p$~is prime, and the Sylow $p$-subgroups of~$G$ are not normal, then $p \in \{3,5,7\}$, so $|G| \in \{48, 80, 112\}$.
\end{lem}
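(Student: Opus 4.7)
The plan is a direct application of the Sylow theorems. Let $n_p$ denote the number of Sylow $p$-subgroups of $G$. Since these subgroups are assumed not to be normal, we have $n_p > 1$. Sylow's theorems tell us that $n_p$ divides $[G : P] = 16$ for any Sylow $p$-subgroup $P$, and that $n_p \equiv 1 \pmod p$.

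First I would dispose of the case $p = 2$: then $|G| = 32$ is a $2$-group, so the unique Sylow $2$-subgroup is $G$ itself, contradicting non-normality. Hence $p$ is an odd prime, and $\gcd(p, 16) = 1$, so $n_p$ is one of $2, 4, 8, 16$.

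I would then check each of these four possibilities against the congruence $n_p \equiv 1 \pmod p$:
\begin{itemize}
\item If $n_p = 2$, then $p \mid 1$, impossible.
\item If $n_p = 4$, then $p \mid 3$, forcing $p = 3$.
\item If $n_p = 8$, then $p \mid 7$, forcing $p = 7$.
\item If $n_p = 16$, then $p \mid 15$, forcing $p \in \{3, 5\}$.
\end{itemize}
Therefore $p \in \{3, 5, 7\}$, and correspondingly $|G| = 16p \in \{48, 80, 112\}$.

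There is no real obstacle here; the argument is essentially a list of divisibility checks. The only care needed is to rule out $p = 2$ at the start, and to remember that Sylow's theorem gives $n_p \mid [G:P]$ (not just $n_p \mid |G|$), which is what pares the list of divisors of $16p$ down to divisors of $16$ when $p$ is odd.
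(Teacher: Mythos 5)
Your proposal is correct and is essentially the paper's own argument: both use Sylow's theorem to get $n_p \mid 16$ and $n_p \equiv 1 \pmod p$, then read off $p \in \{3,5,7\}$ from the prime divisors of $4-1$, $8-1$, and $16-1$. Your explicit disposal of $p=2$ is a small tidy addition but does not change the approach.
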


\begin{proof}
By Sylow's Theorem \cite[Thm.~15.7, p.~230]{Judson-AlgText}, we know that the number of Sylow $p$-subgroups is a divisor of~$16$, and is congruent to~$1$, modulo~$p$. Since the only prime divisors of $2 - 1 = 1$, $4 - 1 = 3$, $8 - 1 = 7$, and $16 - 1 = 15 = 3 \times 5$ are $3$, $5$, and~$7$, this implies there is only one Sylow $p$-subgroup (which is normal) unless $p \in \{3,5,7\}$.
\end{proof}

We now list the nine nonabelian groups of order $16$, The list will be used repeatedly in the remainder of the paper, because each of these nine groups arises as the Sylow $2$-subgroup of a group of order~$16p$.

\begin{prop}[{}{\cite[\S118, p.~146]{Burnside-ThyGrps}}] \label{ListOrder16}
There are $9$ nonabelian groups of order~$16$:
	\begin{enumerate} \itemsep=\medskipamount
	
	\item \label{ListOrder16-Z2xZ2}
$3$ groups with $Q/Q' \iso \integer_2 \times \integer_2$: 
		\begin{enumerate}\itemsep=\smallskipamount \smallskip
		\item $D_{16}$ {\upshape(}\!\!``dihedral''{\upshape)}, 
		\item $Q_{16}$ {\upshape(}\!\!``generalized quaternion''{\upshape)}, 
		and
		\item $\integer_2 \ltimes \integer_8 = \langle x \rangle \ltimes \langle y \rangle$ with $x^{-1} y x = y^3$ {\upshape(}\!\!``semidihedral'' or ``quasidihedral''{\upshape)}.
		\end{enumerate}
		
	\item \label{ListOrder16-Z4xZ2}
	$3$ groups with $Q/Q' \iso \integer_4 \times \integer_2$:
		\begin{enumerate} \itemsep=\smallskipamount \smallskip
		\item  \label{ListOrder16-Z4xZ2-Z2xZ8}
$\integer_2 \ltimes \integer_8 = \langle x \rangle \ltimes \langle y \rangle$ with $x^{-1} y x = y^5$,
		\item  \label{ListOrder16-Z4xZ2-Z4xZ4}
$\integer_4 \ltimes \integer_4 = \langle x \rangle \ltimes \langle y \rangle$ with $x^{-1} y x = y^{-1}$, 
		and
		\item  \label{ListOrder16-Z4xZ2-Z4x(Z2xZ2)}
$\integer_4 \ltimes (\integer_2 \times \integer_2) = \langle x \rangle \ltimes \langle y,z \rangle$ with $x^{-1} y x = y z$ and $x^{-1} z x = z$.
		\end{enumerate}
		
	\item \label{ListOrder16-Z2xZ2xZ2}
	$3$ groups with $Q/Q' \iso \integer_2 \times \integer_2 \times \integer_2$:
		\begin{enumerate} \itemsep=\smallskipamount \smallskip
		\item \label{ListOrder16-Z2xZ2xZ2-D8xZ2}
$D_8 \times \integer_2 = \langle f, t \mid f^2 = t^4 = (ft)^2 = e \rangle \times \langle z \rangle$,
		\item \label{ListOrder16-Z2xZ2xZ2-Q8xZ2}
$Q_8 \times \integer_2$,
		and
		\item \label{ListOrder16-Z2xZ2xZ2-Z2x(Z2xZ4)}
$\integer_2 \ltimes (\integer_2 \times \integer_4) = \langle x \rangle \ltimes \langle y, z \rangle$ with $x^{-1} y x = yz^2$ and $x^{-1} zx = z$.
		\end{enumerate}
	\end{enumerate}
\end{prop}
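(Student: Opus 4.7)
The plan is to restrict the possibilities for $Q/Q'$ first, then enumerate the nonabelian groups realizing each allowed abelianization. A non-cyclic $p$-group cannot have cyclic abelianization: since $Q'\subseteq\Phi(Q)$, any lift of a generator of $Q/Q'$ generates $Q$ modulo $\Phi(Q)$ and hence generates $Q$ by the Burnside basis theorem. Since $Q$ is nonabelian of order~$16$, we have $2\le|Q'|\le 8$, so $|Q/Q'|\in\{4,8\}$ after excluding the cyclic options, giving exactly the three abelianizations listed.

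For case~(\ref{ListOrder16-Z2xZ2}), where $|Q'|=4$: a brief commutator-form check rules out nilpotency class~$2$ (an alternating bilinear form on $\integer_2^2$ has image of order at most~$2$, whereas class~$2$ would force the commutator map $Q/Z(Q)\times Q/Z(Q)\to Q'$ to be surjective onto a group of order~$4$), so $Q$ has nilpotency class~$3$, the maximum possible for order~$16$. I would then invoke the classical classification of $2$-groups of maximal class \cite[Thm.~5.4.5]{Gorenstein-FinGrps}: for $n\ge 4$, the $2$-groups of order~$2^n$ of maximal class are exactly the dihedral, generalized quaternion, and semidihedral groups.

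For cases~(\ref{ListOrder16-Z4xZ2}) and~(\ref{ListOrder16-Z2xZ2xZ2}), where $|Q'|=2$: nilpotency forces $[Q,Q']\subsetneq Q'$, so $Q'\subseteq Z(Q)$, and $Q$ is a central extension of $\integer_2$ by $Q/Q'$. I would choose lifts of a minimal generating set of $Q/Q'$ and parameterize the extension by (i)~the commutators of pairs of lifts and (ii)~the squares of the individual lifts, all of which lie in the central subgroup $Q'\iso\integer_2$. Nonabelianness forces some commutator to be nontrivial; after normalizing the commutator data by a change of basis on $Q/Q'$ one enumerates the squaring data modulo the remaining basis changes. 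The delicate point---and the expected main obstacle---is case~(\ref{ListOrder16-Z2xZ2xZ2}): the alternating commutator form on $\integer_2^3$ has rank at most~$2$, so generically one splits off a central $\integer_2$ factor to recover $D_8\times\integer_2$ or $Q_8\times\integer_2$, but when the squaring map obstructs this splitting the third group $\integer_2\ltimes(\integer_2\times\integer_4)$ appears; the careful bookkeeping of how residual basis changes interact with the squaring map is where most of the work lies. Alternatively, one simply appeals to \cite[\S118]{Burnside-ThyGrps}.
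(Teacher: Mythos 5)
The paper offers no proof of this proposition at all: its entire justification is the citation to Burnside, which is precisely the fallback you give in your final sentence, so in that minimal sense you have matched the paper. Everything before that sentence is additional work the paper does not attempt, and it is a sound route. Your reduction to the three listed abelianizations is correct (a noncyclic $p$-group has noncyclic abelianization, and $|Q/Q'|=2$ is impossible), and your treatment of case~\pref{ListOrder16-Z2xZ2} is essentially complete: if $|Q'|=4$ and $Q$ had class~$2$, then $Z(Q)=\Phi(Q)=Q'$ has order~$4$, so $Q=\langle a,b\rangle$ is $2$-generated with $a^2\in Z(Q)$, whence $[a,b]^2=[a^2,b]=e$ and $Q'=\langle[a,b]\rangle$ has order at most~$2$ --- this is the content of your ``alternating form'' remark --- so $Q$ has maximal class and the classical classification of maximal-class $2$-groups gives exactly the dihedral, generalized quaternion, and semidihedral groups. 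The one substantive shortfall is that for cases~\pref{ListOrder16-Z4xZ2} and~\pref{ListOrder16-Z2xZ2xZ2} you describe the central-extension bookkeeping (normalizing the commutator form and the squaring data on $Q/Q'$ with values in the central $Q'\iso\integer_2$) but do not carry it out, so the assertion that \emph{exactly} three groups arise in each of those cases --- no more and no fewer --- is not actually established by your sketch; that enumeration is where the real content of the classification lives, as you yourself note. Since the paper settles for the citation, this leaves you no worse off than the paper, but the sketch does not yet stand on its own without either executing that enumeration or retaining the appeal to Burnside.
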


The three possible orders of~$G$ are discussed individually, in \cref{48NotNormal,80NotNormal,112NotNormal} below.

\subsection{Groups of order~$48$} \label{pNotNormalSect-48}

\begin{prop}[{}{\cite{CurranMorris-48}}] \label{48NotNormal}
If\/ $|G| = 48$, and the Sylow $3$-subgroups of~$G$ are not normal, then every connected Cayley graph on~$G$ has a hamiltonian cycle.
\end{prop}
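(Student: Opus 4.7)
The plan is to enumerate the groups $G$ of order $48$ whose Sylow $3$-subgroups are not normal, and then to verify the hamiltonicity conclusion case-by-case using the machinery of \cref{prelim-hamcyc}. By Sylow's Theorem, the number $n_3$ of Sylow $3$-subgroups divides $16$ and is $\equiv 1 \pmod 3$, so $n_3 \in \{4, 16\}$. If $n_3 = 16$, then $G$ contains $32$ elements of order~$3$, forcing the Sylow $2$-subgroup to be normal (and, by counting, elementary abelian or otherwise constrained). If $n_3 = 4$, then conjugation on the Sylow $3$-subgroups gives a homomorphism $G \to S_4$ with transitive image, and the kernel analysis pins down the possibilities: $S_4 \times \integer_2$, $\GL(2,3)$, the binary octahedral group, and various semidirect products built over $A_4$ or $\SL(2,3)$, together with a handful of direct products like $A_4 \times \integer_4$ and $A_4 \times (\integer_2 \times \integer_2)$.

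For each such group $G$ and each minimal generating set $S$, I would look for the strongest available tool. Whenever $G$ has a cyclic commutator subgroup that is a $p$-group, \cref{KeatingWitte} finishes the job immediately. Otherwise, the workhorse is \cref{FGL}: pick a cyclic normal subgroup $N$ (typically sitting inside $Z(G)$ or in a chief factor), use \pref{kp} via \cref{QuotientIs4pOr8p} to get a hamiltonian cycle in $\Cay(G/N;S)$ of order $48/|N| \in \{24, 16, 12, 8, \ldots\}$, and compute its voltage; when the voltage fails to generate $N$ we reroute using \cref{FGL(gen-twice)} or \cref{FGL(order2)}, which is precisely why \cref{QuotientIs4pOr8p} is invoked. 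If the generating set contains two elements $s,t$ with $G = \langle s\rangle \ltimes \langle t\rangle$, \cref{AlspachSemiProd} applies directly; if $S$ contains a central-or-inverted abelian block $X$, then \cref{CxLHamCyc} reduces to finding a hamiltonian path in $\Cay(G/\langle X\rangle; S)$.

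The main obstacle is the small handful of groups where $Z(G)$ has order~$2$ and $G'$ is non-cyclic -- notably $\GL(2,3)$ and the binary octahedral group $2O$. Here the natural choices of cyclic normal $N$ are very limited (essentially $N = Z(G)$), so the Factor Group Lemma demands a hamiltonian cycle in $\Cay(G/Z(G); S) \iso \Cay(S_4;\overline{S})$ whose voltage is the unique nontrivial element of $Z(G)$; one must either exhibit such a cycle explicitly or, failing that, pass to the quotient multigraph and invoke \cref{MultiDouble} on a double-edge. A secondary difficulty arises in groups like $A_4 \times \integer_4$: the center is large ($\integer_4$) but only acts usefully when a generator has a nontrivial $\integer_4$-component, so certain generating sets supported in $A_4 \times \{0\}$-heavy directions require careful voltage tracking.

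In all cases, since the orders $|G/N|$ encountered lie strictly below $48$ and avoid the exceptional values $\{16, 24, 27, 30\}$ multiplied by a prime, the inductive statement \pref{kp} supplies the required quotient-level hamiltonian cycles. Thus the algorithm -- classify, then for each pair $(G, S)$ invoke the strongest applicable result from \cref{prelim-hamcyc} -- is expected to terminate, and the full case analysis is carried out in detail in \cite{CurranMorris-48}.
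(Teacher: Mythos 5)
The paper gives no actual proof of this proposition: it only remarks that a computer search finds the required hamiltonian cycles, or that a $15$-page case-by-case argument exists, and defers all details to \cite{CurranMorris-48}. Your proposal takes essentially the same approach --- classify the groups of order $48$ with non-normal Sylow $3$-subgroups, then apply the lemmas of \cref{prelim-hamcyc} case by case, with the actual verification outsourced to the same reference --- so it matches the paper, though (like the paper's own comment) it is an outline rather than a self-contained argument.
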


\begin{proof}[Comments on the proof]
A computer search can find hamiltonian cycles in all of these Cayley graphs fairly quickly.
Alternatively, a proof can be written by hand, but, unfortunately, our presentation of this \cite{CurranMorris-48} is an unilluminating, $15$-page case-by-case analysis, so we omit the details. 
\end{proof}

It would be interesting to have a conceptual proof of \cref{48NotNormal}, or, failing that, a human-readable proof of only 2 or 3 pages.

\subsection{Groups of order $80$} \label{pNotNormalSect-80}

\begin{prop} \label{80NotNormal}
If\/ $|G| = 80$, and the Sylow\/ $5$-subgroups of~$G$ are not normal, then every connected Cayley graph on~$G$ has a hamiltonian cycle.
\end{prop}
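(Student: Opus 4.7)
My plan is to first pin down the isomorphism type of~$G$, then carry out a short case analysis based on the generating set. By Sylow's theorem, the number of Sylow 5-subgroups divides 16 and is $\equiv 1 \pmod 5$; non-normality forces this number to be 16, which accounts for $16 \cdot 4 = 64$ elements of order~5, leaving the remaining 16 elements to form the unique (hence normal) Sylow 2-subgroup~$P$. Since $Q \iso \integer_5$ must act nontrivially on~$P$, we need $5 \mid |\mathrm{Aut}(P)|$. A check of \cref{ListOrder16} together with the abelian groups of order 16 shows that only $P = (\integer_2)^4$ has this property, as $|\mathrm{GL}(4,2)| = 20160$ is the only order-16 automorphism group divisible by~5. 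The action of $Q$ on $P$ is then necessarily irreducible (since 5 does not divide $2^k - 1$ for $k < 4$), so $G \iso \mathbb{F}_{16} \rtimes \integer_5$ is the unique Frobenius group of this shape. Its key structural features are: $G' = P$, $Z(G) = 1$, the only normal subgroups are $\{e\}, P, G$, and every element of $G \smallsetminus P$ has order~5 (since $1 + q + q^2 + q^3 + q^4 = 0$ as a linear operator on~$P$).

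Since any generating set contains a minimal generating subset and enlarging the generating set preserves the existence of a hamiltonian cycle, it suffices to treat minimal generating sets~$S$. For any $s \in S \smallsetminus P$ and any involution $t \in S \cap P$, the $Q$-orbit $\{t, t^s, \ldots, t^{s^4}\}$ spans~$P$ by irreducibility, so $\{s, t\}$ already generates~$G$. Similarly, any two order-5 elements in distinct Sylow 5-subgroups generate~$G$. Hence every minimal generating set has size~2 and is of one of two types: (a) $S = \{s, t\}$ with $s$ of order~5 and $t$ an involution in~$P$, or (b) $S = \{s_1, s_2\}$ with both of order~5, lying in distinct Sylow 5-subgroups.

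If $S$ lies in a single (necessarily non-identity) coset of~$P$, then $s t^{-1} \in P$ for all $s, t \in S$, and \cref{pkSubgrp} applies with $N = P$ to yield a hamiltonian cycle. This disposes of type~(b) when $\bar s_1 = \bar s_2$ in $G/P$, leaving type~(a) and type~(b) with $\bar s_1 \neq \bar s_2$.

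In these remaining cases no nontrivial proper normal subgroup of~$G$ is cyclic, so no Factor-Group-Lemma-style tool applies directly and a hamiltonian cycle must be built explicitly. For type~(a), $\Cay(G;S)$ is 3-regular: the $s$-edges form 16 vertex-disjoint 5-cycles indexed by $v \in \mathbb{F}_{16}$, and the $t$-edges form a perfect matching pairing $(v, q^i)$ with $(v + \alpha^i t_0, q^i)$, where $\alpha \in \mathbb{F}_{16}^*$ is a primitive 5th root of unity describing the conjugation action of~$s$ on~$P$ and $t_0 \in \mathbb{F}_{16}$ is the $P$-component of~$t$. Building a hamiltonian cycle then reduces to finding one in the abelian Cayley graph $\Cay(\mathbb{F}_{16}; \{\alpha^0, \alpha^1, \alpha^2, \alpha^3, \alpha^4\})$ whose consecutive edge-labels $i_0, \ldots, i_{15}$ satisfy $i_{j+1} - i_j \equiv \pm 1 \pmod 5$, so that within each visited 5-cycle one can traverse a Hamilton path from the entry vertex to the exit vertex. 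Type~(b) admits a parallel reduction using two overlaid 5-cycle decompositions. The main obstacle is exhibiting such labeled hamiltonian cycles; this is a finite combinatorial check on a fixed 5-regular graph of order 16, to be handled either by an explicit Gray-code-like construction or, as in the proof of \cref{48NotNormal}, by a brief computer search.
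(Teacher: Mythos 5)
Your structural analysis is sound and runs parallel to the paper's: the same Sylow count forces a normal Sylow $2$-subgroup, the group is pinned down as $\integer_5\ltimes(\integer_2)^4$ with irreducible action (the paper gets this more cleanly via the fact that an action trivial on $Q/\Phi(Q)$ is trivial on $Q$, rather than by surveying automorphism-group orders of all fourteen groups of order $16$, but your claim is correct), minimal generating sets have size $2$, and the case where the two generators lie in the same coset of $(\integer_2)^4$ is dispatched by \cref{pkSubgrp}. Up to inversion and conjugation this leaves exactly the paper's two cases, $S=\{x,v\}$ and $S=\{x,x^2v\}$.

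The genuine gap is that you never finish either of these cases. Your last paragraph sets up a reduction (contract the sixteen $\langle s\rangle$-coset $5$-cycles, look for a hamiltonian cycle in $\Cay\bigl(\mathbb{F}_{16};\{\alpha^0,\dots,\alpha^4\}\bigr)$ with consecutive edge labels differing by $\pm1$ modulo $5$) and then explicitly defers the existence of such a cycle to ``an explicit Gray-code-like construction or \dots a brief computer search.'' That existence claim \emph{is} the content of the proposition at this point; nothing in the structural reduction guarantees it, and the reduction itself is only a sufficient condition (not every hamiltonian cycle traverses cosets in blocks), so failure of the labelled search would not even be conclusive. The paper closes this by exhibiting the cycles outright: for $S=\{x,v\}$ an explicit $80$-edge hamiltonian cycle, verified by listing all vertices; for $S=\{x,x^2v\}$ a hamiltonian cycle in the quotient multigraph $P\backslash\Cay(G;S)$ (quotient by the Sylow $5$-subgroup, the opposite quotient from the one you propose) that uses a double edge, so that \cref{MultiDouble} lifts it. Your one-sentence treatment of type (b) via ``two overlaid $5$-cycle decompositions'' is similarly a plan rather than a proof. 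Until the explicit cycles (or a completed search) are supplied, the argument is incomplete precisely where the real work lies.
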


\begin{proof}
From Sylow's Theorem (and the observation that $16$ is the only nontrivial divisor of~$16$ that is congruent to~$1$ modulo~$5$), we know there are $16$ Sylow $5$-subgroups. These contain $16 \times 4 = 64 = |G| - 16$ nonidentity elements of~$G$, so the Sylow $2$-subgroup must be normal. Therefore $G = \integer_5 \ltimes Q$, where $Q$ is the Sylow $2$-subgroup. Since $\integer_5 \notnormal G$, we know the action on~$Q$ is nontrivial. 

We claim $G$ is isomorphic to a semidirect product $\integer_5 \ltimes (\integer_2)^4$. If not, then $Q$ is not elementary abelian, so $Q/\Phi(Q)$
has order $2$, $4$, or~$8$. Since groups of order $2$, $4$, or~$8$ have no automorphisms of order~$5$, this implies that $\integer_5$ acts trivially on $Q/\Phi(Q)$. Therefore $\integer_5$ acts trivially on~$Q$ \cite[Thm.~5.3.5]{Gorenstein-FinGrps}. This is a contradiction.

Now let $S$ be a minimal generating set for~$G$.
Then $S$ must contain an element~$x$ that generates $G/(\integer_2)^4$. Then $|x| = 5$, so, by passing to a conjugate, we may assume $\langle x \rangle = P$. Also, since $|x| = 5$, we know that $x$~acts on $(\integer_2)^4$ via a linear transformation whose minimal polynomial is $\lambda^4 + \lambda ^3 + \lambda ^2 + \lambda + 1$. Therefore, with respect to any basis of the form $\{v, v^x, v^{x^2}, v^{x^3}\}$, 
	$$ \text{$x$~acts via multiplication on the right by the matrix 
	$ A = \text{\smaller[2]$\begin{bmatrix}
	0 & 1 & 0 & 0 \\
	0 & 0 & 1 & 0 \\
	0 & 0 & 0 & 1 \\
	1 & 1 & 1 & 1 
	\end{bmatrix}$}
	$}
	. $$
(This is ``Rational Canonical Form.'')

Let $s$ be another element of~$S$. Then $\langle x, s \rangle$ has nontrivial intersection with $(\integer_2)^4$. Since $\GL_k(2)$ does not have any elements of order~$5$ when $k < 4$, we know that $x$ acts irreducibly, so this implies that $\langle x, s \rangle$ contains all of~$(\integer_2)^4$. Therefore $S = \{x, s\}$ (if $S$ is minimal). Obviously, $s$~is of the form $s = x^i v$, for some $v \in (\integer_2)^4$ and (by passing to the inverse if necessary) we may assume $0 \le i \le 2$.  If $i = 1$, then $x^{-1} s \in (\integer_2)^4$, so \cref{pkSubgrp} applies. Thus, we may assume $i \in \{0,2\}$. So 
	$$ \text{$S = \{x, v\}$ or $S = \{x, x^2v\}$} ,$$
and (by choosing an appropriate basis) $v = (1,0,0,0) \in (\integer_2)^4$.

\setcounter{case}{0}

\begin{case}
Assume $S=\{x,v\}$. 
\end{case}
We claim that a hamiltonian cycle in $\Cay(G;S)$ is given by:
\begin{align*}
&x^4, v, x^{-2}, v, x, v, (x^2,v)^3, (x,v)^2,
x^{-2}, v, x, v, x^{-1}, v, (x^2,v)^2, x^{-2}, v, x, v, x^2, v,\\
&x^{-2}, v, (x^{-1},v)^2, x, v, x^2, v,
x^{-2}, v, x^{-1}, v, x, v, (x^2,v)^2, (x^{-1},v)^3, x^2, v, x, v.
\end{align*}
To verify this, we list the vertices of the cycle, using $a$, $b$, $c$, and $d$ to
denote the generators of $(\integer_2)^4$, where $a=v$, $b=v^{x}$, $c=b^{x}=v^{x^2}$, and $d=c^{x}=v^{x^3}$.
Then $d^x=v^{x^4}=abcd$. 
The hamiltonian cycle visits the vertices of $\Cay(G;S)$ in the order:
\begin{align*}
&e, x, x^2, x^3, x^4, bx^4, bx^3, bx^2, bdx^2, bdx^3,
bcdx^3, bcdx^4, bcd, abcd, abcdx, abcdx^2, abcx^2,  \\
&abcx^3, abcx^4, acx^4, ac, c, cx, abdx, abd, abdx^4, 
adx^4, ad, d, dx^4, bdx^4, bd, bdx, acx, acx^2, \\
&acx^3, ax^3, ax^2, ax, bcdx, bcdx^2, bcx^2, bcx^3, 
bcx^4, cx^4, cx^3, cx^2, cdx^2, cdx, abx, ab, b, bx, \\
&acdx, acdx^2, acdx^3, adx^3, adx^2, adx, bcx, bc, abc, 
abcx, dx, dx^2, dx^3, cdx^3, cdx^4, cd,  \\
&acd, acdx^4, abcdx^4, abcdx^3, abdx^3, abdx^2, 
abx^2, abx^3, abx^4, ax^4, a, e.
\end{align*}

\begin{case}
Assume $S=\{x,x^2v\}$.
\end{case}
A hamiltonian cycle in the quotient multigraph $P \backslash {\Cay(G;S)}$ is given by:
\begin{align*}
x^2v, x^{-1}, (x^2v)^{-1}, x^{-4}, x^2v, x, (x^2v)^{-1}, x^{-1}, (x^2v)^{-1}, x^2, (x^2v)^2.
\end{align*}
Again we use the notation $a=v$, $b=v^{x}$, $c=v^{x^2}$, and $d=v^{x^3}$ to list the vertices in this hamiltonian cycle:
\begin{align*}
P, Pa, Pabcd, Pcd, Pbc, Pab, Pbcd, Pabc, Pb, Pc, Pad, Pabd, Pacd, Pac, Pbd, Pd, P
. \end{align*}
The edge from $Pac$ to $Pbd$ is a double edge, coming from both $x$ and $x^2v$, so \cref{MultiDouble} provides a hamiltonian cycle in $\Cay(G;S)$.
 \end{proof}

\subsection{Groups of order~$112$} \label{pNotNormalSect-112}

Before finding a hamiltonian cycle in $\Cay(G;S)$, we prove two results that determine the structure of~$G$.

\begin{lem} \label{112->NormalSylow}
If $G$ is any group of order~$112$, then $G$ has a normal Sylow subgroup.
\end{lem}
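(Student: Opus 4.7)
The strategy is to assume, for contradiction, that neither the Sylow $2$-subgroup $P_2$ nor the Sylow $7$-subgroup $P_7$ of~$G$ is normal. Sylow's Theorem then forces $n_7 = 8$ and $n_2 = 7$, so $|N_G(P_7)| = 14$. If $N_G(P_7) \iso \integer_{14}$ (i.e., abelian), then $N_G(P_7) = C_G(P_7)$, and Burnside's normal $p$-complement theorem applied at $p = 7$ provides a normal subgroup of~$G$ of order~$16$, contradicting the assumption on~$P_2$. Hence $N_G(P_7) \iso D_{14}$.

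Next I would analyze the conjugation action of~$G$ on its eight Sylow $7$-subgroups, giving a homomorphism $\rho \colon G \to S_8$. Its kernel~$K$ is normal in~$G$ and contained in $N_G(P_7) \iso D_{14}$, hence is normal in $D_{14}$ as well. The only normal subgroups of $D_{14}$ are $\{e\}$, $P_7$, and $D_{14}$, but the latter two would make $P_7$ (characteristic in~$K$) normal in~$G$, contradicting $n_7=8$. So $K = \{e\}$, and $G$ embeds in~$S_8$ as a transitive subgroup of order~$112$. Because distinct Sylow $7$-subgroups of~$G$ meet trivially, $P_7$ acts freely---hence transitively---on the other seven Sylow $7$-subgroups, so this embedding is $2$-transitive.

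Since $|G| = 2^4 \cdot 7$, $G$ is solvable by Burnside's $p^a q^b$ theorem; and a classical theorem on solvable $2$-transitive groups of prime-power degree produces an elementary abelian normal subgroup $V \iso (\integer_2)^3$ acting regularly, with the point stabilizer $N_G(P_7) \iso D_{14}$ as a complement embedded in $\GL(V) = \GL(3,2)$ and acting transitively on $V \smallsetminus \{0\}$. The final---and main---obstacle is then to verify that $\GL(3,2)$ contains no copy of $D_{14}$. This simple group has order~$168$, and Sylow's Theorem (combined with simplicity) gives $n_7 = 8$ in~$\GL(3,2)$, so its Sylow $7$-normalizer has order~$21$ and must be isomorphic to $\integer_3 \ltimes \integer_7$, which contains no involution. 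Since any $D_{14} \leq \GL(3,2)$ would provide an involution inverting a Sylow $7$-subgroup, no such $D_{14}$ exists, and we reach the desired contradiction.
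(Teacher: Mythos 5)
Your proof is correct, but it reaches the contradiction by a genuinely different route than the paper. The paper starts from a minimal normal subgroup $N$ (necessarily an elementary abelian $2$-group, by solvability), splits into cases according to $|N|$, shows $PN \normal G$ and hence $P \normal G$ unless $|N|=8$, and in that last case invokes Burnside's transfer theorem to produce an element inverting $P$ --- contradicting the fact that an order-$7$ element of $\GL_3(2)$ has an irreducible cubic minimal polynomial whose roots cannot be closed under inversion. You instead let $G$ act on its eight Sylow $7$-subgroups, observe that the action is faithful and $2$-transitive, and appeal to the structure theory of solvable $2$-transitive groups to place $N_G(P_7)\iso D_{14}$ inside $\GL_3(2)$, which is impossible because the Sylow $7$-normalizer of $\GL_3(2)$ has odd order $21$. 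Both arguments bottom out at the same obstruction (no involution of $\GL_3(2)$ inverts an element of order $7$), but you prove it by Sylow counting in the simple group of order $168$, while the paper uses the minimal-polynomial argument; and you replace the paper's elementary minimal-normal-subgroup case analysis with heavier permutation-group machinery (Burnside's normal $p$-complement theorem, plus the Galois--Huppert description of solvable $2$-transitive groups as affine). Two small points you should make explicit: the stabilizer in $P_7$ of another Sylow $7$-subgroup $Q$ is $P_7\cap N_G(Q)$, not $P_7\cap Q$, and it is trivial because $Q$ is the unique Sylow $7$-subgroup of $N_G(Q)$; and the point stabilizer embeds in $\GL(V)$ because $C_G(V)=V$ for a regular abelian normal subgroup of a faithful transitive action. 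Neither is a real gap, but both deserve a sentence.
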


\begin{proof}[Proof by contradiction]
Assume $G$ has no normal Sylow subgroups, and let $P$ be a Sylow $7$-subgroup of~$G$.
Let $N$ be a minimal normal subgroup of~$G$. Since $G$ is solvable (for example, this follows from Burnside's $p^a q^b$ Theorem \cite[Thm.~4.3.3]{Gorenstein-FinGrps}),
$N$~is an elementary abelian normal subgroup of~$G$. Since $P$ is not normal, we must have $|N| = 2^k$ for some~$k$. 

\setcounter{case}{0}

\begin{case}
Assume $k \neq 3$.
\end{case}
We know $k \neq 4$, since the Sylow $2$-subgroups are not normal, so $k \in \{1,2\}$. Furthermore, we know that the Sylow $2$-subgroups of $G/N$ are not normal. 
Observe that $|G/N|$ is either $28$ or $56$. 

We claim that $PN\normal G$. If not, then the Sylow $7$-subgroup of $G/N$ is not normal, so $|G/N|=56$ and $G/N$ has eight Sylow $7$-subgroups. Thus, there are $|G/N|-|QN/N|=56-8=48$ elements of order 7 in $G/N$. So $G/N$ has only one Sylow $2$-subgroup, which must be normal. This contradicts the assumption that $G$ has no normal Sylow subgroups.

Since $|PN| = 7 |N| \in \{14, 28\}$, we know $P$ is normal (hence characteristic) in $PN$, so $P \normal G$. This contradicts the assumption that $G$ has no normal Sylow subgroups.

\begin{case}
Assume $k = 3$.
\end{case}
Then $|G/(PN)| = 2$, so $PN \normal G$. Since $P$ does not centralize~$N$ (otherwise $P$ would be normal in~$G$), it must act on~$N$ via a linear transformation of order~$7$. 

Since there is no normal $7$-complement, we know there is an element of~$G$ that normalizes~$P$, but does not centralize it \cite[Thm.~7.4.3]{Gorenstein-FinGrps}. So some element of~$G$ inverts~$P$, which means that every element of~$P$ is conjugate to its inverse. 

However, if we let 
	\begin{itemize}
	\item $g$ be a generator of~$P$, 
	\item $A$ be the linear transformation induced by~$g$ on the vector space~$(\integer_2)^3$, 
	and
	\item $f(\lambda)$ be the minimal polynomial of~$A$,
	\end{itemize}
then $f(\lambda)$ is an irreducible polynomial of degree~$3$. Since $3$ is odd, the roots of $f(\lambda)$ cannot come in pairs, so there is some root~$\alpha$ of $f(\lambda)$ (in an extension field), such that $\alpha^{-1}$ is not a root of $f(\lambda)$. Therefore $g$ and~$g^{-1}$ do not have the same minimal polynomial, so $g$ is not conjugate to $g^{-1}$ in $\GL_3(2)$. This contradicts the conclusion of the preceding paragraph.
 \end{proof}

\begin{cor} \label{112->Z7x(Z2)3xZ2}
If\/ $|G| = 112$, and $G$ has no normal Sylow\/ $7$-subgroup, then 
	$$G \iso \bigl( \integer_7 \ltimes (\integer_2)^3 \bigr) \times \integer_2 ,$$
where a generator of\/ $\integer_7$ acts via multiplication on the right by the matrix
	$$
	A = 
	\begin{bmatrix}
	0 & 1 & 0 \\
	0 & 0 & 1 \\
	1 & 1 & 0
	\end{bmatrix}
	. $$
\end{cor}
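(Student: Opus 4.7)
The plan is to reduce to analyzing the action of a Sylow $7$-subgroup $P \iso \integer_7$ on the Sylow $2$-subgroup~$Q$.

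By \cref{112->NormalSylow}, $G$ has a normal Sylow subgroup; since $P$ is not normal, $Q$ must be, with $|Q| = 16$. Sylow's Theorem then forces the number of Sylow $7$-subgroups to be exactly~$8$, so $G = P \ltimes Q$ with $P$ acting nontrivially on~$Q$ (otherwise $P$ would be central and hence normal).

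The key step is showing $Q$ is elementary abelian. Since the kernel of $\mathrm{Aut}(Q) \to \mathrm{Aut}(Q/\Phi(Q))$ is a $2$-group \cite[Thm.~5.3.5]{Gorenstein-FinGrps}, the induced action of~$P$ on $Q/\Phi(Q)$ is nontrivial; and as $\GL_k(2)$ has order divisible by~$7$ only for $k \ge 3$, this gives $|Q/\Phi(Q)| \in \{8,16\}$. Assume for contradiction $|Q/\Phi(Q)| = 8$, so $|\Phi(Q)| = 2$. Every nontrivial irreducible $\mathbb{F}_2[\integer_7]$-module has dimension~$3$ (coming from the two irreducible cubic factors of $\lambda^7 - 1$ over~$\mathbb{F}_2$), so a generator $\sigma$ of~$P$ acts irreducibly on the $3$-dimensional space $Q/\Phi(Q)$. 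Two subcases remain. If $Q$ is abelian, then $Q \iso \integer_4 \times (\integer_2)^2$; coprime-order splitting (Maschke/Schur--Zassenhaus) of the $P$-module sequence $0 \to Q[2] \to Q \to Q/Q[2] \to 0$ would produce a $P$-invariant subgroup of~$Q$ of order~$2$ disjoint from~$Q[2]$, which is impossible since every involution of~$Q$ lies in~$Q[2]$. If $Q$ is nonabelian, then $Q' = \Phi(Q)$ has order~$2$ and the commutator map induces a nonzero alternating bilinear form on $Q/Q' \iso (\mathbb{F}_2)^3$ preserved by~$\sigma$; in odd dimension such a form is degenerate, so its radical is a proper, nonzero, $\sigma$-invariant subspace, contradicting irreducibility.

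Thus $Q \iso (\integer_2)^4$. By Maschke, $Q$ decomposes as an $\mathbb{F}_2[P]$-module into irreducibles of dimension $1$ or~$3$, so the only nontrivial decomposition of the $4$-dimensional~$Q$ is $Q = V_3 \oplus V_1$ with $V_1$ trivial. Then $V_1$ is fixed by~$P$, hence central in~$G$, giving $G \iso (\integer_7 \ltimes V_3) \times V_1 \iso \bigl( \integer_7 \ltimes (\integer_2)^3 \bigr) \times \integer_2$. To realize the stated matrix, choose $v \in V_3 \smallsetminus \{0\}$: then $\{v, v^\sigma, v^{\sigma^2}\}$ is a basis of~$V_3$, and the matrix of~$\sigma$ in this basis is the companion matrix of its minimal polynomial, one of the two irreducible cubic divisors $\lambda^3 + \lambda + 1$ or $\lambda^3 + \lambda^2 + 1$ of $\lambda^7 - 1$. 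These two polynomials are reciprocal, so after possibly replacing $\sigma$ by~$\sigma^{-1}$ we obtain exactly the companion matrix~$A$.

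The main obstacle is ruling out $|\Phi(Q)| = 2$. The nonabelian subcase is the delicate one: the alternating-form argument is the cleanest route, and it hinges on the observation that odd-dimensional alternating forms are always degenerate, which prevents~$\sigma$ from acting irreducibly.
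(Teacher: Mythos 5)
Your overall strategy matches the paper's: reduce to $G = P \ltimes Q$ with $Q$ the normal Sylow $2$-subgroup, show $P$ acts nontrivially on $Q/\Phi(Q)$, force $|Q/\Phi(Q)| \ge 8$, and eliminate $|\Phi(Q)| = 2$ by splitting into abelian and nonabelian subcases. Your nonabelian subcase (the commutator form on $Q/Q'$ is a nonzero alternating form, hence degenerate in odd dimension, so its radical $Z(Q)/\Phi(Q)$ is a proper nonzero invariant subspace) is correct, and is an elegant abstract version of the paper's inspection of the three relevant groups of order $16$. Your endgame (cyclic vector, companion matrix, the two reciprocal irreducible cubics) is also fine, and is more explicit than the paper's appeal to the conjugacy of all order-$7$ subgroups of $\GL_4(2)$.

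However, the abelian subcase contains a genuine gap. You invoke ``coprime-order splitting (Maschke/Schur--Zassenhaus)'' to split $0 \to Q[2] \to Q \to Q/Q[2] \to 0$ as $P$-modules, but neither theorem applies: Maschke requires modules over a field (here $Q \iso \integer_4 \times \integer_2 \times \integer_2$ is not a vector space), and Schur--Zassenhaus requires the kernel and quotient to have coprime orders (here both are $2$-groups). The coprimality of $|P| = 7$ with $|Q| = 16$ only yields that a $P$-equivariant splitting exists if and only if a splitting of abelian groups exists --- and no abelian-group splitting exists, precisely because every involution of~$Q$ lies in $Q[2]$. So all you have shown is that the sequence does not split, which is no contradiction. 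The fix is shorter than what you wrote: $Q[2]$ is characteristic in~$Q$, hence $P$-invariant, and $\Phi(Q) = 2Q \subseteq Q[2]$, so $Q[2]/\Phi(Q)$ is a $P$-invariant subgroup of order~$4$ inside $Q/\Phi(Q)$ of order~$8$ --- a proper, nonzero, invariant subspace, contradicting the irreducibility you already established. (This is exactly the paper's argument: the elements of order~$2$ in~$Q$ provide an invariant, proper subspace of $Q/\Phi(Q)$.)
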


\begin{proof}
Let $P$ be a Sylow $7$-subgroup of~$G$. From \cref{112->NormalSylow}, we know that $G$ has a normal Sylow $2$-subgroup~$Q$, so $G = P \ltimes Q$. Since $P \notnormal G$, we know that $P$ acts nontrivially on~$Q$, so it also acts nontrivially on $Q/\Phi(Q)$ \cite[Thm.~5.3.5]{Gorenstein-FinGrps}. 

\setcounter{case}{0}

\begin{case}
Assume $\Phi(Q)$ is trivial.
\end{case}
Then $Q \iso (\integer_2)^4$, and a generator~$x$ of~$P$ acts by a linear transformation. 
Since 
	$$ |\GL_4(2)| = (2^4 - 1)(2^4 - 2)(2^4 - 2^2)(2^4 - 2^3) $$
is not divisible by~$7^2$, we know that all subgroups of order~$7$ in $\GL_4(2)$ are conjugate, so the semidirect product $\integer_7 \ltimes (\integer_2)^4$ is unique. Therefore $G$ must be as described.

\begin{case}
Assume $\Phi(Q)$ is nontrivial.
\end{case}
Since $7 \nmid 2^i - 1$ for $1 \in \{1,2\}$, we must have $|Q/\Phi(Q)| = 2^3$. (So $\Phi(Q) = Q'$ has order~$2$.) Therefore, a generator~$x$ of~$P$ acts transitively on the nonidentity elements of $Q/\Phi(Q)$. 

If $Q$ is nonabelian, then $Q$ is one of the groups listed in \fullcref{ListOrder16}{Z2xZ2xZ2}, since
$Q/Q'\iso (\integer_2)^3$. In each of these groups, $\Phi(Q)$ is a proper subgroup of $Z(Q)$. Thus $Z(Q)/\Phi(Q)$
is a proper subspace of $Q/\Phi(Q)$; a contradiction. 

Therefore $Q$ is abelian. Then we see that every element of~$Q$ has order~$2$, for otherwise the elements of order~$2$ in~$Q$ provide an invariant, proper subspace of $Q/\Phi(Q)$. This contradicts the fact that $\Phi(Q)$ is nontrivial.
\end{proof}

\begin{prop} \label{112NotNormal}
If\/ $|G| = 112$, and the Sylow $7$-subgroups are not normal, then every connected Cayley graph on~$G$ has a hamiltonian cycle.
\end{prop}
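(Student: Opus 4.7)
By \cref{112->Z7x(Z2)3xZ2}, we identify $G = (\langle x \rangle \ltimes V) \times \langle z \rangle$, with $|x|=7$ acting irreducibly on $V = (\integer_2)^3$, and $z$ central of order~$2$. Set $H := \langle x \rangle \ltimes V$ (of order~$56$), with projections $\pi_z \colon G \to H$ and $\pi \colon G \to G/V \cong \integer_{14}$. A brief calculation, using that $(x-1)$ acts invertibly on~$V$, yields the key identity: if $\pi(s)$ has order~$14$ in $\integer_{14}$ (that is, $\pi(s)=x^iz$ for some $i\ne 0$), then $|s|=14$ and $s^7 = z$. Assume $S$ is a minimal generating set of~$G$.

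The plan is a three-case analysis. \textbf{Case 1:} If $z \in S$, apply \fullcref{NormalEasy}{p} with the central prime-order normal subgroup $\langle z\rangle$, supplying the required hamiltonian cycle in $\Cay(G/\langle z\rangle;S)=\Cay(H;\pi_z(S))$ via~\pref{kp} (since $|H|=56=8\cdot 7$). \textbf{Case 2:} If $z \notin S$ and $st^{-1}\in V$ for every $s, t \in S$, apply \cref{pkSubgrp} with the normal $2$-subgroup $N = V$; analogously, apply \cref{pkSubgrp} with $N = V\times\langle z\rangle$ (also a normal $2$-subgroup) whenever $st^{-1}\in V\times\langle z\rangle$ for all $s, t \in S$, i.e.\ when all elements of~$S$ share a common image in $\integer_7 = G/(V\times\langle z\rangle)$.

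\textbf{Case 3 (main case):} Otherwise the images of~$S$ span multiple cosets of both $V$ and $V\times\langle z\rangle$. The plan is to invoke \cref{FGL} with $N=\langle z\rangle$: by~\pref{kp} there is a hamiltonian cycle $C=(s_1,\ldots,s_{56})$ in $\Cay(H;\pi_z(S))$, and since $z\in Z(G)$ the lifted product in~$G$ equals $z^{\mathcal{E}(C)}$, where $\mathcal{E}(C)$ is the parity of the number of edges of~$C$ whose chosen lift in~$S$ lies in $Hz$ (with lifts chosen individually on any edge whose $\pi_z$-image has several lifts in~$S$). The task is to exhibit~$C$ with $\mathcal{E}(C)$ odd. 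When $\{s,sz\}\subseteq S$ for some $s\in H$, \cref{MultiDouble} does the job via the resulting double edge in the multigraph $\Cay(H;\pi_z(S))$. When $S$ contains generators of both types (that is, both $S\cap H$ and $S\cap Hz$ are nonempty), the parity can be arranged to be odd by a careful choice of hamiltonian cycle, with the identity $s^7=z$ (for any order-$14$ element of~$S$) giving control over monodromy along $\pi_z(s)$-orbits.

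\textbf{Main obstacle:} The hardest sub-case is $S\subseteq Hz$ (every generator has nontrivial $z$-component). Here $\mathcal{E}(C)\equiv 56\equiv 0\pmod 2$ always, so \cref{FGL} with $N=\langle z\rangle$ cannot apply directly. The Cayley graph $\Cay(G;S)$ is then bipartite on $H\times\{0\}\sqcup H\times\{1\}$, and an explicit hamiltonian-cycle construction is required for the finite list of minimal generating sets of this shape, paralleling the case-by-case analysis at the end of the proof of~\cref{80NotNormal} (and in practice verifiable by computer search).
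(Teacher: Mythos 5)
Your structural reductions are sound and largely parallel the paper's: the identification $G \iso \bigl( \integer_7 \ltimes (\integer_2)^3 \bigr) \times \integer_2$ via \cref{112->Z7x(Z2)3xZ2}, the use of \cref{pkSubgrp} when all generators agree modulo a normal $2$-subgroup, \cref{NormalEasy} when $z \in S$ (the paper shows this is exactly what happens when $\#S > 2$), and the parity observation that \cref{FGL} with $N = \langle z \rangle$ works precisely when a hamiltonian cycle in $\Cay(G/\langle z\rangle;S)$ can be made to use the $Hz$-type generators an odd number of times. Your ``key identity'' $s^7 = z$ for elements projecting onto a generator of $\integer_{14}$ is also correct.

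However, Case~3 is a plan rather than a proof, and the gap is exactly where the real work lies. The assertion that ``the parity can be arranged to be odd by a careful choice of hamiltonian cycle'' is not justified and is not automatic: the paper has to exhibit an explicit $56$-step cycle for $S = \{x, vz\}$ and count that $vz$ occurs $17$ times. More seriously, the subcase $S \subseteq Hz$ genuinely occurs (e.g.\ $S = \{xz, vz\}$ and $S = \{xz, x^2vz\}$ are minimal generating sets, since $(xz)^7 = z$), and you correctly observe that \cref{FGL} with $N = \langle z \rangle$ provably fails there --- but you then defer to ``an explicit construction \dots\ verifiable by computer search,'' which is precisely the content a proof must supply. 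Note also that \cref{FGL} with $N = P$ is unavailable because $P$ is not normal (that is the standing hypothesis). The paper's device for these cases is \cref{MultiDouble} applied to the quotient multigraph $P \backslash \Cay(G;S)$: since $|P| = 7$, this is a $16$-vertex multigraph in which a hamiltonian cycle through a double edge can be written down by hand; this is a different (and essential) use of \cref{MultiDouble} from the one you propose for the situation $\{s, sz\} \subseteq S$. Finally, your outline never reduces to a finite explicit list of generating sets; the paper does this via rational canonical form, inversion, the reduction $i \ne 1$ from \cref{pkSubgrp}, and an explicit automorphism $\psi$ of~$G$ eliminating $i = 4$, arriving at six two-element sets that are then treated one by one.
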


\begin{proof}
\Cref{112->Z7x(Z2)3xZ2} provides an explicit description of~$G$.
Let 
	\begin{itemize}
	\item $x$ be a generator of $P = \integer_7$,
	\item $v = (1,0,0) \in (\integer_2)^3$, 
	and
	\item $z$ be a generator of $Z(G) \iso \integer_2$.
	\end{itemize}

\setcounter{case}{0}

\begin{case}
Assume $\#S = 2$.
\end{case}
Let $s$ be an element of~$S$ that is not in~$Q$. Replacing $s$ by a conjugate, we may assume $s$ is either $x$ or~$xz$. 

Since $|x| = 7$, we know that the minimal polynomial of~$x$ is a divisor of 
	$$\lambda^6 + \lambda^5 +\lambda^4 + \lambda^3 + \lambda^2 + \lambda + 1 = (\lambda^3 + \lambda + 1)(\lambda^3 + \lambda^2 + 1) \quad \text{(over $\integer_2$)} . $$
So the minimal polynomial of~$x$ is either $\lambda^3 + \lambda + 1$ or $\lambda^3 + \lambda^2 + 1$.
Since (as explained in the proof of \cref{112->NormalSylow}), the minimal polynomials of $x$ and~$x^{-1}$ are not the same, we may assume (by replacing~$x$ with $x^{-1}$ if necessary) that the minimal polynomial of $x$ is $\lambda^3 + \lambda + 1$. Then (for any basis of the form $\{v, v^x, v^{x^2} \}$), $x$~acts on $(\integer_2)^3$ via multiplication on the right by the matrix~$A$ in the statement of \cref{112->Z7x(Z2)3xZ2}. (This is ``Rational Canonical Form.'')

Let $t$ be the other element of~$S$. We have $t = x^i v z^j$ for some $i$ and~$j$, where $v = (1,0,0) \in (\integer_2)^3$.
We may assume $i \in \{0,1,2,4\}$ (by replacing $t$ with its inverse if necessary). 
Consider the basis $\{a,b,c\}$ of $(\integer_2)^3$ where 
	$$ \text{$a=v$, $b=v^{x}$, and $c=b^{x}=v^{x^2}$.} $$
Then $G$ is given by
\begin{align*}
G= \left \langle a,b,c,x,z
	\mathrel{\Bigg|}
	\begin{matrix} a^2=b^2=c^2=x^7=z^2=e,\; a^b=a^c=a^z=a, \\
                            \; b^c=b^z=b, c^z=c,\; a^x=b,\; b^x=c,\; c^x=ab,\; x^z=x
          \end{matrix}
          \right\rangle
\end{align*}

Let $\psi \colon G\to  G$ be the homomorphism defined by 
	$$ \text{$\psi(a)=bc$, $\psi(b)=ac$, $\psi(c)=b$, $\psi(x)=abcxz$, and $\psi(z)=z$.} $$
One can show that the relations of the group are preserved by $\psi$ and that $\psi$ is onto. Thus $\psi$ is an automorphism of~$G$ that sends the pair $(x^4v,x)$ to $(x,x^2v)$; so we may assume $i\ne 4$.

Also, if $i = 1$, then \cref{pkSubgrp} applies. Thus, the generating sets to consider are:
	\begin{itemize}
	\item $S = \{x, vz \}$,
	\item $S = \{x, x^2vz \}$,
	\item $S = \{xz, v \}$,
	\item $S = \{xz, v z \}$,
	\item $S = \{xz, x^2v \}$,
	\item $S = \{xz, x^2v z \}$.
	\end{itemize}
In all cases, $x$ acts on $(\integer_2)^3$ via multiplication on the right by the matrix~$A$, with respect to the basis $\{a,b,c\}$.

\begin{subcase}
Assume $S = \{x, vz \}$.
\end{subcase}
A hamiltonian cycle in $\Cay(G/\langle z \rangle; S)$ is given by:
	\begin{eqnarray*}
	\bigl(x^{-1},vz,x,vz, x^4,vz, x, vz, x^2, vz,x^{-2}, vz, x^{-3}, vz,(x^{-4}, vz)^2,\ \\ x^{-3}, vz,  x, vz, x^2, vz, x^3, vz, x^{-2}, vz, (x, vz)^2, x^{-3}, vz, x^{-1}\bigr)
	\end{eqnarray*}  
To verify this, we list the vertices (according to a coset representative of $\langle z\rangle$) in the order they are visited:
\begin{eqnarray*}
&e, x^6, bx^6, b, ab, abx, abx^2, abx^3, abx^4, x^4, x^5, cx^5, cx^6, c, ac, acx^6, \\
&acx^5, ax^5, ax^4, ax^3, ax^2, bcx^2, bcx, bc,bcx^6, bcx^5, bx^5, bx^4, bx^3, bx^2, bx, abcx, \\
&abc, abcx^6, abcx^5, abx^5, abx^6, ax^6, a, ax, cx, cx^2, cx^3, cx^4, abcx^4, abcx^3, abcx^2, x^2, \\
&x^3, bcx^3, bcx^4, acx^4, acx^3, acx^2, acx, x,  e
. \end{eqnarray*}
Since $z$ is in the center of~$G$, and the generator $vz$ is used an odd number of times in the hamiltonian cycle (specifically, 17 times), the endpoint in~$G$ is~$z$, so \cref{FGL} provides a hamiltonian cycle in $\Cay(G;S)$.

\begin{subcase}
Assume $S = \{x, x^2vz\}$.
\end{subcase}
A hamiltonian cycle in $P \backslash {\Cay(G;S)}$ is:
$$\bigl(x^2vz, x^{-1}, x^2vz, x, (x^2vz, x^{-1})^2, (x^2vz)^{-2},x,x^2vz, x, (x^2vz)^3 \bigr) . $$
It passes through the vertices of the quotient multigraph in the order:
$$P, Paz, Pacz, Pab, Pbc, Pcz, Pbz, Pb, Pa, Pz, Pabc, Pac, Pabz, Pbcz, Pc, Pabcz, P . $$ 
The edge between $Pbz$ and $Pb$ is a double edge, coming from both $x^2vz$ and $(x^2vz)^{-1}$, so  \cref{MultiDouble} provides a hamiltonian cycle in  $\Cay(G;S)$. 

\begin{subcase}
Assume $S = \{xz, v\}$.
\end{subcase}
A hamiltonian cycle in $P \backslash {\Cay(G;S)}$ is:
$$\bigl(xz, v, (xz)^4, v, xz, v, (xz)^{-1}, v, xz, v, (xz)^2, v \bigr) . $$
It passes through the vertices of the quotient multigraph in the order:
$$P, Pz, Paz, Pb, Pcz, Pab, Pbcz, Pabcz, Pac, Pc, Pbz, Pabz, Pbc, Pabc, Pacz, Pa, P .$$ 
The edge between $P$ and $Pz$ is a double edge, coming from both $xz$ and $(xz)^{-1}$, 
so  \cref{MultiDouble} provides a hamiltonian cycle in  $\Cay(G;S)$. 

\begin{subcase}
Assume $S = \{xz, vz\}$.
\end{subcase}
A hamiltonian cycle in $P \backslash {\Cay(G;S)}$ is:
$$\bigl(xz, vz, (xz)^2, vz, (xz)^{-1}, vz, (xz)^{-2}, vz, ((xz)^{-1}, vz)^3 \bigr) . $$
It passes through the vertices of the quotient multigraph in the order:
$$P, Pz, Pa, Pbz, Pc, Pacz, Pabc, Pbcz, Pab, Pcz, Pac, Pabcz, Pbc, Pabz, Pb, Paz, P .$$ 
The edge between $P$ and $Pz$ is a double edge, coming from both $xz$ and $(xz)^{-1}$.

\begin{subcase}
Assume $S = \{xz, x^2v\}$.
\end{subcase}
A hamiltonian cycle in $P \backslash {\Cay(G;S)}$ is:
$$\bigl(xz, x^2v, (xz)^2, (x^2v, xz)^2, (xz)^2, x^2v, xz, x^2v, (xz)^{-2}, (x^2v)^{-1} \bigr) . $$
It passes through the vertices of the quotient multigraph in the order:
$$P, Pz, Paz, Pb, Pcz, Pabcz, Pac, Pab, Pbcz, Pabc, Pacz, Pabz, Pbc, Pc, Pbz, Pa, P . $$ 
The edge between $P$ and $Pz$ is a double edge, coming from both $xz$ and $(xz)^{-1}$, so  \cref{MultiDouble} provides a hamiltonian cycle in  $\Cay(G;S)$. 

\begin{subcase}
Assume $S = \{xz, x^2vz\}$.
\end{subcase}
A hamiltonian cycle in $P \backslash {\Cay(G;S)}$ is:
$$\bigl((xz, x^2vz)^5, (xz)^5, (x^2vz)^{-1} \bigr) . $$
It passes through the vertices of the quotient multigraph in the order:
$$P, Pz, Pa, Pbz, Pb, Pcz, Pabc, Pacz, Pab, Pbcz, Pc, Pabz, Pbc, Pabcz, Pac, Paz, P . $$ 
The edge between $P$ and $Pz$ is a double edge, coming from both $xz$ and $(xz)^{-1}$, so  \cref{MultiDouble} provides a hamiltonian cycle in  $\Cay(G;S)$.

\begin{case}
Assume $\#S > 2$.
\end{case}
Every minimal generating set of $G/Z(G) \iso \integer_7 \ltimes (\integer_2)^3$ has only $2$ elements, so there exist $s,t \in S$, such that $\langle s, t \rangle = \langle x, v \rangle$. We may assume $s = x$. And we have $t = x^i v$.

Since $\langle s,t \rangle$ has index~$2$ in~$G$, we must have $\#S = 3$; let $u$~be the third element of~$S$, so $u= x^j w z$ with $w \in (\integer_2)^3$. 
	\begin{itemize}
	\item Since $\langle x ,  x^j w z \rangle = \langle s , u \rangle \neq G$, we must have $w = e$. So $u = x^j z$.
	\item Then we must have $j = 0$, for otherwise $\langle u \rangle = \langle x, z \rangle \ni x = s$, which contradicts the fact that $S$ is a minimal generating set. 
	\end{itemize}
But then $u = z \in Z(G)$, so \fullcref{NormalEasy}{Z} and  \cref{QuotientIs4pOr8p} apply.
\end{proof}

\section{Preliminaries on groups of order~$16p$} \label{prelim-16p}

Because of \cref{pNotNormal}, we henceforth 
	$$ \text{\bf assume that the Sylow $\boldsymbol{p}$-subgroups of~$\boldsymbol{G}$ are normal} .$$

\begin{notation}
Throughout the remainder of this paper:
	\begin{itemize}
	\item $G$ is a group of order~$16p$, where $p$ is an odd prime,
	\item $P \iso \integer_p$ is a Sylow $p$-subgroup of~$G$ (and $P \normal G$),
	\item $Q$ is a Sylow $2$-subgroup of~$G$, so $|Q| = 16$,
	and
	\item $S$ is a minimal generating set of~$G$.
	\end{itemize}
We wish to show $\Cay(G;S)$ has a hamiltonian cycle.
\end{notation}

We know $P \iso \integer_p$, and the possibilities for~$Q$ are given in \cref{ListOrder16}.

\begin{lem} \label{GLem} We may assume:
	\begin{enumerate}
	\item \label{GLem-QxP}
	$G = Q \ltimes P$,
	\item \label{GLem-Qnonabel}
	$Q$ is nonabelian, and acts nontrivially on~$P$,
	\item \label{GLem-G'=Q'xP}
	$G' = Q' \times P$ is cyclic,
	\end{enumerate}
\end{lem}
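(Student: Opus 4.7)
My plan is to dispatch the three parts in order: (1) is an immediate structural observation, (2) is a short reduction to \cref{KeatingWitte}, and (3) follows from an elementary commutator computation once (1) and (2) are in hand.  For~(1), I note that $P\normal G$, $P\cap Q=\{e\}$ by coprimality of orders, and $|P|\,|Q|=|G|$, so $G=PQ=Q\ltimes P$.

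For~(2), I argue the contrapositive.  If $Q$ is abelian, then $G/P\iso Q$ is abelian, so $G'\subseteq P$ is cyclic of order $1$ or~$p$, and \cref{KeatingWitte} yields a hamiltonian cycle.  If instead $Q$ acts trivially on~$P$, then $G=Q\times P$ and $G'=Q'$; a quick scan of the nine nonabelian groups in \cref{ListOrder16} shows $Q'$ is cyclic (of order $2$ or~$4$) in every case, so \cref{KeatingWitte} again applies.  This case check is the most mechanical step in the proof, but it is short --- and could even be summarized by the well-known fact that every nonabelian group of order~$16$ has cyclic commutator subgroup.

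For~(3), the structure forced by (1) and (2) makes the computation routine.  The conjugation action $Q\to\mathrm{Aut}(P)\iso\integer_{p-1}$ has abelian image, so $Q'$ lies in its kernel and therefore centralizes~$P$.  Because $P$ has prime order and $Q$ acts on it nontrivially, $[Q,P]=P$, hence $P\subseteq G'$.  Since $G/P\iso Q$, we have $G'/P=(G/P)'=Q'$; counting orders together with $Q'\cap P=\{e\}$ then forces $G'=Q'P$, and the direct product decomposition $G'=Q'\times P$ follows from $Q'$ centralizing~$P$.  Finally, $G'$ is cyclic because $Q'$ is cyclic (by the case check in~(2)), $P$ is cyclic, and their orders are coprime.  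The only real obstacle in the whole argument is that single-line verification in~(2); everything else is a short structural computation.
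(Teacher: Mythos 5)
Your proposal is correct and follows essentially the same route as the paper: part (1) is the same one-line observation, and parts (2) and (3) rest on the same two ingredients, namely \cref{KeatingWitte} together with the fact (from \cref{ListOrder16}) that every nonabelian group of order $16$ has cyclic commutator subgroup, plus the observation that $\mathrm{Aut}(P)$ is abelian so $Q'$ centralizes $P$. The only difference is organizational --- the paper rules out $G'=Q'$ and $G'=P$ directly via \cref{KeatingWitte} and reads off (2) as a consequence of $G'=Q'\times P$, whereas you establish (2) first and then derive (3) --- and both orderings are sound.
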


\begin{proof}
\pref{GLem-QxP} Since $P\normal G$, $G=QP$, and $Q\cap P=\{e\}$, we have $G\iso Q\ltimes P$.

(\ref{GLem-Qnonabel}, \ref{GLem-G'=Q'xP})
Since the automorphism group of $P \iso \integer_p$ is abelian, we know $Q'$ centralizes~$P$. So \pref{GLem-QxP} implies that $Q' \le G' \le Q' \times P$. Since all of the groups in \cref{ListOrder16} have a cyclic commutator subgroup, we know $Q'$ is cyclic.
Also, by \cref{KeatingWitte}, we may assume $G'$ is not a cyclic subgroup of prime-power order.
 Thus, we may assume $G' \neq Q'$ and $G' \neq P$. So $G' = Q' \times P$ (and $Q' \neq \{e\}$, so $Q$ is nonabelian).  Since $Q'$ and~$P$ are both cyclic, this implies $G'$~is cyclic. Furthermore, since $P \subset G'$, we know that $G \not\iso Q \times P$, so the action of~$Q$ on~$P$ is nontrivial.
\end{proof}

The following \lcnamecref{G/G'} shows there are three possibilities for $G/G'$; each of these possibilities will be considered individually, in \cref{Z2xZ2Sect,,Z4xZ2Sect,Z2xZ2xZ2Sect}, respectively.

\begin{cor} \label{G/G'}
We may assume $G/G'$ is isomorphic to either $\integer_2 \times \integer_2$, $\integer_4 \times \integer_2$, or $\integer_2 \times \integer_2 \times \integer_2$.
\end{cor}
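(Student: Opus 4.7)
The plan is to use the structural information already extracted in \cref{GLem}. Once we know $G = Q \ltimes P$ with $G' = Q' \times P$, the classification reduces cleanly to reading off the abelianization of the Sylow $2$-subgroup~$Q$ from the list of nonabelian groups of order~$16$.

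More concretely, the first step is to identify $G/G'$ with $Q/Q'$. Since $P \normal G$ and $P \subseteq G'$ by \fullcref{GLem}{G'=Q'xP}, I would form the tower $P \normal G' \normal G$ and apply the third isomorphism theorem to get
	\[ G/G' \iso (G/P) \big/ (G'/P). \]
From \fullcref{GLem}{QxP} the natural projection gives $G/P \iso Q$, and under this isomorphism $G'/P$ corresponds to $Q'$ (since $G' = Q' \times P$ by \fullcref{GLem}{G'=Q'xP}). Therefore $G/G' \iso Q/Q'$.

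The second step is to invoke \fullcref{GLem}{Qnonabel}, which tells us $Q$ is nonabelian, so $Q$ must be one of the nine groups enumerated in \cref{ListOrder16}. That proposition is organized precisely according to the three possibilities for $Q/Q'$, namely $\integer_2 \times \integer_2$, $\integer_4 \times \integer_2$, and $\integer_2 \times \integer_2 \times \integer_2$. Combining this with the isomorphism $G/G' \iso Q/Q'$ from the previous step finishes the proof.

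There is essentially no obstacle here: the work has already been done in \cref{GLem,ListOrder16}, and this corollary is just packaging those results. The only thing to be careful about is checking that the quotient $G'/P$ really does correspond to $Q'$ under $G/P \iso Q$; this is immediate from the direct-product structure $G' = Q' \times P$, but it is worth saying explicitly so the reader sees that both factors of $G/G'$ come from the $2$-part.
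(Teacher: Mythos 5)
Your proposal is correct and follows essentially the same route as the paper: both establish $G/G' \iso Q/Q'$ from the decompositions $G = QP$ and $G' = Q'P$ in \cref{GLem}, and then read off the three possibilities by inspecting \cref{ListOrder16}. Your version merely spells out the third-isomorphism-theorem step that the paper leaves implicit.
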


\begin{proof}
Since $G = QP$ and $G' = Q' P$ \csee{GLem}, we have $G/G' \iso Q/Q'$. Then the desired conclusion follows from inspection of \cref{ListOrder16}.
\end{proof}

\begin{cor} \label{Q'Cor}
We may assume
	\begin{enumerate}
	\item  \label{Q'Cor-normal}
	$Q' \normal G$,
	\item \label{Q'Cor-Phi}
	 $Q' \le \Phi(G)$, 
	and
	\item  \label{Q'Cor-Smin}
	$S$ is a minimal generating set of $G/Q'$.
	\end{enumerate}
\end{cor}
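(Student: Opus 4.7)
The plan is to dispatch the three parts in order: \pref{Q'Cor-normal} is immediate from \cref{GLem}, \pref{Q'Cor-Phi} does the substantive work, and \pref{Q'Cor-Smin} then follows formally from \pref{Q'Cor-Phi} together with the minimality of~$S$.

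For \pref{Q'Cor-normal}, \fullcref{GLem}{G'=Q'xP} tells us that $G' = Q' \times P$ is cyclic. Since $|Q'|$ is a power of~$2$ and $|P| = p$ is odd, $Q'$ is the Sylow $2$-subgroup of the cyclic group~$G'$, hence characteristic in~$G'$. Because $G' \normal G$, this gives $Q' \normal G$.

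For \pref{Q'Cor-Phi}, I would first establish the following Frattini-style claim: any subgroup $H \le G$ with $HQ' = G$ must equal~$G$. Granted this claim, every $q \in Q'$ is a non-generator of~$G$, since for any $T \subseteq G$ with $G = \langle T, q \rangle$, setting $H = \langle T \rangle$ gives $G = \langle T, q \rangle \le \langle T, Q' \rangle = HQ'$, so the claim forces $H = G$. To prove the claim, note that $|Q'|$ is a power of~$2$, so $p \mid |H|$, forcing $H$ to contain the unique Sylow $p$-subgroup~$P$. By Dedekind's modular law (valid since $Q' \le Q$ and $Q' \normal G$),
	$$ Q = Q \cap HQ' = (Q \cap H)\, Q' . $$
Since $Q$ is a $2$-group, $Q' \le \Phi(Q)$, so $Q = (Q \cap H)\, \Phi(Q)$, and the non-generator property of~$\Phi(Q)$ forces $Q = Q \cap H$, i.e., $Q \le H$. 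Therefore $H \supseteq QP = G$.

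Finally, \pref{Q'Cor-Smin} is formal: the image of $S$ in $G/Q'$ certainly generates; if some proper subset $T \subsetneq S$ had image already generating $G/Q'$, then $\langle T \rangle\, Q' = G$, and \pref{Q'Cor-Phi} would force $\langle T \rangle = G$, contradicting the minimality of~$S$. The only non-routine step is the Dedekind/Frattini calculation at the heart of~\pref{Q'Cor-Phi}, and even that is standard finite group theory; the main care required is simply to notice that $Q' \normal G$ (from \pref{Q'Cor-normal}) is what legitimises treating $HQ'$ as a subgroup in the first place.
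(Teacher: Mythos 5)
Your proof is correct, but part \pref{Q'Cor-Phi} takes a genuinely different route from the paper. The paper argues directly with maximal subgroups: it splits into the case $M \supseteq P$ (where $M/P \supseteq \Phi(G/P) = \Phi(Q)P/P \supseteq Q'P/P$) and the case $M \not\supseteq P$ (where $M$ is forced to be a Sylow $2$-subgroup, and every Sylow $2$-subgroup contains the normal $2$-subgroup $Q'$), concluding that every maximal subgroup contains $Q'$. You instead use the non-generator characterization of $\Phi(G)$, proving the claim that $HQ' = G$ forces $H = G$ via $P \le H$, Dedekind's modular law, and $Q' \le \Phi(Q)$. Both arguments ultimately rest on the same two ingredients --- $Q' \normal G$ and $Q' \le \Phi(Q)$ --- but yours packages them into a single Frattini-style calculation rather than a case split over maximal subgroups, and it has the small advantage of delivering the precise statement ``$\langle T\rangle Q' = G \Rightarrow \langle T\rangle = G$'' that part \pref{Q'Cor-Smin} actually uses, making that last step cleaner than the paper's appeal to \pref{Q'Cor-Phi}. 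Your proof of \pref{Q'Cor-normal} (that $Q'$ is the Sylow $2$-subgroup of the cyclic normal subgroup $G'$, hence characteristic in it) is also a valid minor variant of the paper's observation that $Q'$ is normalized by both $P$ and $Q$.
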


\begin{proof}
\pref{Q'Cor-normal} From \fullcref{GLem}{G'=Q'xP}, we know that $Q'$ is normalized by~$P$ (indeed, it is centralized by~$P$). Then, since it is also normalized by~$Q$, it is normalized by $PQ = G$.

\pref{Q'Cor-Phi} Let $M$ be a maximal subgroup of~$G$. 
	\begin{itemize}

	\item If $M$ contains~$P$, then $M/P$ is a maximal subgroup of $G/P$, so $M/P$ contains $\Phi(G/P) = \Phi(Q) P/P \ge Q' P/P$, so $M \ge Q'$. 

	\item If $M$ does not contain~$P$, then $M$~is a $2$-group, so the maximality implies it is a Sylow $2$-subgroup of~$G$. Every Sylow $2$-subgroup (such as~$M$) contains every normal $2$-subgroup (such as~$Q'$), so $M \ge Q'$.

	\end{itemize}
Thus, every maximal subgroup of~$G$ contains $Q'$, so $Q' \le \Phi(G)$.

\pref{Q'Cor-Smin} Since $S$ is a minimal generating set of~$G$, this follows from \pref{Q'Cor-Phi}.
\end{proof}

\begin{cor} \label{G'2gen}
If $G = \langle a , b \rangle$ is $2$-generated, then $G' = \langle [a,b] \rangle$.
\end{cor}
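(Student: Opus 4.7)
The plan is to combine two standard observations. First, for any group $G=\langle a,b\rangle$ generated by two elements, the commutator subgroup $G'$ coincides with the normal closure $\langle[a,b]\rangle^{G}$ of the single commutator $[a,b]$: if $N$ denotes this normal closure, then in the quotient $G/N$ the images of $a$ and~$b$ commute, so $G/N$ is abelian and hence $G'\le N$; the reverse containment $N\le G'$ is immediate because $G'\normal G$ contains~$[a,b]$. This step uses only that $G$ is $2$-generated, not the specific order of~$G$.

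Second, \fullcref{GLem}{G'=Q'xP} has already established that $G'=Q'\times P$ is cyclic. In a cyclic group every subgroup is uniquely determined by its order, and so is characteristic; moreover, a characteristic subgroup of a normal subgroup is again normal. Applying this with $\langle[a,b]\rangle\le G'\normal G$ shows that $\langle[a,b]\rangle$ is itself normal in~$G$, whence the normal closure $\langle[a,b]\rangle^{G}$ collapses to $\langle[a,b]\rangle$.

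Combining the two steps gives $G'=\langle[a,b]\rangle^{G}=\langle[a,b]\rangle$, as required. There is essentially no obstacle here: beyond elementary group theory, the only substantive input is the cyclicity of~$G'$, which was already extracted in \fullcref{GLem}{G'=Q'xP} from the classification in \cref{ListOrder16} of Sylow $2$-subgroups of order~$16$.
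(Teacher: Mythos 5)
Your proposal is correct and follows essentially the same route as the paper: both arguments reduce to the two facts that $G/\langle[a,b]\rangle$ is abelian (since $a$ and~$b$ commute there and generate~$G$) and that $\langle[a,b]\rangle\normal G$ because $G'$ is cyclic by \fullcref{GLem}{G'=Q'xP}. Your detour through the normal closure is only a cosmetic repackaging of the paper's direct argument.
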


\begin{proof}
By \fullcref{GLem}{G'=Q'xP}, $G'$~is cyclic. Since every subgroup of a cyclic, normal subgroup is normal, we know $\langle [a,b] \rangle \normal G$. Since $\langle a,b \rangle = G$, and $a$~commutes with~$b$ in $G/\langle [a,b] \rangle$, we know $G/\langle [a,b] \rangle$ is abelian, so $G' \subset \langle [a,b] \rangle$. The opposite inclusion is obvious.
\end{proof}

\section{\texorpdfstring{The case where $G/G' \iso \integer_2 \times \integer_2$}{The case where G/G' = Z2 x Z2}} \label{Z2xZ2Sect}

\begin{prop}
Assume $|G| = 16p$.
If $G/G' \iso \integer_2 \times \integer_2$, then $\Cay(G;S)$ has a hamiltonian cycle.
\end{prop}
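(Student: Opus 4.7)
The plan is to apply \cref{FGL(order2)} with $N=Q'$, which is cyclic, normal in~$G$ \fullcref{Q'Cor}{normal}, and of prime-power order~$4$. Since $|G/Q'|=4p<32$, \cref{QuotientIs4pOr8p} together with~\pref{kp} provides a hamiltonian cycle in $\Cay(G/Q';S)$ through any prescribed edge; so it suffices to locate $s\in S$ with $s^2\in Q'\smallsetminus\Phi(Q')$ (equivalently, $s^2$ generates~$Q'$).

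By \fullcref{ListOrder16}{Z2xZ2} we may write $Q = \langle x,y\rangle$ with $|y|=8$ and $Q' = \langle y^2\rangle$, uniformly for the three possibilities $D_{16}$, $Q_{16}$, and the semidihedral group; in each case, the elements of~$Q$ of order~$8$ are exactly those in the coset $yQ'$. Since the automorphism group of~$P=\langle z\rangle$ is cyclic, the action of~$Q$ on~$P$ factors through $Q/Q'\iso\integer_2\times\integer_2$, and its kernel~$K$ is an index-$2$ subgroup of~$Q$. For $s = qz^i\in G$, a short calculation gives
\[
s^2 = \begin{cases} q^2 & \text{if $q\notin K$,}\\ q^2 z^{2i} & \text{if $q\in K$,}\end{cases}
\]
so $s^2$ generates~$Q'$ iff $q\in yQ'$ and either $q\notin K$ or $i=0$.

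The remainder of the proof is a case analysis on the minimal generating sets of $G/Q'\iso\integer_2\times D_p$, cross-indexed by $K\in\{\langle y\rangle,\langle x,y^2\rangle,\langle xy,y^2\rangle\}$. In the generic case some $s\in S$ satisfies the criterion above and \cref{FGL(order2)} finishes. In the residual configurations I would switch tools: \cref{AlspachSemiProd} when $\{u,v\}\subseteq S$ with $G = \langle u\rangle\ltimes\langle v\rangle$ (this handles, for instance, $S = \{yz^a, xz^b\}$ with $a\ne 0$ when $Q\in\{D_{16},\text{semidihedral}\}$, since then $G$ is dihedral of order~$16p$); \cref{FGL(order2)} applied to $N = \langle y^4\rangle = Z(G)$ of order~$2$ instead of~$Q'$ (which rescues $Q_{16}$ subcases since $(xz^b)^2 = y^4$ generates~$\langle y^4\rangle$); \cref{FGL(gen-twice)} when two generators lie in the same coset of~$Q'$ so that $\langle s^{-1}t\rangle = Q'$; and, as a last resort, an ad hoc hamiltonian cycle in $Q'\backslash\Cay(G;S)$ that traverses a double edge, lifted via \cref{MultiDouble}.

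The main obstacle is the bookkeeping in the residual configurations: no single uniform lemma handles them all, so one must split by the isomorphism type of~$Q$ and by which cosets of~$Q'$ in~$G/Q'$ are hit by~$S$. The $Q_{16}$ subcase is especially delicate because $Q_{16}$ has only one involution (namely~$y^4$), so when $S$ contains a generator $s$ of order~$8p$ and one $t$ with $t^2 = y^4$, the intersection $\langle s\rangle\cap\langle t\rangle$ is nontrivial and \cref{AlspachSemiProd} is blocked---fortunately the quotient by the central subgroup $\langle y^4\rangle$ rescues this case.
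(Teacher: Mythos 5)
There is a genuine gap: what you have written is a strategy outline, not a proof. Your criterion for when some $s\in S$ has $s^2$ generating $Q'$ (namely $s=qz^i$ with $q\in yQ'$ and either $q\notin K$ or $i=0$) is correct, but a minimal generating set need not contain any such element: for a $2$-element generating set the image of $S$ in $Q/Q'\iso\integer_2\times\integer_2$ consists of two of the three nontrivial cosets, and $yQ'$ need not be among them (e.g.\ $S$ projecting onto $\{xQ',xyQ'\}$), and even when it is, the $K$/$P$-part condition can fail. So the ``residual configurations'' are not a thin exceptional set but the bulk of the problem, and your treatment of them (``switch tools,'' ``as a last resort, an ad hoc hamiltonian cycle\dots'') defers exactly the case analysis in which all the work lies. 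Several of the proposed rescues also need verification you have not supplied: whether $G=\langle u\rangle\ltimes\langle v\rangle$ for $u,v\in S$ depends on which index-$2$ subgroup $K$ is and on the $P$-parts of the generators, and \cref{FGL(gen-twice)} requires $s^{-1}t$ to \emph{generate} $Q'$, not merely lie in $Q'P$.

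You also miss the observation that disposes of half the proposition at once. Since $G'$ is cyclic, if $\#S=2$, say $S=\{a,b\}$, then $(a^{-1},b^{-1},a,b)$ is a hamiltonian cycle in $\Cay(G/G';S)$ with endpoint $[a,b]$, which generates $G'$ by \cref{G'2gen}; \cref{FGL} with $N=G'$ finishes immediately, with no reference to the isomorphism type of $Q$ or to the action on~$P$. The paper then observes that $\#S\le 3$ (because $|G/Q'|=4p$ has only three prime factors and $S$ stays minimal modulo $Q'\le\Phi(G)$), and for $\#S=3$ splits on whether some generator has order divisible by~$p$; after reductions via \cref{FGL(order2)} the hard residue is pinned down to $Q\iso D_{16}$ with all generators of order~$2$, where a single explicit prism cycle suffices. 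I recommend restructuring along these lines: lead with the commutator argument for $\#S=2$, and use your square-generates-$Q'$ criterion only as one tool within an explicitly enumerated analysis of the $3$-generator case.
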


\begin{proof} We proceed via case-by-case analysis.

\setcounter{case}{0}

\begin{case}
Assume $\#S = 2$.
\end{case}
Write $S = \{a,b\}$. Then $(a^{-1},b^{-1},a,b)$ is a hamiltonian cycle in $\Cay(G/G';S)$ whose endpoint in~$G$ is $a^{-1} b^{-1} a b = [a,b]$. This generates~$G'$ \csee{G'2gen}, so \cref{FGL} applies.  

\begin{case}
Assume $\#S \ge 3$.
\end{case}
Since $|G/Q'| = 4p$ is a product of only $3$ primes, \fullcref{Q'Cor}{Smin} implies $\#S \le 3$. Therefore $\#S = 3$; write 
	$$S = \{a,b,c\} .$$

\begin{subcase}
Assume $|c|$ is divisible by~$p$.
\end{subcase}
Since $|G/Q'|$  is a product of only $3$ primes, and $P$ is the unique subgroup of order~$p$ in~$G$, the minimality of~$S$ (and \fullcref{Q'Cor}{Smin}) implies 
	\begin{itemize}
	\item the image of $\langle c \rangle$ in $G/Q'$ has order~$p$,
	and
	\item the image of $\langle a,b \rangle$ in $G/Q'$ has order~$4$.
	\end{itemize}
Thus, $b$ has order~$2$ in $G/Q'$, so $b$ either centralizes $P$ or inverts it: let $\epsilon\in \{\pm 1\}$,
such that $w^b = w^{\epsilon}$ for all $w\in P$. Since $Q'$ is a cyclic group of order~$4$, its only automorphisms are the identity automorphism and the one that inverts every element in~$Q'$. Let $\epsilon'\in  \{\pm 1\}$, such that $u^b=u^{\epsilon'}$ for all $u\in Q'$.
Write $c=uw$ for some $u\in Q'$ and $w\in P$. Then
	$$ \text{$c^b  = u^{\epsilon'} w^{\epsilon} = c^{\epsilon} u^{\epsilon'-\epsilon} \in c^{\epsilon}\Phi(Q')$, \  since \ $\epsilon'-\epsilon\in\{0,\pm 2\}$.} $$
Now 
	$$ ( a^{-1} , c^{-(p-1)} , b^{-1}, c^{\varepsilon(p-1)}, a , c^{-(p-1)} , b, c^{\varepsilon(p-1)} ) $$
is a hamiltonian cycle in $\Cay\bigl( G/Q' ; S \bigr)$
 whose endpoint in $G/\Phi(Q')$ is
	\begin{align*}
	a^{-1}  c^{-(p-1)}  b^{-1} c^{\varepsilon(p-1)} a  c^{-(p-1)}  b c^{\varepsilon(p-1)} 
	= a^{-1}   b^{-1} a   b 
	= [a,b] 
	, \end{align*}
which generates $Q'$ \csee{G'2gen}. So \cref{FGL} provides a hamiltonian cycle in $\Cay(G;S)$.

\begin{subcase}
Assume no element of~$S$ has order divisible by~$p$.
\end{subcase}
This implies that every element of~$S$ is a $2$-element. Also, since $Q / Q'$ is a Sylow $2$-subgroup of $G/Q'$, and $Q/Q' \iso G/G' \iso \integer_2 \times \integer_2$, we know that $G/Q'$ has no elements of order~$4$. Therefore
	$$ \text{every element of~$S$ has order~$2$ in $G/Q'$.} $$
So we may assume 
	every element of~$S$ has order~$2$ in $G/\Phi(Q')$,
for otherwise \cref{FGL(order2)} and  \cref{QuotientIs4pOr8p} apply with $N = Q'$.
Then we may assume 
	$$ \text{every element of~$S$ has order~$2$} , $$
for otherwise \cref{FGL(order2)} and  \cref{QuotientIs4pOr8p} apply with $N = \Phi(Q')$.

Since $G/P \iso Q$ is a $2$-generated $2$-group, we know that all of its minimal generating sets have the same cardinality, so some $2$-element subset of~$S$ generates $G/P$. Since two elements of order~$2$ always generate a dihedral group, we conclude that 
	$$ Q \iso D_{16}=\langle f,t| f^2=t^8=(ft)^2=e\rangle .$$

\begin{subsubcase} \label{NoSCentP}
Assume no element of~$S$ centralizes~$P$.
\end{subsubcase}
Let $\overline{S}$ be the image of~$S$ in 
	$$ G/Q' \iso (\integer_2 \times \integer_2) \ltimes \integer_p \iso D_{4p} .$$
From \fullcref{Q'Cor}{Smin}, we see that $\overline{S}$ is a minimal generating set of~$D_{4p}$.
Also, by the assumption of this \lcnamecref{NoSCentP},
we know that every element of~$\overline{S}$ is a reflection; let $f \in \overline{S}$. 
There are only two proper subgroups of $D_{4p}$ that properly contain $\langle f \rangle$ (because $\integer_2$ and $\integer_p$ are the only nontrivial proper subgroups of the group $\integer_{2p}$ of rotations), so we may assume $\overline{S} = \{f, fx, fy\}$, where $x$ and~$y$ are rotations of order~$2$ and~$p$ in~$D_{4p}$, respectively. Then $\langle fx, fy \rangle = D_{4p}$, which contradicts the minimality of~$\overline{S}$. 

\begin{subsubcase} \label{SHasCentP}
Assume $S$ contains an element that centralizes~$P$.
\end{subsubcase}
Each element of~$S$ must map to a reflection in $G/P \iso Q \iso D_{16}$ (since the elements of~$S$ all have order~$2$ in both $G/P$ and $G/(Q'P)$). Then, by the assumption of this \lcnamecref{SHasCentP}, we know that some reflection centralizes~$P$. Because $Q$ acts nontrivially on $P$, we have
	$$ G = \langle f, t, w \mid f^2 = t^8= w^p = e, \ ftf = t^{-1}, \ fwf = w, \ t^{-1} w t = w^{-1} \rangle .$$
From the assumption of this \lcnamecref{SHasCentP}, we may assume $f \in S$. By the minimality of~$S$, we must have $\langle f, s \rangle = Q$, for some $s \in S$ (after replacing~$Q$ by a conjugate). Since all elements of $S \cap Q$ are reflections, we may assume $s = ft$. To generate $G$ (and map to a reflection in $G/P$), the final element of~$S$ must be of the form $ft^i w^j$, with $p \nmid j$. By replacing $w$ with~$w^j$, we may assume $j = 1$. So the final element of~$S$ is $f t^i w$. Since all elements of~$S$ have order~$2$ in~$G$, it must be the case that $t^i$ inverts~$w$, so $i$~is odd. 
So
	$$ \text{$S$ is of the form $\{f, ft, ft^iw \}$, with $i$ odd.} $$

By \fullcref{Q'Cor}{normal}, we know $Q' \normal G$. 
Observe that the image of~$f$ is central in $G/Q'$, 
and $ft^iw \equiv ftw \pmod{Q'}$ (because $Q' = \langle t^2 \rangle$ and $i$~is odd), so
$$\Cay(G/Q';S)\iso \Cay \bigl( \langle ft,ft^i w\rangle/Q'; 
\{ft,ft^i w\} \bigr)\times \Cay \bigl( \langle f\rangle;\{ f\} \bigr)\iso C_{2p}\times K_2$$
is a prism, which has the natural hamiltonian cycle $\bigl( (ft,ft^i w)^p \#,f \bigr)^2$.
The endpoint in $G$ is 
\begin{align*}
\Bigl( \bigl( (ft)(ft^i w) \bigr)^{p}(ft^i w)^{-1}f \Bigr)^2 &= \Bigl( (t^{i-1}w)^p w^{-1} t^{-i} \Bigr)^2 \\
&= \bigl( t^{(i-1)p-i} w^{-1} \bigr)^2 \\
&= t^{2(i-1)p-2i}.
\end{align*}
Since $i$~is odd, the exponent of~$t$ is congruent to~$2$ modulo~$4$, so the endpoint generates $\langle t^2 \rangle =Q'$. 
Thus, \cref{FGL} provides a hamiltonian cycle in
$\Cay(G;S)$.
\end{proof}

\section{\texorpdfstring{The case where $G/G' \iso \integer_4 \times \integer_2$}{The case where G/G' = Z4 x Z2}} \label{Z4xZ2Sect}

\begin{prop}
Assume $|G| = 16p$.
If $G/G' \iso \integer_4 \times \integer_2 $, then $\Cay(G;S)$ has a hamiltonian cycle.
\end{prop}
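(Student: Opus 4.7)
The strategy is to follow the template of the $\integer_2 \times \integer_2$ case: bound $\#S$ using the structure of $G/Q'$, then split into subcases and apply the appropriate version of the Factor Group Lemma. Since the quotient $(G/Q')/P \iso Q/Q' \iso \integer_4 \times \integer_2$ is $2$-generated, and $G/Q'$ has order $8p$, the minimality of $S$ (via \fullcref{Q'Cor}{Smin}) forces $\#S \in \{2, 3\}$; moreover, \cref{QuotientIs4pOr8p} makes hamiltonian cycles in the intermediate quotients $G/G'$, $G/Q'$, and $G/\langle s \rangle$ available from \pref{kp}.

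In the case $\#S = 2$, write $S = \{a, b\}$. By \cref{G'2gen}, $G' = \langle [a,b] \rangle$ is cyclic of order $2p$. Up to swapping and inverting, either $\bar a$ has order~$4$ in $G/G' \iso \integer_4 \times \integer_2$ and $\bar b$ has order~$2$ outside $\langle \bar a \rangle$, or both $\bar a$ and $\bar b$ have order~$4$; in the second configuration $\overline{a^{-1}b}$ has order~$2$. In each configuration I would write down an explicit hamiltonian cycle in $\Cay(\integer_4 \times \integer_2; \{\bar a, \bar b\})$---for example $(a, a, a, b, a^{-1}, a^{-1}, a^{-1}, b)$ in the first---whose endpoint in $G$ simplifies to an element of the form $[a^{-3}, b^{-1}] \cdot b^2$ or analogous. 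Using that $G'$ is cyclic of order~$2p$ and that conjugation acts trivially on $Q' \iso \integer_2$, this endpoint can be tracked in the $Q'$-component and in the $P$-component separately and shown to generate $G'$, after which \cref{FGL} finishes.

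In the case $\#S = 3$, write $S = \{a, b, c\}$ and split on whether any element has order divisible by~$p$. If some $s \in S$ has nontrivial $P$-part, then, following the pattern of the last subcase in \cref{Z2xZ2Sect}, I would insert appropriately many copies of $s$ (or $s^{\pm(p-1)}$) into a hamiltonian cycle of $\Cay(G/Q'; S)$ supplied by \pref{kp} in order to kill the $P$-discrepancy, leaving an endpoint in $Q'$ that equals a nontrivial commutator and hence generates $Q'$; then \cref{FGL} with $N = Q'$ finishes. If no element of~$S$ has order divisible by~$p$, then $S \subseteq Q$, and two quick reductions narrow the remaining cases: if $s t^{-1} \in Q'$ for all $s, t \in S$, then \cref{pkSubgrp} applies directly (since $Q'$ is a normal $2$-subgroup); and if some generator $s$ satisfies $s^2 \in Q' \smallsetminus \{e\} = Q' \smallsetminus \Phi(Q')$, then \cref{FGL(order2)} finishes. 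What remains is a short list of concrete generating sets inside each of the three semidirect-product structures from \fullcref{ListOrder16}{Z4xZ2}, handled either by exhibiting explicit hamiltonian cycles (as in the proof of \cref{80NotNormal}) or by applying \cref{CxLHamCyc} to a suitable abelian $\langle X \rangle \subseteq \langle S \cap Q\rangle$.

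The main obstacle is the residual portion of the $\#S = 3$ analysis where every generator is a $2$-element and none of the Factor Group Lemma variants, \cref{pkSubgrp}, or \cref{AlspachSemiProd} apply out of the box: the three isomorphism types of~$Q$ with $Q/Q' \iso \integer_4 \times \integer_2$ induce distinct possible actions on~$P$, so distinguishing them and verifying the hypotheses of \cref{CxLHamCyc} (or constructing hamiltonian cycles by hand) in each concrete configuration is likely to require a careful case enumeration analogous to the subcase analysis of \cref{112NotNormal}.
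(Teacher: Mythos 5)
Your outline reproduces the paper's broad architecture (bound $\#S$ via \fullcref{Q'Cor}{Smin}, then apply variants of the Factor Group Lemma), but there is a genuine gap in your $\#S=2$ case. You assert that the endpoint of an explicit hamiltonian cycle in $\Cay(G/G';S)$ ``can be tracked in the $Q'$-component and in the $P$-component separately and shown to generate $G'$.'' That is false in general. Taking $a$ of order~$4$ in $G/G'$ and the cycle $(a^{-3},b^{-1},a^3,b)$, the endpoint is $[a^3,b]=[a,b]^{k^2+k+1}$, where $g^a=g^k$ on~$G'$. The exponent is odd, so the $Q'$-component is fine, but the $P$-component vanishes exactly when $k^2+k+1\equiv 0 \pmod p$, which forces $p=3$ and $a$ to centralize~$P$ (hence all of~$G'$, since $|Q'|=2$); then every such endpoint is a cube in $G'\iso\integer_6$ and generates only~$Q'$. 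No juggling of signs or of the cycle (including your $(a,a,a,b,a^{-1},a^{-1},a^{-1},b)$, whose endpoint $[a^{-3},b^{-1}]b^2$ suffers the same fate once one reduces to $|b|=2$) rescues this. The paper must do substantial extra work here: switch to $(b^{-3},a^{-1},b^3,a)$ when $b$ has order~$4$ in $G/G'$, otherwise reduce to $|b|=2$, show $|a|\in\{12,24\}$, invoke \cref{AlspachSemiProd} when $|a|=24$, identify $Q$ as $\integer_4\ltimes(\integer_2\times\integer_2)$, and finally apply \cref{FGL} with the different normal subgroup $N=\langle a^2\rangle$ of order~$6$. None of this is anticipated in your proposal.

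The $\#S=3$ portion also has problems beyond the admitted residual case analysis. The reduction via \cref{pkSubgrp} with $N=Q'$ is vacuous ($st^{-1}\in Q'$ for all $s,t\in S$ would make $G/Q'$ cyclic, impossible since it contains $Q/Q'\iso\integer_4\times\integer_2$), and the claim that ``$S\subseteq Q$'' when no generator has order divisible by~$p$ is wrong: an element $qw$ with $q\in Q$ inverting $w\in P$ is a $2$-element lying in no fixed Sylow $2$-subgroup containing the others. The paper organizes this case differently --- by whether the third generator $c$ lies in $\Phi(Q)P$, by the isomorphism type of $Q$ from \fullcref{ListOrder16}{Z4xZ2}, and via a Claim that each $s\in S$ either has $s^2\in P$, or lies in $\Phi(Q)P$, or acts on $P$ with order~$4$ --- and the explicit cycles it needs live in $\Cay(G/N;S)$ for cyclic normal subgroups $N\iso\integer_{2p}$ such as $\langle y^4w\rangle$ and $\langle zw\rangle$, not in $\Cay(G/Q';S)$ as your insertion trick would require. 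The deferred ``residual portion'' is in fact where most of the proof lives.
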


\begin{proof} 
We proceed via case-by-case analysis.

\setcounter{case}{0}

\begin{case}
Assume $\#S = 2$.
\end{case}
Let 
	\begin{itemize}
	\item $S = \{a,b\}$, with $a$~of order~$4$ in $G/G'$,
	and
	\item $k \in \integer$ with $g^a = g^k$ for $g \in G'$. 
	\end{itemize}
Then $(a^{-3}, b^{-1}, a^3, b)$ is a hamiltonian cycle in $G/G'$, and its endpoint  in $G$ is
	$$ [a^3, b] 
		= [a,b]^{a^2} [a,b]^{a} [a,b]
		=  [a,b]^{k^2} [a,b]^{k} [a,b]
		=  [a,b]^{k^2 + k + 1}
		 .$$
By \cref{G'2gen}, this generates $G'$ unless $\gcd(k^2 + k + 1, |G'|) > 1$. Since $|G'| = 2p$, and $k^2 + k + 1$ is always odd, this generates $G'$ unless
	$ k^2 + k + 1 \equiv 0 \pmod{p}$, which implies $k^3 \equiv 1 \pmod{p}$. This means that $a^3$ centralizes~$P$. But $a^4 \in G' \le C_G(P)$, so this implies that $a$~centralizes~$P$: therefore $k \equiv 1 \pmod{p}$. Since $ k^2 + k + 1 \equiv 0 \pmod{p}$, we conclude that 
	$$ \text{$p = 3$ and $a$ centralizes~$P$.} $$

From \fullcref{GLem}{Qnonabel} (and the fact that $a$ centralizes~$P$), we know that 
	$$ \text{$b$ does not centralize~$P$.} $$
Since $G'=Q' P$ is cyclic of order~$6$, we know $G'$ has only two automorphisms; namely, the identity
automorphism and the automorphism that inverts~$G'$. Thus $x^b=x^{-1}$ for all $x\in G'$.
If $b$ has order 4 in $G/G'$, then the hamiltonian cycle $(b^{-3},a^{-1},b^3,a)$ in $\Cay(G/G';S)$ has endpoint
	$$ [b^3,a]=[b,a]^{b^2} [b,a]^{b} [b,a] = [b,a] [b,a]^{-1} [b,a]= [b,a] $$
in~$G$, which generates~$G'$. Thus $\Cay(G;S)$ has a hamiltonian cycle by \cref{FGL}. So we may assume that $b$ has order~$2$ in $G/G'$.
Write $b=qw$ for some $q\in Q$ and $w\in P$, where $q^{-1}wq=w^{-1}$. Then $b^2=q^2w^q  w=q^2$.
Hence, the order of~$b$ is not divisible by~$p$, so $b$ is a $2$-element.
Thus, we may assume (after replacing $Q$ by a conjugate) that $b \in Q$. Thus, the order of~$b$ in $G/Q'$ is~$2$. So we may assume $|b| = 2$, for otherwise \cref{FGL(order2)} and  \cref{QuotientIs4pOr8p} apply.

Since $b \in Q$, and $\langle a,b \rangle = G$, we know $a \notin Q$. Since $a$~centralizes~$P$, this implies that $|a|$~is divisible by~$p$ (i.e.,~$3$). But $|a|$ is also a multiple of~$4$ (its order in $G/G'$). So $|a|$ is divisible by~$12$. Since $|G| = 16p = 48$ (and~$G$ is not cyclic), this implies $|a|$ is either $12$ or~$24$. If $|a| = 24$, then $G = \langle b \rangle \ltimes \langle a \rangle$, so \cref{AlspachSemiProd} applies. So we may assume $|a| = 12$.

Now, of the 3 groups listed in \fullcref{ListOrder16}{Z4xZ2}, 
	\begin{itemize}
	\item group~\pref{ListOrder16-Z4xZ2-Z2xZ8} has no generating set without an element of order~$8$, 
	and
	\item group~\pref{ListOrder16-Z4xZ2-Z4xZ4} has no $2$-element generating set with an element of order~$2$.
	\end{itemize}
So $Q$ must be group~\pref{ListOrder16-Z4xZ2-Z4x(Z2xZ2)},
and we may assume $a = xw$ (by relabeling the elements of~$Q$).

Since $\langle a,b \rangle = G$, we know $\langle x, b \rangle = Q$, so (since $|b| = 2$), we may assume $b = y$ (by further relabeling the elements of~$Q$).
Note that, since $z\in Z(G)$, we know $z$~centralizes both $a$ and~$b$. 

Let $N=\langle  a^2 \rangle = \langle x^2, w \rangle$, and consider the hamiltonian cycle $\bigl( (b,a)^4\#,a^{-1} \bigr)$ in $\Cay(G/N;S)$, which passes through the vertices in the following order:
	$$ N, Ny, Nxyz, Nxz, Nz, Nyz, Nxy, Nx, N. $$
Its endpoint in~$G$ is 
	$$(ba)^4a^{-2} = (yxw)^4 a^{-2} = e \cdot a^{-2} =a^{-2} , $$
which generates $\langle a^2\rangle = N$. So \cref{FGL} provides a hamiltonian cycle in $\Cay(G;S)$.

\begin{case}
Assume $\#S > 2$.
\end{case}
Since $G/G' \iso \integer_4 \times \integer_2$, there exists a $2$-element subset $\{a,b\}$ of~$S$ that generates $G/P$. Since $\{a,b\} \subsetneq S$, and $S$ is minimal, we have $P \not\subset \langle a, b \rangle$. Therefore, by passing to a conjugate, we may assume $\langle a, b \rangle = Q$.

Let $c$~be a third element of~$S$ (so $c \notin Q$). Then $\langle a,b,c\rangle$ properly contains~$Q$. But $Q$ is a maximal subgroup of~$G$ (since $|G/Q| = p$ is prime), so this implies $\langle a,b,c \rangle = G$. So the minimality of~$S$ implies $S = \{a,b,c\}$.

\begin{claim}
We may assume, for each $s \in S$, that either $s^2 \in P$, or $sP \in \Phi(Q) P$, or $s$~acts on~$P$ via an automorphism of order~$4$.
\end{claim}
Suppose there exists $s \in S$ that has none of the three properties.
 Since $\#S > 2$ and $sP \notin \Phi(Q) P$, we know $p \nmid |s|$, so (up to conjugacy) $s \in Q$. 
Then, since $Q/Z(Q) \iso \integer_2 \times \integer_2$, we have $s^2 \in Z(Q)$, so $\langle s^2 \rangle \normal Q$. Also, since $s$~does not act on~$P$ by an automorphism of order~$4$, we know $s^2$ centralizes~$P$. Therefore $s^2 \in Z(G)$, so $\langle s^2 \rangle \normal G$, so \cref{FGL(order2)} and  \cref{QuotientIs4pOr8p} apply.

\begin{subcase}
Assume $Q \iso \integer_4 \ltimes \integer_4 = \langle x \rangle \ltimes \langle y \rangle$.
\end{subcase}
Since some element of~$S$ must generate $G/ ( \langle y \rangle P )$, we may assume $x \in S$ (after replacing $Q$ by a conjugate). That is, we may assume $a = x$.

Observe that $x^2\not\in P$ and $xP\not\in \Phi(Q)P$. Thus, the Claim 
tells us that $x$ acts on $P$ via an automorphism of order~$4$. Hence, $Q/C_Q(P) \iso \integer_4$. Therefore, $C_Q(P)$ is a cyclic normal subgroup of~$Q$ with a cyclic quotient, so we may assume $C_Q(P) = \langle y \rangle$. 

Since $Q$ has no $2$-element generating set that contains an element of order~$2$, we know $|b| > 2$, so, from the Claim, 
we know that $b$ generates $Q/C_Q(P) = Q/\langle y \rangle$, so $b \equiv a^{\pm1} \pmod{\langle y \rangle}$; assume (by replacing $b$ with its inverse if necessary) that $b a \in \langle y \rangle$. Then, since $\langle a,b \rangle = Q$, we must have $\langle ba \rangle = \langle y \rangle \normal G$. Then, since $|y| = 4$, \cref{FGL(gen-twice)} and  \cref{QuotientIs4pOr8p} apply.

\begin{subcase}
Assume $c \notin \Phi(Q)\, P$.
\end{subcase}
Since $Q$ does not centralize~$P$, we may assume $b$~does not centralize~$P$. 
Write $c=wu$ for some $w\in P$ and $u\in Q$.
Since $c\not\in Q$, we have $w\ne e$. Since $b$ does not centralize $P$, we have $(w^{-1})^b \ne w^{-1}$.
Thus 
	$$[b,c]=(u^{-1})^b (w^{-1})^b wu=(u^{-1})^b u ((w^{-1})^b w)^u , $$
where $(u^{-1})^b u\in Q$,
$((w^{-1})^b w)^u\in P$, and $((w^{-1})^b w)^u\ne e$. Hence, $[b,c]\not\in Q'$ and $[b,c]\in G'=Q'\times P$.
Thus the order of $[b,c]$ is a divisor of $|G'|=2p$, but not a divisor of $|Q'|=2$.
Hence, $p$ divides the order of $[b,c]$ and we must have $P\subset \langle [b,c]\rangle$.
If $c\in \{a, ab\}\Phi(Q)P$, then $\langle b,c\rangle=G$ which contradicts the minimality of $S$. Thus $c\in b\, \Phi(Q)P$. (But we may assume $c \notin b^{\pm1} P$, for otherwise \cref{FGL(gen-twice)} and  \cref{QuotientIs4pOr8p} apply.)
Since $c\not\in a\Phi(Q)P$, the argument of the preceding paragraph (by interchanging $a$ and~$b$) implies that $a$ must centralize $P$.
That is, $a \in C_Q(P)$. 
Therefore, the Claim 
implies $|a| = 2$.
In summary, we know:
	\begin{itemize}
	\item $|a| = 2$, and $a$ centralizes~$P$,
	\item $b$ does not centralize~$P$,
	and
	\item $c \in b \,  \Phi(Q)\, P$, but $c \notin b^{\pm1} P$.
	\end{itemize}

\begin{subsubcase}
Assume $Q \iso \integer_2 \ltimes \integer_8 = \langle x \rangle \ltimes \langle y \rangle$.
\end{subsubcase}
Since $|a| = 2$, we may assume $a = x$. Then we must have $|b| = 8$, so we may assume $b = y$. 
Then $\Phi(Q) = \langle b^2 \rangle$, so $c \in \{b^3, b^5\} P$. By replacing $c$ with its inverse, we may assume $c \in b^5 P$. Then
	$$ \text{$S = \{a,b,c\}$ with $a = x$, $b = y$, and $c = y^5 w$, where $w$ generates~$P$}. $$
Also, from the above properties of $a$ and~$b$, we know $x$ centralizes~$P$, but $y$ acts on $P$ via an automorphism of order~$4$.

Consider the hamiltonian cycle
	$$ (b^2, c^{-1}, b^2, c, b^{-1}, a, b^{-7}, a ) $$
in $\Cay(G/P;S)$, which passes through the vertices of the graph in the order:
	$$ P, Py, Py^2, Py^5, Py^6, Py^7, Py^4, Py^3, Pxy^7, Pxy^6, Pxy^5, Pxy^4, Pxy^3, Pxy^2, Pxy, Px, P. $$
Note that, since the action of~$y$ on~$P$ has order~$4$, we know that $y^2$ inverts~$w$, so the endpoint in~$G$ is
	$$ b^2 c^{-1} b^2 c b^{-1} a b^{-7} a
	= y^2 (w^{-1} y^{-5}) y^2 (y^5 w) y^{-1} x y^{-7} x
	= y^2w^{-1}y^2 wy^{-1} xy x=w^2 , $$
which generates $P=\langle w\rangle$.
By \cref{FGL}, we have a hamiltonian cycle in $\Cay(G;S)$.

\begin{subsubcase}
Assume $Q \iso \integer_4 \ltimes (\integer_2  \times \integer_2 ) = \langle x \rangle \ltimes \langle y,z \rangle$.
\end{subsubcase}
Since $|a| = 2$, we may assume $a = y$. Then we may assume $b = x$. Since $\Phi(Q) = \langle b^2, z \rangle$, we must have $c \in \{bz, b^{-1} z \} P$. By replacing $c$ with its inverse, we may assume $c \in b z P$. Then
	$$ \text{$S = \{x, y, x z w\}$, where $w$ generates~$P$}. $$
And $y$ centralizes~$P$, but $x$ acts on $P$ via an automorphism of order~$4$.

Let $k\in \integer_p$, such that $x^{-1}wx=w^k$.
Since the action of $x$ on $P$ has order~$4$, we have $2\le k\le p-2$.
Consider the hamiltonian cycle
	$$ (xzw, x^3, y, x^2, xzw, x^3, xzw, y, x^{-3}) $$
in $\Cay(G/P;S)$ that passes through the vertices of this graph in the order:
	$$ P, Pxz, Px^2z, Px^3z, Pz, Pyz, Pxy, Px^2yz, Px^3yz, Py, Pxyz, Px^2y, Px^3y, Px^3, Px^2, Px, P. $$
The endpoint of this cycle in~$G$ is
\begin{align*}
(xzw)(x^3 yx^3 zw)(x^4 zw)(yx^{-3}) &= (xzw)(x^2 yw)(zw)(xyz) \\
&= x^4 y^2 z^4 w^{-k} w^k w^k = w^k.
\end{align*}
Since $k$ is coprime to $p$ (recall $2\le k\le p-2$), this generates $P=\langle w\rangle$.
By \cref{FGL}, we have a hamiltonian cycle in $\Cay(G;S)$.

\begin{subcase}
Assume $c \in \Phi(Q)\, P$.
\end{subcase}
We may assume $c \notin G'$, for otherwise \cref{CyclicNormal2p} applies.

\begin{subsubcase}
Assume $Q \iso \integer_2 \ltimes \integer_8 = \langle x \rangle \ltimes \langle y \rangle$.
\end{subsubcase}
Up to automorphism, any $2$-element generating set of~$Q$ is of the form $\{x y^i, y\}$. 
Since $xy^4$ has order~$2$, it may be replaced with~$x$ (if $i \in \{3,4,5\}$). This implies that we may assume $-2 \le i \le 2$.  Then, since we may replace $y$ with~$y^{-1}$, we may assume $0 \le i \le 2$. However, $xy^2$ has order~$4$, but its square is in~$Q'$, so it cannot act on~$P$ by an automorphism of order~$4$; therefore, the Claim %
implies it is not in~$S$. So $i \in \{0,1\}$.

Since $\Phi(Q) = \langle y^2 \rangle$ and $Q' = \langle y^4 \rangle$, we must have $c \in \{y^2, y^6\}P$; replacing $c$ with~$c^{-1}$, we may assume $c \in y^2 P$.  Thus, either
	$$ \text{$S = \{x, y, y^2 w\}$ \quad or \quad $S = \{xy, y, y^2 w\}$, \quad where $\langle w \rangle = P$} .$$
Also:
	\begin{itemize}
	\item $x$ either centralizes~$P$ or inverts it,
	and
	\item $y$ acts on~$P$ by an automorphism of order~$4$.
	\end{itemize}

Let $\epsilon\in\{\pm 1 \}$, such that
$x^{-1} wx=w^{\epsilon}$, and let $k\in \integer_p$, such that $y^{-1}wy=w^k$.
Since the action of $y$ on $P$ has order~$4$, we have $2\le k\le p-2$.
Let $N=\langle y^4, P\rangle=\langle y^4 w\rangle\iso \integer_{2p}$.

For the first generating set,
	$$ (y^2w, y^{-1}, y^2w, x, y^{-3}, x) $$
is a hamiltonian cycle
in $\Cay(G/N;S)$ that passes through the vertices of this graph in the order:
	$$ N, Ny^2, Ny, Ny^3, Nxy^3, Nxy^2, Nxy, Nx, N. $$
The endpoint of this cycle in~$G$ is
\begin{align*}
y^2 w yw xy^{-3}x =y^2wywy= y^2 w y^2 w^k = y^4 w^{k-1}.
\end{align*}
Since $k-1$ is coprime to~$p$ (recall $2\le k\le p-2$), and $y^4$ has order 2, this generates $N=\langle y^4 w\rangle$.
So \cref{FGL} provides a hamiltonian cycle in $\Cay(G;S)$.

For the second generating set,
	$$ \bigl( xy, y^{-1}, y^2w, y, (xy)^{-1}, y^{-1}, y^{2}w, y \bigr) $$
is a hamiltonian cycle
in $\Cay(G/N;S)$ that passes through the vertices of this graph in the order:
	$$ N, Nxy, Nx, Nxy^2, Nxy^3, Ny^2, Ny, Ny^3, N. $$
The endpoint of this cycle in $G$ is
\begin{align*}
xy^2 w x^{-1} ywy =xy^2 x^{-1} w^{\epsilon} y^2 w^k = y^4 w^{k-\epsilon}
\end{align*}
(with $\epsilon\in\{\pm 1\}$ depending on whether $x$ centralizes  or inverts~$P$.)
Since $k-1$  and $k+1$ are coprime to~$p$ (recall $2\le k\le p-2$) and $y^4$ has order~$2$, this generates $N=\langle y^4 w\rangle$.
So \cref{FGL} provides a hamiltonian cycle in $\Cay(G;S)$.

\begin{subsubcase}
Assume $Q \iso \integer_4 \ltimes (\integer_2  \times \integer_2 )$.
\end{subsubcase}
Up to automorphism, any $2$-element generating set of~$Q$ is of the form $\{x, x^i y\}$. Of course, by replacing $x$ with~$x^{-1}$, we may assume $0 \le i \le 2$. Also, since $x^2 y$ has order~$2$, it may be replaced with~$y$; so we may assume $i \in \{0,1\}$. 
 
Since $\Phi(Q) = \langle x^2 , z \rangle$ and $Q' = \langle z \rangle$, we must have $c \in \{x^2, x^2 z\}P$.  Thus, letting $w$ be a generator of~$P$, either:
	$$ \text{$S = \{x, y, x^2 w\}$ 
	\  or \  $S = \{x, xy, x^2 w\}$,
	\  or \  $S = \{x, y, x^2 z w\}$,
	\  or \  $S = \{x, xy, x^2 z w\}$%
	} . $$
Also:
	\begin{itemize}
	\item $x$ acts on~$P$ by an automorphism of order~$4$ (so $x^2$ inverts~$P$),
	\item $y$ either centralizes~$P$ or inverts it,
	and
	\item $z$ centralizes~$P$.
	\end{itemize}

Let $k\in \integer_p$, such that $x^{-1}wx=w^k$.
Since the action of $x$ on $P$ has order~$4$, we have $2\le k\le p-2$.
Let $N=\langle z, P\rangle$. Since $z$ centralizes $P$, the order of $z$ is~$2$, and the order of $w$ is~$p$, we have $N=\langle zw\rangle \iso \integer_{2p}$.

For the first generating set,
	$$ \bigl( x^2w, x^{-1}, x^2w, y, x^{-3},y \bigr) $$
is a hamiltonian cycle in $\Cay(G/N;S)$ that passes through the vertices of this graph in the order:
	$$ N, Nx^2, Nx, Nx^3, Nx^3y, Nx^2y, Nxy, Ny, N. $$
Similarly, replacing each instance of $x^2w$ with $x^2zw$ yields a hamiltonian cycle in $\Cay(G/N;S)$
for the third generating set
that passes through the vertices of this graph in the same order as the hamiltonian cycle for the first generating set.
Since $z$ is in the center of $G$ and $z$ appears exactly twice in the list of edges, the endpoints of these cycles in $G$ are the same and they are given by
	$$ x^2wxwyxy=x^2 wxwxz= x^2wx^2 w^k z=x^4 w^{k-1}z =w^{k-1}z. $$
Since $k-1$ is coprime to~$p$, and $z$~has order 2,
this generates $N=\langle wz\rangle$.
So \cref{FGL} provides a hamiltonian cycle in $\Cay(G;S)$.

For the second generating set,
	$$ \bigl( x^2w, x, x^2w, xy, x^3, (xy)^{-1} \bigr)$$
is a hamiltonian cycle in $\Cay(G/N;S)$ that passes through the vertices of this graph in the order:
	$$N, Nx^2, Nx^3, Nx, Nx^2y, Nx^3y, Ny, Nxy, N. $$
Similarly, replacing each instance of $x^2w$ with $x^2zw$ yields a hamiltonian cycle in $\Cay(G/N;S)$ for the fourth
generating set that passes through the vertices in the same order as the hamiltonian cycle for the second generating set.
Since $z$ is in the center of $G$ and $z$ appears exactly twice in the list of edges, the endpoints of each cycle are the same and is
given by
	$$x^2wx^3wxyx^3y^{-1}x^{-1}= x^2wx^3 w xyx^2yz = x^2wx^3wx^3z = x^2wx^2 w^{-k}z= w^{-(k+1)}z. $$
Since $z$ has order 2 and $-(k+1)$ is coprime to $p$ (recall $2\le k\le p-2$), this generates $N=\langle wz\rangle$.
So \cref{FGL} provides a hamiltonian cycle in $\Cay(G;S)$.
\end{proof}

\section{\texorpdfstring{The case where $G/G' \iso \integer_2 \times \integer_2 \times \integer_2$}{The case where G/G' = Z2 x Z2 x Z2}} \label{Z2xZ2xZ2Sect}

\begin{prop}
Assume $|G| = 16p$.
If $G/G' \iso \integer_2 \times \integer_2 \times \integer_2$, then $\Cay(G;S)$ has a hamiltonian cycle.
\end{prop}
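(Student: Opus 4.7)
The preliminaries constrain $G$ severely: $Q$ is one of the three groups listed in \fullcref{ListOrder16}{Z2xZ2xZ2}, each having $|Q'|=2$, so $G'=Q'\times P$ is cyclic of order~$2p$. Since $\mathrm{Aut}(P)\iso\integer_{p-1}$ is cyclic while $Q/Q'$ is not, the subgroup $C_Q(P)$ has index~$2$ in~$Q$. In each candidate for~$Q$, $Q'$ is central in~$Q$; and since $Q'$ centralizes~$P$, we have $Q'\subseteq Z(G)$. A quick check shows $\Phi(G)=Q'$ (the maximal subgroups of~$G$ are $Q$ itself and the seven subgroups of the form $MP$ with $M\le Q$ maximal; they intersect in~$Q'$), so the minimum number of generators of~$G$ equals that of $G/Q'\iso(\integer_2^3)\ltimes\integer_p$, namely~$3$. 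In particular $\#S\ge 3$. I would split into cases by the size of~$S$.

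For the case $\#S=3$, write $S=\{a,b,c\}$; the images form a basis of $G/G'\iso\integer_2^3$, so $\Cay(G/G';S)$ is the $3$-cube. The plan is to try a standard hamiltonian cycle such as $(a,b,a,c,a,b,a,c)$ and compute its endpoint in~$G$, which lies in $G'\iso\integer_{2p}$ and can be expressed in terms of the squares $a^2,b^2,c^2$ and the commutators $[a,b],[a,c],[b,c]$, using $Q'\subseteq Z(G)$ to simplify. If the endpoint generates~$G'$, \cref{FGL} finishes it. The only obstructions are that the endpoint lies in~$Q'$, in~$P$, or is trivial; in each such case I would either try a different hamiltonian cycle of the $3$-cube, apply \cref{FGL(order2)} with $N=Q'$ when some $s\in S$ satisfies $s^2\in Q'\smallsetminus\{e\}$, apply \cref{FGL(gen-twice)} with $N=P$ when some $s,t\in S\cup S^{-1}$ have $\langle s^{-1}t\rangle=P$, or invoke \cref{CxLHamCyc} with a suitable abelian subgroup of~$G$ that contains~$P$; each time relying on \cref{QuotientIs4pOr8p} for the required quotient hamiltonian cycle or path.

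For the case $\#S\ge 4$ (which really does occur, for instance when $Q\iso Q_8\times\integer_2$ with $C_Q(P)=Q_8$), by minimality and the fact that $d(G)=3$, the images of~$S$ in~$G/G'$ are linearly dependent. Writing out the dependency yields an expression in elements of~$S$ that lies in~$G'\iso\integer_{2p}$, and suitable powers of this expression generate either $Q'$, $P$, or all of~$G'$. Depending on which, one of \cref{FGL(gen-twice)}, \cref{FGL(order2)}, \cref{NormalEasy}, or \cref{CyclicNormal2p} applies (noting that $G'$ itself is cyclic of order~$2p$ with central $Q'$-part and non-central $P$-part, so \cref{CyclicNormal2p} covers the case where the element generates all of~$G'$), using quotient hamiltonian structure from~\pref{kp} via~\cref{QuotientIs4pOr8p}.

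I expect the main obstacle to be the $\#S=3$ subcase with $Q\iso Q_8\times\integer_2$, because every non-identity square in $Q_8$ equals its unique central involution~$Q'$, so the endpoint of a natural hamiltonian cycle tends to fall in~$Q'$ rather than generating~$G'$. In that regime one must either mix applications of \cref{FGL} for different normal subgroups, or invoke \cref{CxLHamCyc} with the abelian subgroup $\langle i\rangle\times\langle z\rangle\times P$ of~$G$ (where $z$ generates the $\integer_2$ factor of~$Q$ and $i$ generates a cyclic subgroup of~$Q_8$). Enumerating minimal generating sets of~$G$ up to automorphism for each of the three possibilities for~$Q$, and verifying in each that at least one construction goes through, will constitute the bulk of the proof.
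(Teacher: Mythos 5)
Your proposal is a strategy outline rather than a proof, and the gap is precisely that all of the substantive work is deferred to ``try a different cycle, or apply one of these lemmas'' without verifying that the menu of lemmas actually covers every obstruction. It does not. In the $\#S=3$ case, the first essential step (which you omit) is the reduction to the situation where \emph{every} element of~$S$ has order~$2$: if $|a|=2p$ one uses \cref{FGL(order2)}, but if $|a|=4p$ one needs the observation that $\langle a\rangle\normal G$ and that among the four endpoints $a^{\pm1}g,\ a^{\pm3}g$ obtained by flipping the signs of the $a$-edges in $(a,b,a,c,a,b,a,c)$, one must generate $G'\iso\integer_{2p}$ --- none of your listed fallbacks produces this. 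After that reduction your worry about $Q_8\times\integer_2$ evaporates, since a group generated by involutions cannot be $Q_8\times\integer_2$; the genuinely hard subcases are instead $D_8\times\integer_2$ and $\integer_2\ltimes(\integer_2\times\integer_4)$ with all generators of order~$2$, where the endpoint analysis forces constraints such as $p=3$ and the proof requires explicit hamiltonian cycles in quotients by cyclic normal subgroups of order~$4$ (namely $\langle xy\rangle$ and $\langle tz\rangle$), realized as prisms over $\Cay(D_{12};\{R,F\})$, with hand computation of the endpoints. These quotients and cycles are not among the tools you propose, and \cref{FGL(order2)}, \cref{FGL(gen-twice)} and \cref{CxLHamCyc} do not dispose of those generating sets.

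The $\#S\ge4$ plan has a more basic flaw: a linear dependency among the images of~$S$ in $G/G'$ yields a word lying in~$G'$, but that word is not the endpoint of a hamiltonian cycle in any quotient, so neither \cref{FGL} nor its corollaries apply to it; and \cref{CyclicNormal2p} requires a \emph{single generator} $s\in S$ with $\langle s\rangle\normal G$, not a product of generators. The actual argument must first pin down the fourth generator: one shows $\#S=4$, that (after discarding cases handled by \cref{CxLHamCyc} and \cref{FGL(gen-twice)}) the extra generator has the form $s=c\gamma$ with $\langle\gamma\rangle=G'$, and then compares the endpoints $g$, $g\gamma$, $g\gamma^2$ of three hamiltonian cycles of the cube that differ by substituting $s$ for~$c$, noting that one of $m+1,m+2$ is coprime to $2p$ when $p\mid m$. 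Since none of these ideas appear in your proposal, it cannot be completed as written.
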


\begin{proof} We proceed via case-by-case analysis.

\setcounter{case}{0}

\begin{case}
Assume $\#S = 3$.
\end{case}
Write $S = \{a,b,c\}$. Since $G/G' \iso \integer_2 \times \integer_2 \times \integer_2$, it is easy to see that the sequence
	\begin{align*}
	(a, b, a, c, a, b, a, c)
	\end{align*}
is a hamiltonian cycle in $\Cay(G/G'; S)$. Also, since every nontrivial element of $G/G'$ has order~$2$, we know $s^{-1} \equiv s \pmod{G'}$, for every $s \in S$, so, for any choice of $i_1,\ldots,i_8 \in \{\pm1\}$,
	\begin{align} \label{abacabacCycle}
	\text{$(a^{i_1}, b^{i_2}, a^{i_3}, c^{i_4}, a^{i_5}, b^{i_6}, a^{i_7}, c^{i_8} )$
	is a hamiltonian cycle in $\Cay(G/G'; S)$.}
	\end{align}

Now:
	\begin{itemize}
	\item If $|a| = 2p$, then $a$~has order~$2$ in $G/P$, but not in~$G$, so \cref{FGL(order2)} and  \cref{QuotientIs4pOr8p} apply.
	\item If $|a| = 4p$, then $a^2$ generates~$G'$. 
	Since $\langle a\rangle/G'$ is normal in $G/G'\iso (\integer_2)^3$, we have $\langle a \rangle\normal G$.
Choose $\beta,\gamma\in\{\pm 1\}$ such that 
		$$ \text{$x^b = x^\beta$ and $x^c = x^\gamma$, for all $x\in\langle a\rangle$.} $$
	Then, letting $i_k = 1$ for $k \notin \{1,3,5\}$, the  endpoint of the path \pref{abacabacCycle} in~$G$ is 
		$$
		\text{$a^{i_1} b a^{i_3} c a^{i_5} b a c
		= a^{i_1} a^{\beta i_3} a^{\beta\gamma i_5} \, g \in G' = \langle a^2 \rangle$,
		\ where $g = bcbac$}
		.$$
	Since each of $i_1,i_3,i_5$ can be $\pm1$ independently, the endpoints that can be obtained in this way are: 
		$$ a^{-3} g, \ a^{-1} g, \ a g, \ a^3 g  .$$
Since  $\langle a^2 \rangle \iso \integer_{2p}$, and at least one of any $4$ consecutive integers is relatively prime to~$2p$, it must be the case that at least one of these endpoints generates $\langle a^2 \rangle = G'$. So \cref{FGL} applies.
	\end{itemize}
Thus, we may assume no element of~$S$ has order divisible by~$p$.
Therefore $s^2 \in Q'$ for every $s \in S$. So we may assume
	$$ \text{every element of~$S$ has order~$2$} $$
for otherwise \cref{FGL(order2)} and  \cref{QuotientIs4pOr8p} apply.

Let 
	$$ g' = a b a c a b a c 
	= [a,b] \, b c \, [a,b] \, b c $$
 be the endpoint of the path \pref{abacabacCycle}  in~$G$.

\begin{obs} For future reference, we note:
	\begin{enumerate}
	\item Since $Q_8$ is not generated by elements of order~$2$, we know $Q$ is not the group described in \pref{ListOrder16-Z2xZ2xZ2-Q8xZ2} of \cref{ListOrder16}.
	\item Suppose $Q$ is the group described in \pref{ListOrder16-Z2xZ2xZ2-Z2x(Z2xZ4)} of \cref{ListOrder16}. 
Let $\overline S$ be the image of~$S$ in~$Q$. It is not difficult to see that $xyz$ is the only element of order~$2$ that is of the form $x^i y^j z$. Thus, we must have $x y z^{\pm1} \in S$, and the other two elements of~$S$ must be in $\langle x, y \rangle$. Since all elements of~$S$ have order~$2$ (and $|xy| = 4$), we conclude that $S$ is of the form
	$$ \overline{S} = \{\, xz^{2i}, yz^{2j}, xyz^{\pm1} \,\} .$$
Up to automorphism (replacing $x$ with $x z^{2i}$, replacing $y$ with $y z^{2j}$, and, if necessary, replacing $z$ with~$z^{-1}$), we have
	\begin{align} \label{S=xz,yz,xyz}
	\text{$\overline{S} = \{\, x, y, xyz \,\}$
	\qquad (if $Q = \integer_2 \ltimes (\integer_2 \times \integer_4)$)}
	. \end{align}
	\end{enumerate}
\end{obs}

\begin{subcase}
Assume $[b,c]$ generates $G'$.
\end{subcase}
Since $b$ and $c$ both have order 2, they generate a dihedral group. Since $G'=\langle (bc)^2\rangle$ has order $2p$, we know
$\langle bc\rangle$ has order $4p$ and $\langle b,c\rangle\iso D_{8p}$.
Thus $b$ and $c$ both invert $\langle bc\rangle$ and $G'$.

Therefore $bc$ centralizes~$G'$, so 
	$g' = [a,b]^2 [b,c]$. 
So \cref{FGL} applies unless $P \subset \langle [a,b] \rangle$ (which implies that $a$ inverts~$P$), and 
	\begin{align} \label{bcbc=baba2}
	 [b,c] \equiv [a,b]^{-2} \pmod{Q'}
	 . \end{align}
Replacing $Q$ by a conjugate, we may assume $b \in Q$. Write $a = \overline a w$ and $c = \overline c w'$ with $\overline a , \overline c \in Q$ and $w,w' \in P$.
Since $a$ and $c$ both invert~$P$, we know $\overline{a}$ and $\overline{c}$ both invert~$P$.

We have
	$$ [b,c] = (bc)^2 = (b \overline c w')^2 =  (b \overline c)^2 (w')^2$$
and
	$$ [a,b] = (ab)^2 = (\overline a w b)^2 = (\overline a b)^2 w^{-2} 
	\text{\ (since $\overline a$ and~$b$ invert~$P$)}, $$
so \pref{bcbc=baba2} tells us that 
	\begin{align} \label{w'=w2}
	w' = w^2
	. \end{align}

\begin{subsubcase}
Assume $[a,b]$ generates $G'$.
\end{subsubcase}
We may interchange $a$ and~$c$, so the preceding calculations tell us that $w = (w')^2 = (w^2)^2 = w^4$, so we must have $p = 3$.

\begin{subsubsubcase}
Assume $Q$ is as described in \fullcref{ListOrder16}{Z2xZ2xZ2-Z2x(Z2xZ4)}.
\end{subsubsubcase}
From \pref{S=xz,yz,xyz} and \pref{w'=w2}, we see that
	$$ \text{$S = \{xw, y, xyzw^{-1}\}$, \quad where $P = \langle w \rangle$} .$$

Let $N=\langle xy \rangle =\{e,xy,z^2,\allowbreak xyz^2\}$ be the cyclic group of order 4 generated by $xy$.
Observe that $N\normal G$ and that $\Cay(G/N;S)$ is graph
isomorphic to $\Cay(D_{12};\{R,F\})$, where $\{R,F\}$ is the natural generating set for $D_{12}$,
under the vertex identification $\phi:G/N\rightarrow D_{12}$ given by 
	\begin{align*}
	\phi(N(xwy)^k ) &=R^{2k}, \\
	\phi(N(xwy)^k (xw) )&=R^{2k+1}, \\
	\phi(N(xwy)^k(xyzw^{-1}) )&=R^{2k}F, 
	\text{ and} \\
	\phi(N(xwy)^k (xw)(xyzw^{-1}) )&=R^{2k+1}F,
	\end{align*} for
any integer~$k$.
The natural hamiltonian cycle $(R^5,F)^2$ in $\Cay(D_{12};\{R,F\})$ corresponds to the hamiltonian
cycle 
	$$((xw,y)^3\#,xyzw^{-1},(y,xw)^3\#,xyzw^{-1})$$
in $\Cay(G/N;S)$.
The endpoint in $G$ is
\begin{align*}
(xwy)^{3}(y^{-1})(xyzw^{-1}) (yxw)^{3}(xw)^{-1}(xyzw^{-1})  &= (xyz^2)(y)(xyzw^{-1})(xy)(w^{-1}x)(xyzw^{-1}) \\
&= (xyz^2)(xz^{-1}w^{-1})(xy)(yzw) \\
&= (yz^{-1}w^{-1})(xzw)  =xyz^2 =(xy)^{-1},
\end{align*}
which generates $\langle xy \rangle =N$.
Thus, \cref{FGL} provides a hamiltonian cycle in
$\Cay(G;S)$.

\begin{subsubsubcase}
Assume $Q = D_8 \times \integer_2 = \langle f, t \rangle \times \integer_2$.
\end{subsubsubcase}
Since $[\overline a,b]$ is nontrivial, we may assume $\overline a = f$ and $b = ft$. 
Because $\langle b,c\rangle$ is a dihedral group and $S$ is minimal, we must have $\overline{c}=ft^{i}z$, for some integer $i$.
Since $[b,\overline{c}]=(b\overline{c})^2=(ftft^{i} z)^2=t^{2i-2}$ is nontrivial, $i$ must be even.
We may replace $z$ with $t^2 z$ since the order of $t^2 z$ is 2 and $t^2 z\in Z(Q)$.
Thus we may assume $\overline{c}=fz$. Then, from \pref{w'=w2}, we see that 
	$$ S = \{f w, ft, f z w^{-1}\} ,$$
 where $f$ inverts $w$, whereas $t$ and~$z$ centralize~$w$ (and $w$ is a generator of~$P$).

Let $N=\langle tz \rangle =\{e,tz,t^2,t^3 z\}$ be the cyclic group of order 4 generated by $tz$.
Observe that $N\normal G$ and that $\Cay(G/N;S)$ is graph
isomorphic to $\Cay(D_{12};\{R,F\})$, where $\{R,F\}$ is the natural generating set for $D_{12}$,
under the vertex identification $\phi \colon G/N\to D_{12}$ given by 
	\begin{align*}
	\phi(N((ft)(fzw^{-1}))^k )&=R^{2k}, \\
	\phi(N((ft)(fzw^{-1}))^k (ft) )&=R^{2k+1},\\
	\phi(N((ft)(fzw^{-1}))^k(fw) )&=R^{2k}F,
	\text{ and} \\
	\phi(N((ft) (fzw^{-1}))^k (ft)(fw) )&=R^{2k+1}F,
	\end{align*}
for any integer~$k$.
The natural hamiltonian cycle $(R^5,F)^2$ in $\Cay(D_{12}; \{R,F\})$ corresponds to the hamiltonian
cycle 
	$$((ft,fzw^{-1})^3\#,fw,(fzw^{-1},ft)^3\#, fw)$$
in $\Cay(G/N;S)$.
The endpoint in $G$ is
\begin{align*}
(ftfzw^{-1})^{3}(fzw^{-1})^{-1}(fw) (fzw^{-1}ft)^{3}(ft)^{-1}(fw)  &= (tz)(wz^{-1}f^{-1})(fw) (t^3 z)(t^{-1}f^{-1})(fw) \\
&= (tz)(wz^{-1}w)(t^{3}z)(t^{-1}w) \\
&= (tw^2)(t^2 zw)  =t^3 z =(tz)^{-1},
\end{align*}
which generates $\langle tz \rangle =N$.
Thus, \cref{FGL} provides a hamiltonian cycle in
$\Cay(G;S)$.

\begin{subsubcase}
Assume $[a,b]$ does not generate $G'$.
\end{subsubcase}
Because we could interchange $b$ and~$c$, we may assume $[a,c]$ also does not generate $G'$. Since 
	$$[a,c] = (ac)^2 = (\overline a w \, \overline c w^2)^2 = (\overline a \, \overline c w)^2 = (\overline a \, \overline c)^2 w^2 ,$$
and $w^2$ generates~$P$, this implies that $\overline a$ commutes with~$\overline c$. By the same argument, $\overline a$ commutes with~$b$. So $\overline a$ is in the center of~$Q$. Therefore $Q = \langle b, \overline c \rangle \times \langle \overline a \rangle$. Looking at the list of groups in \fullcref{ListOrder16}{Z2xZ2xZ2} (and recalling that $\overline a$, $b$, and~$\overline c$ all have order~$2$), we conclude that $Q = D_8 \times \integer_2$. Furthermore, we have $\overline a = z$, and $\langle b, \overline c \rangle = D_8$, so we may assume
	$$S = \{ z w, f, ftw^2 \} $$
where $f$ and~$z$ invert~$w$, and $t$~centralizes~$w$ (and $w$ is the generator of~$P$).

Let $N=Q'=\langle t^2\rangle=\{e,t^2\}$. Observe that the graph $\Cay(G/N;S)$ is isomorphic to $\Cay(\integer_{2}\ltimes\integer_{4p};\{X,Y\})$, where $\{X,Y\}$ is the natural generating set for
$\integer_{2}\ltimes\integer_{4p}$ given by 
	$$\integer_{2}\ltimes\integer_{4p}=\langle X,Y| X^{2}=Y^{4p}=1, X^{-1}YX=Y^{2p-1}\rangle .$$
This graph isomorphism is given by the vertex identification 
$\phi \colon G/N\to \integer_{2}\ltimes\integer_{4p}$ where
	\begin{align*}
	\phi(N(fftw^2)^k ) & =Y^{2k} , \\
	\phi(N(fftw^2)^k f )&=Y^{2k+1} , \\
	\phi(N(fftw^2)^k(zw) )&=Y^{2k}X,
	\text{ and} \\
	\phi(N(fftw^2)^k(f)(zw) ) &=Y^{2k+1}X, 
	\end{align*}
for any integer $k$.
The hamiltonian cycle $(Y^{2p-1},X)^4$ in $\Cay(\integer_{2}\ltimes\integer_{4p};\{X,Y\})$ corresponds to the hamiltonian
cycle 
	$$((f,ftw^{2})^p\#,zw,(ftw^{2},f)^p\#, zw)^2$$
in $\Cay(G/N;S)$.
The endpoint in $G$ is
\begin{align*}
\bigl( (fftw^{2})^{p}(ftw^{2})^{-1}(zw) (ftw^{2}f)^{p}(f)^{-1}(zw) \bigr)^2  &= 
\bigl( (t^p)(w^{-2}t^{-1}f^{-1})(zw) (t^{-p})(f^{-1})(zw) \bigr)^2 \\
&= \bigl( (ft^{-p+1}w^{2})(zw)(ft^{p})(zw) \bigr)^2 \\
&= (t^{2p-1})^2  =t^2,
\end{align*}
which generates $\langle t^2 \rangle =N$.
Thus, \cref{FGL} provides a hamiltonian cycle in
$\Cay(G;S)$.


\begin{subcase}
Assume there do not exist $s,t \in S$, such that $\langle [s,t] \rangle = G'$.
\end{subcase}
Then, since 
	$$ \langle [a,b], [a,c], [b,c] \rangle = G' = Q' \times P ,$$
we may assume $\langle [a,b] \rangle = P$ and $\langle [b,c] \rangle = Q'$. 

Furthermore, we may assume $bc$ inverts~$G'$, for otherwise it centralizes~$G'$, in which case, $g' = [a,b]^2 [b,c]$, so $g'$ generates $G'$, so \cref{FGL} applies.

\begin{subsubcase}
Assume $Q = D_8 \times \integer_2$.
\end{subsubcase}
Since $\langle b,c\rangle$ is dihedral and $Q'=\langle [b,c]\rangle=\langle (bc)^2\rangle$ has order 2, it must be the case that $ab$ has order~$4$ and
$\langle b,c\rangle\iso D_{8}$. So we may assume $b=f$ and $c=ft$.

Write $a = \overline a w$ with $\overline a \in Q$ and $w \in P$. Since $a$~and~$b$ have order~$2$, we know they generate a dihedral group, so $\overline a$ and~$b$ both invert~$P$, but $c$~centralizes~$P$ (since $bc$ inverts~$G'$).

Since $[a,b] \in P$ projects trivially into~$Q$, we know $\overline a$ commutes with~$b = f$, so $\overline a \in \langle f, t^2 \rangle z$. However, since $t^2 z$ is in the center of~$Q$, there is no harm in replacing $z$ with~$t^2 z$, so we may assume $\overline a \in \langle f \rangle z$. Thus, there are only two generating sets to consider:
$$ \text{$S = \{zw, f, ft \}$ or $S = \{fzw, f, ft\}$ .}$$
In each case, assume the first two generators invert~$P$, and the third generator centralizes~$P$. (Thus, in both cases, $f$~and~$t$~invert~$P$. However, $z$~inverts~$P$ in the first case, but $z$~centralizes~$P$ in the second case.)

The cycle $\bigl( (ft, f)^4 \#, a\bigr)^2$ in each of these generating sets (so in the first, $a=zw$, and in the second, $a=fzw$), is a hamiltonian cycle in $\Cay(G/P;S)$, and its endpoint in $G$ is $\bigl((ftf)^4f^{-1}a\bigr)^2=(fa)^2$.   In the first case, the endpoint $(fa)^2$ is $(fzw)^2=f^2z^2w^2=w^2$, while in the second case, it is $(ffzw)^2=(zw)^2=w^2$.  Since $w^2$ certainly generates~$P$, \cref{FGL} provides a hamiltonian cycle in $\Cay(G;S)$.

\begin{subsubcase}
Assume $Q \neq D_8 \times \integer_2$.
\end{subsubcase}
We will show that this case cannot occur.
From \pref{S=xz,yz,xyz}, it is easy to see that no two elements of~$\overline{S}$ commute. So the image of $[a,b]$ in~$Q$ is nontrivial, which contradicts the fact that $\langle [a,b] \rangle = P$.

\begin{case}
Assume $\#S >3$.
\end{case}
Since $G/ P \iso Q$ is a $3$-generated $2$-group, there is a $3$-element subset $\{a,b,c\}$ of~$S$ that generates $G/P$. The minimality of~$S$ implies $\langle a,b,c \rangle \neq G$, so $|\langle a,b,c \rangle| = 16$. Thus, we may assume $\langle a,b,c \rangle = Q$.

Since $a$, $b$, and~$c$ all have order~$2$ in $Q/Q'$, they also have order~$2$ in $G/Q'$. So we may assume
	$$ \text{$a$, $b$, and~$c$ all have order~$2$} $$
for otherwise \cref{FGL(order2)}  and  \cref{QuotientIs4pOr8p} apply.

Let $s$ be the 4th element of~$S$. We may assume $s \notin G'$, for otherwise \cref{CxLHamCyc} and  \cref{QuotientIs4pOr8p} apply with $X = \{s\}$.

Let $s=wq$ where $w$ generates $P$ and $q\in Q$. If $c$ centralizes $P$, we have
$$
[c,s]=c^{-1}(wq)^{-1}cq = (q^{-1})^c (w^{-1})^c wq= (q^{-1})^c w^{-1}wq =(q^{-1})^c q=[c,q]\in Q'.
$$
Since 
	$$G'=Q'\times P = \langle [a,b], [a,c], [b,c], [a,s], [b,s], [c,s]\rangle$$
 and $Q'=\langle [a,b], [a,c], [b,c]\rangle$, we may assume $P\subset \langle [c,s]\rangle$.
Thus $[c,s]\not\in Q'$ and $c$ inverts $P$. Therefore, $P\subset \langle [c,s]\rangle \subset \langle c,s\rangle$.

We claim that $s \in c G'$. If not, then the image of $\langle c,s\rangle$ in $G/G'$ has order~$4$, so we may assume $\{ a,c,s\}$ generates $G/G'\iso Q/Q'=Q/\Phi(Q)$.
But this implies that $\{ a,c,s\}$ generates $G/P\iso Q$. Since we also know that $P \subset \langle c,s \rangle \subset \langle a, c,s \rangle$, we conclude that $\langle a, c, s \rangle = G$. This contradicts the minimality of~$S$.

We may assume $s \notin cP$, for otherwise \cref{FGL(gen-twice)}  and \cref{QuotientIs4pOr8p} apply. 
Let $u$ be a generator of $Q'\iso\integer_2$. Then $s\in c\, G' = c\, Q'P =\{c,cu\}P$.
Since $s\not\in c\, P$ and $s\not\in Q$, we have $s=cuw$ for some generator~$w$ of $P\iso\integer_p$.
Let $\gamma=uw$. Then $s=c\gamma$ and
	$$\langle  \gamma\rangle  =\langle uw\rangle = \langle u\rangle \langle w\rangle= Q'P =  G' .$$
Since $[c,s]$ generates~$P$, we see that $c$ inverts~$P$. 

We claim that both $a$ and $b$ centralize $P$. If not, we may assume $a$ inverts $P$. Then $P\subset \langle [a,s]\rangle\subset \langle a,s\rangle$.
Since $\{ a,b,s\}$ and $\{ a,b,c\}$ have the same image in $G/G'\iso Q/Q'=Q/\Phi(Q)$, we know $\{ a,b,s\}$ generates $G/G'$.
This implies that $\{ a,b,s\}$ generates $G/P\iso Q$. Hence, $\{a,b,s\}$ generates $G$, contradicting the minimality of $S$.

Now, since $G/G' \iso \integer_2 \times \integer_2 \times \integer_2$ (and $s \equiv c \pmod{G'}$), it is easy to see that all three of the following sequences are hamiltonian cycles in $\Cay(G/G';S)$:
	$$ (a,c,b,c,a,c,b,c), \  (a,c,b,c,a,c,b,s), \  (a,c,b,s,a,c,b,s) .$$
Let $g = acbcacbc$ be the endpoint (in~$G$) if the first cycle. Then the endpoints of the other two cycles are:
	$$ \text{$g c^{-1} s = g \, \gamma$ \ and \ $acbsacbs = acb(c\gamma)acb(c\gamma) = g\, \gamma^2$} .$$
Now $g \in Q' = \langle \gamma^p \rangle$, and $|\gamma| = 2p$. Now, it is easy to see that if $m$ is a multiple of~$p$, then either $m+1$ or $m+2$ is relatively prime to $2p$. Therefore, either $g\, \gamma$ or $g \, \gamma^2$ generates $\langle \gamma \rangle = G'$, so \cref{FGL} applies.
\end{proof}

\end{document}